\DeclarePairedDelimiter{\floor}{\lfloor}{\rfloor}
\newcommand{\N}{\mathds{N}}
\newcommand{\Z}{\mathds{Z}}
\newcommand{\R}{\mathds{R}}
\newcommand{\C}{\mathds{C}}
\newcommand{\Ca}{\mathds C\mathrm{a}}
\newcommand{\G}{\mathsf{G}}
\newcommand{\U}{\mathsf{U}}
\newcommand{\SO}{\mathsf{SO}}
\newcommand{\SL}{\mathsf{SL}}
\newcommand{\Spin}{\mathsf{Spin}}
\newcommand{\Cas}{\operatorname{Cas}}
\newcommand{\Sym}{\operatorname{Sym}}
\newcommand{\dd}{\mathrm d}
\newcommand{\g}{\mathrm g}
\newcommand{\diag}{\mathrm{diag}}
\newcommand{\sonc}{\mathfrak{so}(n,\C)}
\newcommand{\Ss}{\mathds{S}}
\newcommand{\Fr}{\mathrm{Fr}}
\newcommand{\Ric}{\operatorname{Ric}}
\newcommand{\scal}{\operatorname{scal}}
\newtheorem{theorem}{Theorem}[]
\newtheorem{lemma}[theorem]{Lemma}
\newtheorem{proposition}[theorem]{Proposition}
\newtheorem{corollary}[theorem]{Corollary}
\newtheorem{mainthm}{\sc Theorem}
\theoremstyle{definition}
\newtheorem{definition}[theorem]{Definition}
\theoremstyle{remark}
\newtheorem{remark}[theorem]{Remark}
\newtheorem{example}[theorem]{Example}
\numberwithin{equation}{section}
\title{Curvature operators and rational cobordism}
\subjclass{53C21, 53C27, 57R75}
\author[R. G. Bettiol]{Renato G. Bettiol}
\address{\!\!\!\begin{tabular}{lll}
CUNY Lehman College & & CUNY Graduate Center \\
Department of Mathematics & & Department of Mathematics \\
250 Bedford Park~Blvd W & & 365 Fifth Avenue \\
Bronx, NY, 10468, USA & & New York, NY, 10016, USA
\end{tabular}
}
\email{r.bettiol@lehman.cuny.edu}
\author[M. J. Goodman]{McFeely Jackson Goodman}
\address{California Polytechnic State University\newline\indent
Department of Mathematics \newline\indent
San Luis Obispo, CA, 93407, USA}
\email{mgoodm06@calpoly.edu}
\numberwithin{equation}{section}
\numberwithin{theorem}{section}
\date{October 18, 2024}
\begin{document}
\begin{abstract}
We determine linear inequalities on the eigenvalues of curvature operators that imply vanishing of the twisted $\hat A$ genus on a closed Riemannian spin manifold, where the twisting bundle is any prescribed parallel bundle of tensors. 
These inequalities yield surgery-stable curvature conditions tailored to annihilate further rational cobordism invariants, such as the Witten genus, elliptic genus, signature, and even the rational cobordism class itself.
\end{abstract}

\maketitle

\section{Introduction}
Closed Riemannian spin manifolds $M$ with positive scalar curvature ($\scal>0$) have vanishing $\hat A$ genus $\hat A(M)=0$, by a celebrated theorem of Lichnerowicz~\cite{lichn}. Indeed, the Dirac operator $D$ on the spinor bundle of $M$ satisfies
\begin{equation}\label{eq:Lichn}
	D^2=\nabla^*\nabla + \tfrac{\scal}{4},
\end{equation}
and, by the Atiyah--Singer Index Theorem, $\hat A(M)\neq 0$ implies its kernel is nontrivial. Thus, $\hat A(M)\neq 0$ is a topological obstruction to the existence of Riemannian metrics with $\scal>0$ on such $M$; e.g., this shows that $K3$ surfaces do not admit $\scal>0$.

In this paper, we establish similar obstructions to stronger curvature conditions.
More precisely, we find curvature conditions that are, on the one hand, \emph{weak enough} to be satisfied by large classes of manifolds $M$, in particular, are preserved under surgeries of high codimension; 
on the other hand, \emph{strong enough} to imply vanishing of certain rational cobordism invariants if $M$ is spin, such as twisted $\hat A$ genera in Theorem~\ref{mainthm:A}, Witten genus, elliptic genus, and signature in Theorem~\ref{mainthm:witten}, or even all the Pontryagin numbers and hence the rational cobordism class itself in Theorem~\ref{mainthm:D}.

\subsection{\texorpdfstring{Curvature conditions $C_p(R)>0$}{The curvature conditions}}
Let \(R\colon \wedge^2TM\to\wedge^2TM\) be the curvature operator of $(M^n,\g)$, and \(\nu_1\leq \dots\leq \nu_{\binom{n}{2}}\) be its eigenvalues.
For each \(0<r\leq \binom{n}{2}\),~let
\[\Sigma(r,R)=\nu_1+\dots+\nu_{\floor{r}}+(r-\floor{r})\nu_{\floor{r}+1}.\] 
In particular, if $r\in\N$ is a positive integer, then $\Sigma(r,R)$ is the sum of the smallest $r$ eigenvalues of $R$, and $-\Sigma(r,-R)$ is the sum of the largest $r$ eigenvalues of $R$. Note that $2\,\Sigma\big({\binom{n}{2}},R\big)=\scal$ is the scalar curvature of $(M^n,\g)$.
For each \(p\in\N\), define
\begin{equation}\label{eq:apbp}
\textstyle r_p=\frac{n^2+(8p-1)n+8p(p-1)}{n+8p(p+1)} \quad \text{ and }\quad r'_p=\frac{n+p-2}{p}.	
\end{equation}
Let $\mu$ be the largest eigenvalue of $\Ric$, and define the functions $C_p(R)\colon M\to\R$ as
\begin{align*}
C_1(R)&=\textstyle\min\!\Big\{\!\!\left(\frac{n}{8}+2\right)\!\Sigma({r_1},R),\frac{\scal}{8}\!\Big\} +\frac{\scal}{8}-\mu, \\[2pt]
C_p(R)&=\textstyle \min\!\Big\{\!\!\left(\frac{n}{8}+p^2+p\right)\!\Sigma({r_p},R),\frac{n(n-1)}{8r_p}\Sigma({r_p},R) \!\Big\}+\frac{\scal}{8}+p^2\Sigma({r'_p},-R), \;\; p\geq2.
\end{align*}
Clearly, the above quantities are linear combinations of the eigenvalues of \(R\) if $p\geq 2$, and of $R$ and \(\Ric\) if \(p=1\). The coefficients of these linear combinations depend on the sign of \(\Sigma(r_p,R)\) due to the terms involving minima.
Moreover, \(C_{p}(R)>0\) implies \(C_{q}(R)>0\) for all \(1\leq q < p\), as well as \(\scal>0\), see Proposition~\ref{prop:nested}.

\subsection{\texorpdfstring{Twisted $\hat A$ genera}{Twisted A hat genera}}
Given any complex vector bundle $E\to M$, let
\begin{equation*}
\hat A(M,E)=\langle \hat A(TM)\cdot\operatorname{ch}(E),[M]\rangle,
\end{equation*}
where \(\hat{A}(TM)\) is the \(\hat{A}\) polynomial in the Pontryagin classes of \(TM\), and \(\operatorname{ch}(\cdot)\) is the Chern character. By the Atiyah--Singer Index Theorem, if $M$ is spin, then $\hat A(M,E)$ is equal to the index of the Dirac operator on spinors of $M$ twisted with~\(E\); in particular, it is an integer. Furthermore, if \(E\) is associated to the frame bundle of the Riemannian manifold \((M^n,\g)\) by a representation of \(\SO(n)\), then \(\hat{A}(M,E_\C)\)  is a universal rational linear combination of  Pontryagin numbers of \(M\) that depends only on the rational oriented cobordism class of \(M\), analogously to $\hat A(M)=\langle \hat A(TM),[M]\rangle$.

Our main result is the following family of vanishing theorems for $\hat A(M,E_\C)$:

\begin{mainthm}\label{mainthm:A}
Let \(M\) be a closed Riemannian spin manifold of dimension $n=4k$, $k\geq2$, and \(E\subseteq TM^{\otimes p}\) be a parallel subbundle. If \(C_p(R)>0\), then~$\hat A(M,E_\C)=0$.
\end{mainthm}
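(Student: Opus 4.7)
The plan is to run the standard Bochner--Lichnerowicz argument for the twisted Dirac operator $D_E$ on $\Ss M\otimes E_\C$. By the Atiyah--Singer Index Theorem, $\hat A(M,E_\C) = \operatorname{ind}(D_E)$, so it suffices to show $\ker D_E = 0$. Combining this with the twisted Weitzenb\"ock formula
\[
D_E^2 = \nabla^*\nabla + \tfrac{\scal}{4} + \mathcal R^E, \qquad \mathcal R^E = \sum_{i<j} e_i\cdot e_j\cdot \otimes R^E(e_i,e_j),
\]
reduces the problem to verifying that $\tfrac{\scal}{4}+\mathcal R^E$ is strictly positive as a symmetric endomorphism of $\Ss M\otimes E_\C$ at every point; integrating against any $\phi\in\ker D_E$ then forces $\phi\equiv 0$.

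Since $E\subseteq TM^{\otimes p}$ is parallel, $R^E$ is the restriction to $E$ of the derivation action on $TM^{\otimes p}$ induced by the tensor $\SO(n)$-representation. Diagonalising $R$ at a point in an orthonormal eigenframe $\{\omega_\alpha\}\subset\wedge^2 T_xM$ with eigenvalues $\nu_\alpha$, one may rewrite
\[
\mathcal R^E = \sum_\alpha \nu_\alpha\,(\omega_\alpha\cdot)\otimes d\pi(\omega_\alpha),
\]
where $\omega_\alpha\cdot$ denotes Clifford multiplication on $\Ss M$ and $d\pi$ is the derived tensor representation restricted to $E$. The pointwise inequality I would target is
\[
\tfrac{\scal}{4}+\mathcal R^E\;\geq\; 2\,C_p(R)
\]
as symmetric endomorphisms on $\Ss_xM\otimes E_{\C,x}$, so that the hypothesis $C_p(R)>0$ furnishes the required positivity.

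Proving this inequality is the crux, and the main obstacle, of the argument. I anticipate two lower bounds for $\langle\mathcal R^E\phi,\phi\rangle$ on a unit $\phi$ that share the same building block $\Sigma(r_p,R)$ but differ by their numerical coefficient: a coarse one relying on a uniform operator-norm bound for $d\pi$ (yielding the coefficient $\tfrac{n}{8}+p^2+p$), and a finer one obtained by a Cauchy--Schwarz rearrangement that uses only the smallest $r_p$ eigenvalues while exploiting orthogonality in the spinor factor (yielding the coefficient $\tfrac{n(n-1)}{8r_p}$). The $\min$ in the definition of $C_p(R)$ selects whichever of the two estimates is more restrictive for a given sign of $\Sigma(r_p,R)$. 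The cutoff $r_p'=(n+p-2)/p$ should emerge as the dimension-over-degree balance controlling $d\pi$ on the symmetric $p$-th power component of $TM^{\otimes p}$, which is the worst-case isotypical piece, and the correction $p^2\Sigma(r_p',-R)$ captures the contribution of the largest eigenvalues to $d\pi$ on that component. For $p=1$, the tensor action is $R$ itself, its symmetric part on $TM$ is the Ricci endomorphism, and this correction collapses to the explicit $-\mu$ appearing in $C_1(R)$.
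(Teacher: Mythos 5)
Your overall framework—index theorem, twisted Lichnerowicz formula $D_E^2=\nabla^*\nabla+\tfrac{\scal}{4}+\mathcal R^E$, and reduction to a pointwise lower bound on the curvature endomorphism—is exactly the paper's top-level strategy, and that reduction is fine. But the pointwise inequality you "target" is not an intermediate step of the theorem; it \emph{is} the theorem, since $C_p(R)$ is defined precisely so that such an estimate holds, and your proposal does not prove it. The two lower bounds you "anticipate" (coefficients $\tfrac n8+p^2+p$ and $\tfrac{n(n-1)}{8r_p}$) are asserted with heuristic labels ("uniform operator-norm bound", "Cauchy--Schwarz rearrangement exploiting orthogonality in the spinor factor") rather than arguments, and the route you sketch---estimating $\mathcal R^E=\sum_\alpha\nu_\alpha\,(\omega_\alpha\cdot)\otimes\dd\pi(\omega_\alpha)$ directly---runs into the very difficulty the paper is built to avoid: this cross term mixes Clifford multiplication with the derived action of $\pi$, and naive norm or Cauchy--Schwarz bounds on it only give positivity under far stronger hypotheses than $C_p(R)>0$. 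The paper's key structural move (its Lemma on the twisted Dirac curvature term) is to rewrite $\tfrac{\scal}{4}+\mathcal R^E=K(R,\pi_S\otimes\pi)+\tfrac{\scal}{8}\operatorname{Id}-1\otimes K(R,\pi)$, i.e.\ to complete the cross term into the Weitzenb\"ock curvature operator of the tensor-product representation; only then can each piece be estimated by the sharp representation-theoretic bound $K(R,\pi_\lambda)\succeq\|\lambda\|^2\,\Sigma\!\big(\tfrac{\langle\lambda,\lambda+2\rho\rangle}{\|\lambda\|^2},R\big)\operatorname{Id}$ (Freudenthal/Casimir plus the eigenvalue-rearrangement argument of Petersen--Wink). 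Your proposal contains neither this decomposition nor the eigenvalue estimate, so the crux is genuinely missing.

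A second gap: $E$ is an \emph{arbitrary} parallel subbundle of $TM^{\otimes p}$, so one must control every irreducible $\SO(n)$-subrepresentation $\pi\subseteq\pi_{\omega_1}^{\otimes p}$ and every irreducible component of $\pi_S\otimes\pi$ simultaneously. Your claim that the symmetric $p$-th power is the "worst-case isotypical piece" is the right intuition but is unproven; the paper establishes it as uniform inequalities $PW(\pi)\geq r'_p$, $PW(\pi_{\lambda_i})\geq r_p$, together with the Casimir bounds $\langle\lambda,\lambda+2\rho\rangle\leq p(n+p-2)$ and $\tfrac{n(n-1)}{8}\leq\langle\lambda_i,\lambda_i+2\rho\rangle\leq p(n+p-1)+\tfrac{n(n-1)}{8}$, by checking a concave function on a simplex of admissible highest weights. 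Finally, reconciling the $\min$ in $C_p(R)$ with the term $p^2\Sigma(r'_p,-R)$ requires a sign analysis (one must show $\Sigma(r'_p,-R)\leq0$ whenever $C_p(R)\geq0$, using monotonicity of $r\mapsto\Sigma(r,R)/r$), which your sketch does not address. Until these three ingredients are supplied, the proposal is an outline of the standard Bochner reduction rather than a proof.
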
	

For specific parallel subbundles \(E\subseteq TM^{\otimes p}\), e.g., \(E=\wedge^pTM\) or \(E=\Sym^pTM\), we give curvature conditions weaker than $C_p(R)>0$ that still  imply \(\hat{A}(M,E_\C)=0\), see Theorem~\ref{GeneralVanishing}. 
Simple examples of applications of Theorem~\ref{mainthm:A} are that $M=\mathds H P^2$ does not admit \(C_1(R)>0\), because it has $\hat A(M,TM_\C)\neq0$, see also Theorem~\ref{mainthm:D}; and $M=\Ca P^2$ does not admit \(C_2(R)>0\), since it has $\hat A(M,\wedge^2 TM_\C)\neq0$.

Let us examine in further detail the case in which $p=1$ and $E=TM$; this is of relevance in Mathematical Physics in connection with Rarita--Schwinger fields (spin $\frac32$ fermions), see \cite{homma-semmelmann}. 
Under symmetry assumptions, Dessai~\cite{dessai-pams} proved that spin manifolds with positive sectional curvature satisfy $\hat{A}(M) = \hat A(M, TM_\C)=0$.
Note that \(C_1(R)\geq 0\) if \(\Sigma({r_1},R)\geq0\), where $r_1=\tfrac{n(n+7)}{n+16}$, and $\frac{\scal}{8}\operatorname{Id}-\Ric\succeq0$; and \(C_1(R)>0\) if either term is positive.

\begin{mainthm}\label{A-props}
The curvature condition $C_1(R)>0$ satisfies the following:
\begin{enumerate}[\rm (i)]
\item it is preserved under surgeries of codimension at least \( 10\);
\item every oriented cobordism class \([M^{n}]\), \(n\geq 10,\) which is not a nontrivial torsion class is represented by a Riemannian manifold with \(C_1(R)>0;\)
\item every spin cobordism class \([M^{n}]\), \(n\geq 10,\) with $\hat{A}(M) = \hat A(M, TM_\C)=0$ has a multiple which is represented by a spin Riemannian manifold with \(C_1(R)>0.\)
\end{enumerate}		
\end{mainthm}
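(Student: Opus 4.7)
The plan is to address the three parts in sequence, with part~(i) providing the surgery-stability engine for the cobordism-theoretic arguments in parts~(ii) and~(iii).

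For~(i), I would first verify that $\{R : C_1(R) > 0\}$ is an $O(n)$-invariant, open, positively homogeneous convex cone in the space of algebraic curvature operators. Invariance and scaling are immediate from the eigenvalue-based definition, while convexity follows from concavity of $C_1$ in $R$: the maps $R \mapsto \Sigma(r_1, R)$ and $R \mapsto \scal$ are linear, $R \mapsto \mu$ is convex as a largest eigenvalue of $\Ric$, and the minimum of two linear functionals is concave, so $C_1$ is a sum of concave functions. I would then invoke Hoelzel's surgery theorem for $O(n)$-invariant open convex cones of algebraic curvature operators: the condition $C_1(R) > 0$ is preserved under surgeries of codimension~$\geq c$ provided the curvature operator of the model $S^{c-1} \times \R^{n-c+1}$, with unit round sphere factor, satisfies $C_1 > 0$. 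For $c = 10$ this model has $\binom{9}{2} = 36$ unit eigenvalues of $R$ with the rest zero, and $\Ric$ has $9$ eigenvalues equal to $8$ and $n - 9$ eigenvalues equal to $0$, so $\mu = 8$ and $\scal/8 = 9$; since the zero eigenvalues of $R$ outnumber $r_1$ for all $n \geq 10$, one has $\Sigma(r_1, R) = 0$ and $C_1 = \min\{0, 9\} + 9 - 8 = 1 > 0$.

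For~(ii), combining~(i) with the Gromov--Lawson--Stolz bordism framework shows that the set of oriented cobordism classes in dimension $n \geq 10$ admitting a representative with $C_1 > 0$ is closed under the codimension-$\geq 10$ surgery equivalence relation; so it suffices to exhibit such representatives of a $\Z$-spanning set for $\Omega^{SO}_n$ modulo torsion, which vanishes unless $n = 4k$. The natural building blocks are products of complex projective spaces with Fubini--Study metrics: on $\C P^m$ the curvature operator has eigenvalues $0$ (multiplicity $m(m-1)$), $2$ (multiplicity $m^2-1$), and $2m+2$ (multiplicity $1$), with $\mu = 2(m+1)$ and $\scal/8 = m(m+1)/2$, yielding $C_1 = (m+1)(m-4)/2 > 0$ for $m \geq 5$. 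A parallel eigenvalue count shows that $\C P^{m_1} \times \cdots \times \C P^{m_k}$, with equal or suitably relative scaling of factors, satisfies $\Sigma(r_1, R) = 0$ and $\scal/8 > \mu = 2\max_i(m_i + 1)$ as soon as the total real dimension is $\geq 12$. With orientation reversals (isometric to Fubini--Study) and connected sums (surgery of codimension~$n$), these products realize any class that is not a nontrivial torsion class.

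For~(iii), the argument parallels~(ii) in the spin setting. Theorem~A forces $\hat A(M) = \hat A(M, TM_\C) = 0$ whenever $C_1 > 0$, so these vanishings are necessary. For the converse up to a multiple, I would invoke the Anderson--Brown--Peterson decomposition of $\Omega^{\Spin}_*$ modulo torsion together with Stolz's $\mathds H P^2$-bundle construction: a multiple of any class in the simultaneous kernel of $\hat A$ and $\hat A(-, T(-)_\C)$ is represented by a connected sum of total spaces of $\mathds H P^2$-bundles over Fubini--Study products (with a submersion metric and fiber scaled small so the base curvature dominates), glued via surgeries of codimension~$\geq 10$; the factor ``multiple'' absorbs denominators arising from the integral combinatorics. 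The main technical obstacle in~(ii) and~(iii) is verifying $C_1 > 0$ on products and bundle constructions: products introduce many zero eigenvalues on the mixed $\wedge^2$ summand, so $\Sigma(r_1, R) = 0$ and $C_1 > 0$ reduces to the single inequality $\scal/8 > \mu$, which requires carefully scaling the factors so that the dominant Ricci eigenvalue of any single factor does not exceed one-eighth of the total scalar curvature; on the $\mathds H P^2$-bundles in~(iii), the sectional curvature bounds on the fiber are favorable, but the mixed fiber/base curvature eigenvalues depend nontrivially on the connection and fiber scale and must be controlled to preserve $C_1 > 0$.
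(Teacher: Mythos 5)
Your part (i) is essentially the paper's proof: Hoelzel's theorem applied to the open convex $\mathsf O(n)$-invariant cone $\{C_1(R)>0\}$, with the computation on $\R^{n-d+1}\times\Ss^{d-1}$ giving positivity exactly for $d\geq 10$. The gaps are in (ii) and (iii).

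For (ii), the building blocks you propose are insufficient: products of complex projective spaces generate $\Omega^{\SO}_*\otimes\mathds{Q}$, but they do \emph{not} generate $\Omega^{\SO}_*/\text{torsion}$ integrally, so connected sums of them cannot represent every non-torsion class — only some multiple of it, which is the weaker statement of type (iii). Concretely, by Stong's criterion an integral generator in dimension $4k$ with $2k+1=p^r$ must have Milnor number $s_k=\pm p$; since $s_k$ vanishes on decomposable (product) classes and $s_4(\C P^{8})=9\neq\pm 3$, the classes of products of projective spaces span only an index-divisible-by-$3$ subgroup of $\Omega^{\SO}_{16}/\text{torsion}$, and the same failure recurs whenever $2k+1$ is a proper prime power. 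This is precisely why the paper supplements $\C P^{2k}$ with the Milnor hypersurfaces $H_{ij}$ (using $s_{(i+j-1)/2}(H_{ij})=-\binom{i+j}{i}$ and divisibility of binomial coefficients), and then must construct $C_1>0$ metrics on products involving $H_{ij}$: it uses the $\C P^{j-1}$-bundle structure over $\C P^i$ with the base scaled large, so the curvature operator approaches that of $\C P^{j-1}\times\R^{2i}$, and the dimension condition makes $\tfrac{\scal}{8}-\mu>0$ in the limit. Your proposal contains no analogue of this step, so as written it proves only a ``rational'' version of (ii).

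For (iii), invoking the Anderson--Brown--Peterson splitting and Stolz's $\mathds{H}P^2$-bundle theorem does not engage the hypothesis $\hat A(M,TM_\C)=0$ at all: Stolz's theorem describes the kernel of $\alpha$ (rationally, of $\hat A$), which strictly contains the simultaneous kernel of $\hat A$ and the twisted genus. Your claim that the resulting connected sums of $\mathds{H}P^2$-bundle total spaces all admit $C_1>0$ cannot be correct in that generality: a class such as $[K3\times K3\times \mathds{H}P^2]$ lies in $\ker\hat A$, hence a multiple of it would be represented by such bundle total spaces, yet it has $\hat A(\cdot\,,T(\cdot)_\C)=\hat A(K3)^2\,\hat A(\mathds{H}P^2,T\mathds{H}P^2_\C)\neq0$, so by Theorem~\ref{mainthm:A} no representative can carry $C_1>0$ (indeed already $C_1(R_{\mathds{H}P^2})=0$ for Fubini--Study). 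So the representation-by-bundles claim is unjustified for the subgroup actually at issue, and the metric claim is false as stated. The paper avoids this entirely by working rationally with the polynomial generators $[K3]$ and $[\mathds{H}P^k]$, $k\geq2$, of $\Omega^{\Spin}_*\otimes\mathds{Q}$, using the product formula $\hat A(M\times N,T(M\times N)_\C)=\hat A(M)\hat A(N,TN_\C)+\hat A(N)\hat A(M,TM_\C)$ to show the only monomials with a surviving invariant are $(K3)^q$ and $(K3)^{q-2}\times\mathds{H}P^2$; every remaining monomial contains a factor $\mathds{H}P^k$ with $k>2$ or $\mathds{H}P^2\times\mathds{H}P^2$ and hence admits $C_1>0$ by the product construction (with the $K3$ factors absorbed as the arbitrary factor $N$), after which connected sums and the multiple finish the argument. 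You would need to replace your Stolz-based step with an argument of this kind that actually uses the vanishing of the twisted genus.
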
	

By Theorem~\ref{A-props}, the spin condition in Theorem~\ref{mainthm:A} is necessary if \(p=1\). Indeed, without the spin condition, \(C_1(R)>0\) places no restriction on the rational cobordism type of a manifold of dimension at least 10, and the conclusion of Theorem~\ref{mainthm:A}, along with the vanishing of the \(\hat{A}\) genus, are the only restrictions on the rational spin cobordism type of a spin manifold such that \(C_1(R)>0.\) 
Furthermore, Theorem~\ref{A-props} implies that the curvature condition \(C_1(R)>0\) does not restrict any Betti numbers nor any individual Pontryagin numbers in sufficiently large dimensions. 

We also prove a surgery stability result for the curvature conditions \(C_p(R)>0\), $p\geq 2$. Namely, \(C_p(R)>0\) is preserved under surgeries of codimension \(d\) on manifolds of dimension \(n\) provided that \((d-1)(d-2)>8p(p+n-2)\), see Proposition~\ref{A-products}.

\subsection{Witten genus and elliptic genus}
The \emph{Witten genus} and the \emph{elliptic genus} are rational cobordism invariants that can be written as formal power series with coefficients given by linear combinations of Pontryagin numbers. The elliptic genus generalizes both \(\hat{A}\) and \(L\) genera, see \eqref{eq:phi-interp} for details. Using Theorem~\ref{mainthm:A}, we prove:

\begin{mainthm}\label{mainthm:witten}
	Let \(M\) be a closed Riemannian spin manifold of dimension $n=4k$. 
	\begin{enumerate}[\rm (i)]
		\item Set \(p=\lfloor{\frac{k}{6}}\rfloor-1\) if \(k\equiv 1\mod 6\), and \(p=\lfloor{\frac{k}{6}}\rfloor\) otherwise.  If \(p\geq1\), \(C_p(R)>0,\) and the first Pontryagin class of \(M\) vanishes, then the Witten genus of \(M\) vanishes.
		\item If \(k\geq 2\) and \(C_{\lfloor{k/2}\rfloor}(R)>0\), then the elliptic genus (and hence the signature) of \(M\) vanishes.  
	\end{enumerate}	
\end{mainthm}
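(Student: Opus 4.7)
The plan is to treat both parts uniformly: express each genus as a formal \(q\)-series whose coefficient of \(q^j\) is a \(\Z\)-linear combination of twisted \(\hat A\) genera \(\hat A(M,E_\C)\) with \(E\subseteq TM^{\otimes q}\) a parallel subbundle of tensor degree \(q\leq j\), invoke \Cref{mainthm:A} to kill these coefficients one by one, and then use modularity to promote vanishing of finitely many initial coefficients to vanishing of the whole series.

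For part (i), I would start from the standard presentation of the Witten genus as a twisted Dirac index: up to a normalizing power of the Dedekind eta function, one has
\[
\phi_W(M)=q^{-n/24}\,\hat A\!\left(M,\,\bigotimes_{\ell\geq 1}S_{q^\ell}(\widetilde{TM}_\C)\right),
\]
where \(\widetilde{TM}=TM\ominus n\) is the reduced tangent bundle and \(S_t(V)=\sum_{a\geq 0}t^a\Sym^a V\). Expanding the infinite tensor product, the coefficient of \(q^j\) is a sum over tuples \((a_\ell)\) with \(\sum_\ell \ell a_\ell=j\) of terms involving \(\bigotimes_\ell \Sym^{a_\ell}(\widetilde{TM}_\C)\), whose total tensor degree is \(\sum_\ell a_\ell\leq \sum_\ell\ell a_\ell=j\); the virtuality of \(\widetilde{TM}\) only shifts contributions to lower tensor degrees. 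Hence the \(q^j\)-coefficient is a \(\Z\)-linear combination of twisted genera \(\hat A(M,E_\C)\) with \(E\subseteq TM^{\otimes q}\), \(q\leq j\). By \Cref{prop:nested}, \(C_p(R)>0\) implies \(C_j(R)>0\) for all \(j\leq p\), so \Cref{mainthm:A} forces the first \(p+1\) coefficients of \(\phi_W(M)\) to vanish. Under \(p_1(M)=0\), the Witten genus is a modular form of weight \(2k\) for \(\SL_2(\Z)\); the standard structure of \(\C[E_4,E_6]\) gives
\[
\dim M_{2k}(\SL_2(\Z))=\begin{cases}\lfloor k/6\rfloor, & k\equiv 1\pmod 6,\\ \lfloor k/6\rfloor+1, & \text{otherwise},\end{cases}
\]
and by the valence formula the first \(d:=\dim M_{2k}(\SL_2(\Z))\) Fourier coefficients determine an element of \(M_{2k}(\SL_2(\Z))\) uniquely. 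Matching \(p+1=d\) yields exactly the values of \(p\) in the statement and forces \(\phi_W(M)=0\).

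Part (ii) proceeds by the same template applied to the elliptic genus. Using \eqref{eq:phi-interp}, its \(q\)-expansion has coefficients that are again \(\Z\)-linear combinations of twisted \(\hat A\) genera with parallel twisting bundle \(E\subseteq TM^{\otimes q}\), \(q\leq j\), since the exterior factors \(\Lambda^a\subseteq T^{\otimes a}\) satisfy the same tensor-degree bound as \(\Sym^a\). The elliptic genus is a modular form of weight \(2k\) for \(\Gamma_0(2)\), and since \(M_\ast(\Gamma_0(2))\) is a polynomial ring on generators of weights \(2\) and \(4\), one has \(\dim M_{2k}(\Gamma_0(2))=\lfloor k/2\rfloor+1\). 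Applying \Cref{mainthm:A} with \(p=\lfloor k/2\rfloor\) annihilates the first \(\lfloor k/2\rfloor+1\) Fourier coefficients and modularity concludes the vanishing; the signature vanishes as well since it is recovered from the elliptic genus by evaluation at a cusp. The main obstacle is the tensor-degree bookkeeping: one must verify that after fully expanding the virtual bundle \(\widetilde{TM}=TM\ominus n\) and combining the \(\Sym\) and \(\Lambda\) factors with their \(q\)-weights, the coefficient of \(q^j\) really does reduce to a \(\Z\)-linear combination of \(\hat A(M,E_\C)\) with \(E\subseteq TM^{\otimes q}\) for \(q\leq j\). This is a combinatorial exercise in symmetric functions that presents no fundamental difficulty once the conventions are fixed.
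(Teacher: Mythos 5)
Your proposal follows essentially the same route as the paper: expand the genus as a $q$-series whose coefficient of $q^{j}$ is a linear combination of $\hat A(M,E_\C)$ with $E\subseteq TM^{\otimes s}$ parallel and $s\leq j$, kill the initial coefficients using \Cref{mainthm:A} (together with \Cref{prop:nested}), and finish with the valence/dimension bound for $M_{2k}(\SL(2,\Z))$ and $M_{2k}(\Gamma_0(2))$, which is exactly the content of \Cref{ordinf}; your coefficient count $p+1=\dim M_{2k}(\SL(2,\Z))$ reproduces the stated values of $p$.

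One point in (ii) needs correction, since as written the key step would fail: the defining $q$-expansion of $\varphi(M)$ (the one at the cusp $\infty$) has coefficients of the form $\langle L(TM)\cdot\operatorname{ch}(\Psi_2(\cdots)),[M]\rangle$, i.e.\ twisted \emph{signatures}, not twisted $\hat A$ genera, so \Cref{mainthm:A} does not apply to it directly, and \eqref{eq:phi-interp} (which only records the two cusp limits) does not supply the needed structure. The fix is to work with the expansion at the other cusp, namely the modified elliptic genus $\widetilde\varphi(M)(2\tau)$ of \eqref{tildephi}, which again lies in $M_{2k}(\Gamma_0(2))$, whose $q^{d}$-coefficients are linear combinations of $\hat A(M,E_\C)$ with $E\subseteq TM^{\otimes s}$, $s\leq d$ (the $\wedge_{-q^{2\ell-1}}$ and $\Sym_{q^{2\ell}}$ factors satisfy the tensor-degree bound, as you anticipate), and then deduce $\varphi(M)=0$ from $\widetilde\varphi(M)=0$ via \eqref{eq: phi_phi_tilde}; this is precisely how the paper argues. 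Two minor points in (i): with the paper's normalization \eqref{eq:witten-genus} there is no $q^{-n/24}$ prefactor (and there should not be, since the relevant level-one form has an integral $q$-expansion), and the $q^{0}$ coefficient is $\hat A(M)$, whose vanishing comes from Lichnerowicz via $\scal>0$, which indeed follows from $C_p(R)>0$ by \Cref{prop:nested} rather than from \Cref{mainthm:A} itself. With these adjustments your argument coincides with the paper's proof.
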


The dimensional hypotheses in Theorem~\ref{mainthm:witten}, namely $n\geq 24$ and $n\neq 28$ in (i), and $n\geq 8$ in (ii), only exclude situations already covered by Lichnerowicz.  Namely, in (i), if \(p=0\) and the first Pontryagin class \(p_1(TM)\) vanishes, then the Witten genus vanishes if and only if the \(\hat{A}\) genus vanishes; in (ii), the Witten, elliptic, and \(\hat{A}\) genera in dimension $4$ all vanish if and only if the signature vanishes. In Section~\ref{section: ex}, we show that there are nontrivial rational cobordism classes containing manifolds satisfying the curvature conditions in Theorem~\ref{mainthm:witten} (i). Furthermore, the curvature conditions in (i) and (ii) are satisfied by round spheres, and those in (i) are stable under connected sums and other high-codimension surgeries (see Proposition~\ref{A-products}).

The Witten genus ought to vanish on closed Riemannian spin manifolds $M$ with $\Ric\succ0$ for which the spin characteristic class \(\frac12p_1(TM)\) vanishes (these are so-called \emph{string} manifolds, a condition weaker than \(p_1(TM)=0\)), according to a compelling conjecture of Stolz~\cite{stolz}. However, \(C_p(R)>0\) for \(p\) as in Theorem~\ref{mainthm:witten} is not implied by $\Ric\succ0$. Even more, as \(C_p(R)>0\) is stable under certain surgeries, we can produce examples of string manifolds with infinite fundamental group satisfying \(C_p(R)>0\), which therefore do not admit metrics with \(\Ric\succ0\) by Myer's theorem.
On the other hand, in dimensions $24\leq n < 48$ or $n=52$, the Witten genus of spin manifolds $M^n$ with $p_1(TM)=0$ vanishes if and only if a multiple of the spin cobordism class $[M]$ is represented by a spin Riemannian manifold with $C_1(R)>0$.

\subsection{Rational cobordism class}
Closed Riemannian manifolds (not necessarily spin) of dimension $n$ with $\Sigma(n-p,R)>0$ for some $0< p \leq \floor{\frac{n}{2}}$ have vanishing Betti number $b_p(M)=0$, by a recent work of Petersen and Wink~\cite{petersen-wink}. Combining this with Theorem~\ref{mainthm:A}, we find sufficient conditions for \emph{all} Pontryagin numbers to vanish, which, in turn, implies the vanishing of the rational cobordism class, namely:

\begin{mainthm}\label{mainthm:D}
Let \(M\) be a closed Riemannian spin manifold of dimension $n=4k$.
	\begin{enumerate}[\rm (i)]
	\item If $k=2$, $\Sigma(5,R)>0$, and \(M\) is Einstein, then $M$ is  null-cobordant.
	\item If \(k\geq 6\) is even, $\Sigma(2k+4,R)>0$, and \(\frac{\scal}{8}\operatorname{Id}-\Ric\succeq0\), then \(M\) is rationally null-cobordant.
	\item If \(k\geq9\) is odd, $\Sigma(2k+6,R)>0$, and \(\frac{\scal}{8}\operatorname{Id}-\Ric\succeq0\), then \(M\) is rationally null-cobordant. 
\end{enumerate}
\end{mainthm}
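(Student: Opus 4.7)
The aim is to force every Pontryagin number $p_I[M]$ of $M^{4k}$ to vanish, which is equivalent to $[M]=0$ in the rational oriented cobordism group; for (i), the additional fact that $\Omega^{SO}_{8}\cong\mathds{Z}^2$ is torsion-free and fully detected by $p_1^2$ and $p_2$ upgrades rational to integral null-cobordism. I will extract linear vanishing relations among the $p_I[M]$ from two sources: Theorem~\ref{mainthm:A}, which gives $\hat A(M,E_\C)=0$ for each parallel $E\subseteq TM^{\otimes p}$ with $C_p(R)>0$, and the Petersen--Wink theorem, which vanishes $b_i(M)$ in low (and dually high) degrees, thereby constraining $\chi(M)$ and $\sigma(M)=L(M)[M]$ via Gauss--Bonnet--Chern and Hirzebruch's signature theorem.

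\textbf{Verification and deployment.} The first task is to check that the curvature hypotheses force $\scal>0$ and $C_p(R)>0$ for a suitable range of $p$. In (i), one has $r_1=5$ when $n=8$, so $\Sigma(5,R)>0$ immediately forces $\nu_5,\dots,\nu_{28}>0$ and hence $\scal>0$, while the Einstein condition collapses $\tfrac{\scal}{8}-\mu=0$, giving $C_1(R)=\min\{3\,\Sigma(5,R),\,\scal/8\}>0$. In (ii)--(iii), the hypothesis $\Sigma(N,R)>0$ with $N=2k+4$ or $2k+6$ propagates to $\Sigma(r,R)>0$ for every $r\geq N$, which combined with the Ricci bound $\tfrac{\scal}{8}\operatorname{Id}-\Ric\succeq0$ should yield $C_p(R)>0$ over an explicit range of $p$, obtained by comparing $r_p$ and $r'_p$ with the thresholds in the hypotheses. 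Theorem~\ref{mainthm:A} then produces the twisted $\hat A$ vanishings, while Petersen--Wink vanishes the Betti numbers outside a cluster of middle degrees.

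\textbf{Conclusion for (i) and main obstacle for (ii)--(iii).} For (i), the two relations $\hat A(M)=0$ (Lichnerowicz) and $\hat A(M,TM_\C)=0$ (Theorem~\ref{mainthm:A} with $p=1$, $E=TM$) are $\mathds{Q}$-linear in $(p_1^2[M],p_2[M])$; a direct Chern-character computation using $\operatorname{ch}(TM_\C)=\sum_i 2\cosh(x_i)$ shows the resulting $2\times 2$ coefficient matrix has nonzero determinant, so both Pontryagin numbers must vanish. The main obstacle in (ii)--(iii) is that the space of Pontryagin numbers in dimension $4k$ has dimension $\pi(k)$ (the partition function); one must show that the twisted $\hat A$ vanishings from all admissible $p$'s, together with the $\chi$- and $\sigma$-constraints from Petersen--Wink, produce a system of rank $\pi(k)$ over $\mathds{Q}$. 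This is a character-theoretic rank computation over $\SO(n)$, and its parity-dependent behavior presumably accounts for the different thresholds $2k+4$ (even $k$) and $2k+6$ (odd $k$) appearing in the hypotheses.
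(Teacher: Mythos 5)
Your part (i) is essentially the paper's argument and is fine: in dimension $8$ one has $r_1=5$, the Einstein hypothesis gives $\mu=\tfrac{\scal}{8}$, so $C_1(R)=\min\{3\,\Sigma(5,R),\tfrac{\scal}{8}\}>0$, and the two relations $\hat A(M)=0$ and $\hat A(M,TM_\C)=0$ form a nondegenerate $2\times2$ system in $p_{(1,1)}$ and $p_{(2)}$; torsion-freeness of the cobordism group in dimension $8$ then upgrades the conclusion to null-cobordism.

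For (ii)--(iii), however, there is a genuine gap, in two places. First, the hypotheses do \emph{not} give $C_p(R)>0$ for any $p>1$: for $p>1$ the term $p^2\,\Sigma(r'_p,-R)$ in $C_p(R)$ is (minus) a sum of the \emph{largest} eigenvalues of $R$, which neither $\Sigma(2k+4,R)>0$ nor $\tfrac{\scal}{8}\operatorname{Id}-\Ric\succeq0$ controls; so \Cref{mainthm:A} is available only for $p=1$ (note $2k+4\le r_1$ in the stated range of $k$, so $(2k+4)$-positivity plus the Ricci bound does give $C_1(R)>0$). Consequently your plan of assembling ``twisted $\hat A$ vanishings from all admissible $p$'s'' into a rank-$\pi(k)$ system cannot get off the ground, and in any case a handful of genera together with constraints on $\chi$ and $\sigma$ could never have rank $\pi(k)$ for large $k$. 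Second, and more importantly, you are using the Petersen--Wink vanishing in the wrong way: its role is not to constrain $\chi(M)$ or $\sigma(M)$, but to kill the rational Pontryagin \emph{classes} themselves. Since $R$ is $(2k+4)$-positive (resp.\ $(2k+6)$-positive), \Cref{thm:pw} forces $H^{4i}(M;\mathds{Q})=0$ for every $i$ outside a narrow band around the middle degree, so for $k=2\ell$ only $p_\ell$ and $p_{2\ell}$ can be nonzero, and for $k=2\ell+1$ only $p_\ell$, $p_{\ell+1}$, $p_{2\ell+1}$; hence only \emph{two} Pontryagin numbers survive ($p_{(\ell,\ell)},p_{(2\ell)}$, resp.\ $p_{(\ell,\ell+1)},p_{(2\ell+1)}$). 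The proof is then completed exactly as in your part (i): $\hat A(M)=0$ and $\hat A(M,TM_\C)=0$ give a homogeneous $2\times2$ system, and its determinant, rewritten in Bernoulli numbers, is nonzero precisely when $\ell\neq2$ in the even case and $\ell\neq2,3$ in the odd case --- this Bernoulli nondegeneracy (not a parity-dependent rank computation) is what produces the restrictions $k\geq6$ even and $k\geq9$ odd, while the different thresholds $2k+4$ versus $2k+6$ merely reflect the wider band of middle degrees needed to isolate $p_\ell,p_{\ell+1}$ when $k$ is odd.
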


Closed oriented Riemannian manifolds of dimension $n=4k$ with $\Sigma(2k,R)>0$ are rational homology spheres by \cite{petersen-wink}; these are rationally null-cobordant. 

To facilitate comparison, let us further analyze the situation in dimension $n=8$, in which case a metric satisfies $\frac{\scal}{8}\operatorname{Id}-\Ric\succeq0$ if and only if it is Einstein, and rationally null-cobordant manifolds are null-cobordant as  there is no torsion in the cobordism group $\Omega_8^{\SO}$. A key example is the quaternionic projective plane $\mathds H P^2$, which is a spin manifold with signature $1$, thus not null-cobordant. While it follows from \cite{petersen-wink} that $\mathds H P^2$ does not support metrics with $\Sigma(4,R)>0$, we have by Theorem~\ref{mainthm:D} that $\mathds H P^2$ does not support Einstein metrics with $\Sigma(5,R)>0$; the same conclusions hold for connected sums $\#^\ell \mathds H P^2$, $\ell\geq0$. (The Fubini--Study metric on $\mathds H P^2$ is Einstein and has curvature operator $R\succeq0$ with kernel of dimension $18$, hence $\Sigma(r,R)>0$ only for $r\geq19$.) 

In dimensions $16$, $20$, and $28$, which are outside the scope of Theorem~\ref{mainthm:D}, there are stronger hypotheses that 
allow to reach the same conclusion, see~Theorem~\ref{thm:left-overs}.

\subsection{Key ideas and organization}
A sensible approach to seek generalizations of the Lichnerowicz obstruction to $\scal>0$  is to twist the Dirac operator with different vector bundles, causing changes in the curvature term in \eqref{eq:Lichn} that can be aimed at detecting other curvature conditions. 
More precisely, given a $\Spin(n)$-representation $\pi$, consider the twisted Dirac operator $D_\pi$ on the twisted spinor bundle $S\otimes E_\pi\to M$, where $S$ is the complex spinor bundle and $E_\pi$ is associated via $\pi$ to the principal bundle covering the frame bundle of $M$ by the spinor representation $\pi_S$. Then
\begin{equation*} 
    D_\pi^2=\nabla^*\nabla + \mathcal R_\pi,
 \end{equation*} 
where the endomorphism $\mathcal R_\pi$ is determined by the curvature operator $R$ of $M$ and the representation $\pi$; e.g., in case $\pi$ is the trivial representation one has $\mathcal R_\pi=\frac{\scal}{4}$, recovering \eqref{eq:Lichn}. The main difficulty in this approach is that $\mathcal R_\pi\succ0$ is algebraically very involved, and typically one can only ensure $\mathcal R_\pi\succ0$ by imposing unreasonably strong curvature assumptions such as $R\succ0$. Our first step towards overcoming this difficulty is to break $\mathcal R_\pi$ into simpler parts as explained in Lemma~\ref{HitchinDirac}, namely
\begin{equation*}
    \mathcal R_\pi=K(R,\pi_S\otimes \pi)+\tfrac{\scal}{8}\operatorname{Id}-K(R,\pi),
\end{equation*}
where $K(R,\pi)$ is the curvature endomorphism arising in the Weitzenb\"ock formula $\Delta_\pi =\nabla^*\nabla + t \, K(R,\pi) $ for Laplacians on the vector bundle $E_\pi$, up to a factor $t\in\R$, see \eqref{eq:krrho}; similarly for $K(R,\pi_S\otimes \pi)$ on the bundle $E_{\pi_S\otimes\pi}=S\otimes E_\pi$.
For instance, if $\pi$ is the defining representation of $\SO(n)$, then \(E_\pi=TM\) and \(K(R,\pi)=\Ric\).

The second step, motivated by recent works of Petersen and Wink~\cite{petersen-wink,petersen-wink-cplx,petersen-wink-3} and Nienhaus, Petersen, and Wink~\cite{nienhaus-petersen-wink}, is to prove:
 
 \begin{mainthm}\label{mainthm:pw}
 	 Let \(\pi\) be an irreducible orthogonal or unitary representation of \(\SO(n)\) or \(\Spin(n)\) with highest weight \(\lambda\). Let \(\rho\) be the half-sum of positive roots in \(\mathfrak{so}(n,\C)\) and \(r=\frac{\left<\lambda,\lambda+2\rho\right>}{\|\lambda\|^2}.\)  Then \(K(R,\pi)\succeq \|\lambda\|^2\,\Sigma(r,R)\operatorname{Id}\).
 \end{mainthm}

Theorem~\ref{mainthm:pw} generalizes the central estimate of \cite{petersen-wink}, which is also at the heart of the other papers cited above, and casts it in a natural representation-theoretic framework. We prove Theorem~\ref{mainthm:pw} in a more general formulation (Proposition~\ref{lowerbound}), relevant for manifolds with special holonomy.
This constitutes our main tool for relating the curvature endomorphisms $K(R,\pi)$ and $\mathcal R_\pi$ to linear combinations of the eigenvalues of $R$; with this in hand, standard Bochner-type arguments lead to the desired vanishing results, see Theorem~\ref{thm:bochner} for the case of~$\Delta_\pi$, and Theorem~\ref{GeneralVanishing} for that of~$D_\pi$. As explained in Section~\ref{sec:twistedDirac}, Theorem~\ref{mainthm:A} is a consequence of the latter, as $C_p(R)$ is defined so that Theorem~\ref{GeneralVanishing} can be applied to any subrepresentation $\pi$ of the $p^\text{th}$ tensor power of the defining representation. Regarding the former (see Section~\ref{sec:bochner}), besides recovering the results in \cite{petersen-wink,petersen-wink-cplx,petersen-wink-3} that relate $\Sigma(r,R)>0$ to the vanishing of Betti numbers in the Riemannian and K\"ahler settings, we are able to improve a Tachibana-type result relaxing the \emph{Einstein} assumption to \emph{harmonic curvature operator} (Theorem~\ref{thm:pwtachibana}), and to prove a vanishing theorem (Theorem~\ref{thm:killing}) for trace-free conformal Killing tensors on manifolds with $\Sigma(r,R)<0$, similar to results in \cite{DS10} and \cite{hms} for manifolds with $\sec<0$.

In Section~\ref{witten-genus}, we analyze the Witten genus and elliptic genus as modular forms to show  they vanish whenever the characteristic numbers described in Theorem~\ref{mainthm:A} vanish for sufficiently large \(p\), proving Theorem~\ref{mainthm:witten}.
Theorem~\ref{mainthm:D} is proven in Section~\ref{cobordism}, using the results of \cite{petersen-wink} to show that all but two Pontryagin numbers vanish, and then showing that \(\hat{A}(M)\) and $\hat A(M,TM_\C)$, which vanish by Lichnerowicz and Theorem~\ref{mainthm:A}, are linearly independent in those two remaining Pontryagin numbers. 
In Section~\ref{section: ex}, we examine examples of closed manifolds with \(C_1(R)>0\), including products of spheres, symmetric spaces, and Milnor hypersurfaces, proving Theorem~\ref{A-props}. We also compute \(C_p(R)\) for compact rank one symmetric spaces, examine surgery stability of \(C_p(R)>0\) using the criterion of \cite{hoelzel}, and identify nontrivial examples for which \(C_p(R)>0\) for the values of \(p\) given in Theorem~\ref{mainthm:witten}.

\subsection*{Acknowledgements} It is our great pleasure to thank Emilio Lauret and John Lott for assisting with representation theory questions and conversations about twisted Dirac operators, respectively. The first-named author is supported by the National Science Foundation, through grant DMS-1904342 and CAREER grant DMS-2142575. The second-named author is supported by the National Science Foundation, through grant DMS-2001985. We also thank the anonymous referee for the careful reading of our paper and thoughtful suggestions for improvements.

\section{Preliminaries}

In order to fix notation and conventions, we recall some basic facts from topology and representation theory; for details, see \cite{HBJ} and \cite{goodman-wallach}, respectively.

\subsection{Cobordisms, Pontryagin numbers, and genera}
Let \(n=4k\) and consider a multiindex \(I=(i_1, \dots ,i_\ell)\) of nonnegative integers such that \(\sum_{j=1}^\ell i_j=k.\) Given a closed oriented \(n\)-manifold \(M\), the Pontryagin number \(p_I(M)\) is defined as 
\[p_I(M)=\left<p_{i_1}(TM) \cdots p_{i_\ell}(TM),[M]\right>\in\Z,\]
where $p_{i}(TM)\in H^{4i}(M,\Z)$ are the Pontryagin classes of $TM\to M$.
If \(M\) and \(N\) are oriented cobordant, then \(p_I(M)=p_I(N)\), and \(p_I\) defines a map 
\[p_I\colon \Omega_n^\SO\longrightarrow\Z,\]
where \(\Omega^\SO_n\) is the oriented cobordism group.
If \(p(k)\) is the number of partitions of \(k\), and \(I_1, \dots ,I_{p(k)}\) is a list of those partitions, then, by the work of Thom  \cite{thom}, 
\begin{equation*}
(p_{I_1}, \dots ,p_{I_{p(k)}})\colon \Omega^\SO_n\otimes \mathds{Q}\longrightarrow\mathds{Q}^{p(k)}    
\end{equation*}
is an isomorphism.  Thus, if all Pontryagin numbers of an oriented manifold \(M\) vanish, then \(M\) is \emph{rationally null-cobordant}, that is, the disjoint union (or connected sum) of some number of copies of \(M\), all with the same orientation, bounds an oriented manifold.
It also follows from work of Thom that the natural map 
\begin{equation*} 
    \Omega^\Spin_n\otimes\mathds{Q}\longrightarrow\Omega^\SO_n\otimes\mathds{Q}
\end{equation*}
is an isomorphism, where \(\Omega^\Spin_n\) is the spin cobordism group. So, if \(M\) is spin and rationally null-cobordant, some number of copies of \(M\) bound a spin manifold as well. Accordingly, we shall refer to rationally null-cobordant manifolds without distinguishing between the spin and oriented cases. 

Let \(Q(x)=1+a_2x^2+a_4x^4+ \dots \) be an even formal power series. For variables \(x_1, \dots,x_m\), the product \(Q(x_1) \cdots Q(x_m)\) is a formal sum $K_0+K_1+K_2+ \dots$ of homogeneous symmetric polynomials \(K_i\) of degree \(i\) in the variables \(x_1^2, \dots ,x_m^2.\) Each \(K_i\) can be written as a polynomial \(K_i(\sigma_1, \dots ,\sigma_i)\) in the elementary symmetric polynomials \(\sigma_i=\sigma_i(x_1^2, \dots ,x_m^2).\) Note that \(K_i\) does not depend on \(m\) if \(i\leq m.\)  Then, we define 
\begin{align*}
K(TM)_i&=K_i\big(p_1(TM), \dots,p_i(TM)\big),\\
K(TM)&=1+K(TM)_1+K(TM)_2+ \dots
\end{align*}
For a closed orientable manifold \(M^{4k}\), set
\[K(M)=\left<K(TM)_{k},[M]\right>.\]
Thus, \(K(M)\) is a rational linear combination of Pontryagin numbers, and one checks that \(K(M\times N)=K(M)K(N),\) so $K$ defines a ring homomorphism
\[K\colon \Omega_*^{\SO}\otimes\mathds{Q}\longrightarrow\mathds{Q},\] which is called a \emph{(multiplicative) genus}. Let us mention the following examples:
\begin{enumerate}[\rm (i)]
	\item if \(Q(x)=\frac{x}{\tanh{x}}\), then the corresponding genus is called \(L\). The signature of an orientable manifold \(M\) is equal to \(L(M)\), by Hirzebruch's Signature Theorem;
	\item if \(Q(x)=\frac{x/2}{\sinh(x/2)}\), then the corresponding genus is called \(\hat{A}\). By the Atiyah--Singer Index Theorem, the index of the Dirac operator on a closed spin manifold $M$ is equal to \(\hat{A}(M)\);
	\item if \(Q(x)=1+x^{2k}\), then the corresponding genus is the \emph {Milnor invariant} \(s_{k}\).
\end{enumerate}

The latter is useful to describe the ring $\Omega^\SO_*$ due to theorems of Thom and Milnor:

\begin{theorem}\label{thom-milnor} Let \(M^{4k}\) be a closed oriented manifold for each positive integer \(k.\)  
	\begin{enumerate}[\rm (i)]
		\item \cite{thom} If \(s_{k}(M^{4k})\neq0\) for all \(k\), then \(\Omega_*^\SO\otimes\mathds Q=\mathds Q[M^4,M^8,\dots]\)
		\item {\cite[p.\ 207]{stong}} If
		\[s_{k}(M^{4k})=
		\begin{cases} \pm p,&\text{ if }2k+1=p^r,\ p\text{ a prime,}\\
		\pm 1,& \text{ otherwise,}\end{cases}\]
		then \(\Omega_*^\SO/\text{\normalfont torsion}=\Z[M^4,M^8,\dots].\)
		\end{enumerate}
\end{theorem}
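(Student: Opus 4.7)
The plan is to leverage the \emph{multiplicativity} of $s_k$ on products together with Thom's description of the rational cobordism ring already invoked in the excerpt. The key preliminary is the \emph{primitivity} of $s_k$: namely, $s_k(M \times N) = 0$ whenever $\dim M, \dim N > 0$ and $\dim M + \dim N = 4k$. This follows from a direct computation with the defining series $Q(x) = 1 + x^{2k}$, whose degree-$4k$ component in $\prod_i Q(x_i)$ is the power sum $\sum_i x_i^{2k}$. Since the Pontryagin roots of $T(M\times N)$ are the union of those of $TM$ and $TN$, the power sum decomposes as a sum of classes pulled back from each factor; but a class pulled back from $M$ has degree at most $\dim M < 4k$, hence vanishes when paired with $[M\times N]$, and similarly for $N$.

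Granting primitivity, part (i) is immediate. Thom's theorem gives $\Omega_*^\SO \otimes \mathds{Q} \cong \mathds{Q}[y_1, y_2, \dots]$ with $|y_k|=4k$, so the quotient by decomposables in degree $4k$ is one-dimensional over $\mathds{Q}$; primitivity ensures $s_k$ descends to a $\mathds{Q}$-linear functional on this quotient. A manifold $M^{4k}$ with $s_k(M^{4k}) \neq 0$ therefore has nonzero image in the indecomposables and may be used as the $k$-th polynomial generator.

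For part (ii), I would invoke the sharper theorem of Milnor and Wall that $\Omega_*^\SO/\text{torsion}$ is itself a polynomial ring $\Z[x_1,x_2,\dots]$ with $|x_k|=4k$. By primitivity, $s_k$ descends to a homomorphism from the indecomposables $\Z$ to $\Z$, and $[M^{4k}]$ is a polynomial generator iff its $s_k$-value equals $\pm$(positive generator of the image). The theorem thus reduces to identifying this image as $p\Z$ when $2k+1 = p^r$ for some prime $p$, and as $\Z$ otherwise.

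The hard part is this last identification, which is the content of the Stong reference. It decomposes into attainability and divisibility. For attainability, one uses $\C P^{2k}$ (with $s_k(\C P^{2k}) = 2k+1$) together with Milnor hypersurfaces $H_{i,j} \subset \C P^i \times \C P^j$ of bidegree $(1,1)$, whose $s_k$-values are given by explicit binomial expressions; an appropriate integer combination of classes of products of these yields the required value $\pm p$ or $\pm 1$. For the divisibility lower bound, one converts the power sum $s_k$ into a polynomial in elementary symmetric Pontryagin classes via Newton's identities; the denominators appearing are controlled by the prime factorization of $2k+1$, ultimately forcing $p \mid s_k(M^{4k})$ for any closed oriented $4k$-manifold when $2k+1 = p^r$. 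This number-theoretic step is the genuinely delicate input, but is essentially Milnor's classical computation.
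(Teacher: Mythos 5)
The paper does not actually prove this statement: it is quoted as a classical theorem with citations to Thom and to Stong's book, so there is no in-paper argument to compare against. Your reduction is the standard one and is essentially correct where you give details: primitivity of $s_k$ (vanishing on nontrivial products) is right and proved correctly; part (i) then follows from Thom's computation that $\Omega_*^\SO\otimes\mathds{Q}$ is a polynomial ring with one generator in each degree $4k$, via the indecomposable quotient; and part (ii) correctly reduces, via Milnor's theorem that $\Omega_*^\SO/\text{torsion}$ is a polynomial ring over $\Z$, to identifying the image $m_k\Z$ of $s_k$ on the indecomposables. Note that under the stated hypothesis you only need the divisibility half of that identification: attainability is supplied by the hypothesized manifolds themselves (and in the non-prime-power case $s_k(M)=\pm1$ already forces $m_k=1$), so everything hinges on showing $p\mid s_k(N^{4k})$ for \emph{every} closed oriented $N^{4k}$ when $2k+1=p^r$.

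That is where your sketch has a genuine gap: the claim that this divisibility comes from ``Newton's identities, with denominators controlled by the prime factorization of $2k+1$'' cannot work. Newton's identities express the power sum $s_k$ as an \emph{integral} polynomial in the elementary symmetric functions, i.e.\ in the Pontryagin classes, so no denominators appear, and no purely symmetric-function manipulation can produce a mod-$p$ congruence. Indeed the congruence is special to tangent bundles of closed manifolds: for $k=1$, $2k+1=3$, one has $p_1(TM)[M]=3\,\operatorname{sign}(M)$ by the signature theorem, whereas for an arbitrary bundle the analogous number need not be divisible by $3$ (e.g.\ $\langle c_1(\mathcal O(1))^2,[\C P^2]\rangle=1$). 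The divisibility $p\mid s_k(N^{4k})$ for $2k+1=p^r$ is a cobordism integrality theorem, proved by Milnor/Stong using genuinely topological input (Steenrod reduced powers acting on the Thom class of $M\SO$, or equivalently the mod-$p$ and $KO$-theoretic characteristic-number structure of $\Omega_*^\SO$), not by manipulating symmetric polynomials. If you intend simply to cite this step, as the paper does, your outline is fine; if you intend the Newton-identity argument as a proof of it, that step fails and is precisely the nontrivial content of the reference.
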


\begin{example}\label{ex:generators-cobordism}
Useful generating sets can be constructed using Theorem~\ref{thom-milnor} and the computations  \(s_{k}(\C P^{2k})=2k+1\), and \(s_{k}(\mathds H P^{k})=2k+2-4^k,\) see \cite[Sec.~4.1]{HBJ} and \cite{gromov-lawson}. Furthermore, for each \(i,j\geq 2\), Milnor defined a \((2i+2j-2)\)-manifold \(H_{ij}\) which is a complex hypersurface of degree \((1,1)\) in \(\C P^i\times \C P^j\),  is the total space of a \(\C P^{j-1}\) bundle over \(\C P^{i},\) and for \(i+j\) odd satisfies \(s_{(i+j-1)/2}(H_{ij})=-\binom{i+j}{i}\), see \cite[p.\ 39]{HBJ}. One checks using divisibility properties of binomial coefficients that \(\Omega_*^\SO/\text{\normalfont torsion}\) is generated by \(\C P^{2k}\) and \(H_{ij}\).	
\end{example}

We describe the Chern character $\operatorname{ch}(E_\C)$ of the complexification $E_\C$ of a real vector bundle \(E\to M\) following a similar procedure. Consider the decomposition 
\[\textstyle\sum\limits_{i=1}^m2\cosh(x_i)=2m+\operatorname{ch}_2(\sigma_1)+\operatorname{ch}_4(\sigma_1,\sigma_2)+ \dots \]
into elementary symmetric polynomials \(\sigma_i=\sigma_i(x_1^2,\dots,x_m^2)\), and set 
\begin{align*}
\operatorname{ch}_{2i}(E_\C)&=\operatorname{ch}_{2i}\!\big(p_1(E), \dots,p_i(E)\big)\\
\operatorname{ch}(E_\C)&=\dim E+\operatorname{ch}_2(E)+\operatorname{ch}_4(E)+ \dots
\end{align*}
It follows that \(\operatorname{ch}(E\oplus F)=\operatorname{ch}(E)+\operatorname{ch}(F)\), and \(\operatorname{ch}(E\otimes F)=\operatorname{ch}(E)\operatorname{ch}(F)\).
Note 
\[\operatorname{ch}_{2i}(\sigma_1, \dots ,\sigma_i)=\frac2{(2i)!}\textstyle\sum\limits_{j=1}^mx_j^{2i},\]
so \(\operatorname{ch}_i(\cdot)\) can be computed using the relations between power sums and elementary symmetric polynomials given by Newton's identities.  

If \(E\) is associated to the frame bundle of \(M\) by a representation \(\pi\) of \(\SO(2m)\), let \(\{\pm\lambda_j\}\) be the set of weights of the complexification of \(\dd\pi\).  We consider each weight to be a linear functional \(\lambda_j\colon \C^m\to\C.\)  If we identify \(x_1,\dots,x_m\) with coordinates of \(\C^m\), then we can identify
\begin{equation}\label{eq:ch-weights}
\operatorname{ch}(E_\C)=\textstyle\sum\limits_j 2\cosh\!\big(\lambda_j(x_1,\dots,x_m)\big).
\end{equation}
Since the nonzero weights appear in pairs \(\pm\lambda_j\), only even powers of \(x_1,\dots,x_m\) remain. As the set of weights is invariant under the Weyl group of \(\SO(2m)\), and thus permutations of \(x_1,\dots,x_m\), it follows that \(\operatorname{ch}(E_\C)\) can be expressed in terms of symmetric polynomials \(\sigma_i(x_1^2,\dots,x_m^2)\), which are in turn identified with \(p_i(TM).\)

The Atiyah--Singer Index Theorem states that
if $M$ is a closed spin manifold and \(E\to M\) is  a complex vector bundle, the index of the Dirac operator on spinors of $M$ twisted with \(E\) is given by 
\begin{equation}\label{eq:AME}
	\hat{A}(M,E)=\big<\hat{A}(TM)\cdot \operatorname{ch}(E),[M]\big>,
\end{equation} 
see e.g.~\cite[Thm.~13.10]{lm-book}. Note that \(\hat{A}(M,E\oplus F)=\hat{A}(M,E)+\hat{A}(M,F)\). 

\subsection{Modular forms}\label{sec: modular forms}
Denote by \(\mathfrak{H}\subset \C\) the open upper half-plane, and let \(\Gamma\subset \SL(2,\Z)\) be a subgroup of finite index.  A \emph{modular form of weight} \(k\in\Z\) on \(\Gamma\) is a holomorphic function \(f\colon \mathfrak{H}\to\C\) obeying the equivariance property 
\[\phantom{\text{for} \begin{pmatrix} a & b\\ c&d\end{pmatrix} \in\Gamma,}
f\left(\frac{a\tau+b}{c\tau+d}\right)=(c\tau+d)^kf(\tau), \quad \text{for all } \begin{pmatrix} a & b\\ c&d\end{pmatrix} \in \Gamma.\]
Furthermore, for all \(\begin{psmallmatrix} a & b\\ c&d\end{psmallmatrix} \in \SL(2,\Z)\), the function 
\begin{equation}\label{eq: expansion}\tau\mapsto (c\tau+d)^{-k}f\left(\frac{a\tau+b}{c\tau+d}\right)\end{equation}
must have a Fourier expansion of the form \(\sum_{\ell=0}^\infty a_\ell e^{2\pi i \tau \ell/N}\) for some positive integer \(N\). For \(\begin{psmallmatrix} a & b\\ c&d\end{psmallmatrix}=\begin{psmallmatrix} 0 & -1\\ 1&0\end{psmallmatrix}\), that Fourier expansion is called the \emph{expansion at 0}, and we define \(\textrm{ord}_0(f)=\ell_0/N\), where \(\ell_0\) is the smallest integer such that \(a_{\ell_0}\neq 0\) in the expansion at 0. Similarly, \(\textrm{ord}_\infty(f)\) is defined using the Fourier expansion of \(f(\tau)\), i.e., choosing \(\begin{psmallmatrix} a & b\\ c&d\end{psmallmatrix}=\begin{psmallmatrix} 1 & 0\\ 0&1\end{psmallmatrix}\). For \(\tau\in\mathfrak{H}\), the order of vanishing of \(f\) at \(\tau\) is defined in the usual manner and denoted by \(\textrm{ord}_\tau(f)\). 
The vector space of modular forms of weight \(k\) on \(\Gamma\) is denoted \(M_k(\Gamma)\); taken together, they form an algebra \(M_*(\Gamma)=\bigoplus_{k}M_k(\Gamma).\)  

For a vector bundle \(E\), consider the formal power series in the variable $t$,
\begin{equation}\label{eq:symwedget}
\begin{aligned}
\Sym_tE&=1+E\,t+\Sym^2E\, t^2+ \dots \\	
\wedge_tE&=1+E\,t+\wedge^2E\, t^2+ \dots
\end{aligned}
\end{equation}
with coefficients given by the symmetric and exterior powers of \(E\). 

The \emph{Witten genus} of a closed oriented manifold \(M^{4k}\) is the formal power series 
\begin{equation}\label{eq:witten-genus}
\varphi_W(M)=\textstyle \hat{A}\left(\!M,\;\bigotimes\limits_{\ell=1}^\infty\Sym_{q^\ell}TM_\C\right)\prod\limits_{\ell=1}^\infty(1-q^\ell)^{4k}.
\end{equation}
The notation indicates that we apply \(\hat{A}(M,\cdot)\) to the coefficients of the given formal power series of bundles, cf.~\eqref{eq:AME}. If \(M^{4k}\) is a closed spin manifold with \(p_1(TM)=0\), then for \(q=e^{2\pi i \tau}\), the series \eqref{eq:witten-genus} is the Fourier expansion of a modular form \(\varphi_W(M)(\tau)\) of weight \(2k\) on \(\SL(2,\Z)\), see \cite[Sec.~6.3]{HBJ}. 

The \emph{elliptic genus} of a closed oriented manifold \(M^{4k}\) is the formal power series
\[\varphi(M)=\textstyle\left(\!2\!\prod\limits_{\ell=1}^\infty\frac{(1-q^\ell)^2}{(1+q^\ell)^2}\!\right)^{2k}\!\!\left<\!L(TM)\cdot\operatorname{ch}\!\left(\!\Psi_2\!\left(\bigotimes\limits_{\ell=1}^\infty\Sym_{q^\ell}TM_\C \otimes \wedge_{q^\ell}TM_\C\!\right)\!\!\right)\!,[M]\!\right>,\]
where $\Psi_2$ is an Adams operation, see \cite[p.~75]{HBJ}. Again setting \(q=e^{2\pi i \tau}\), the series \(\varphi(M)(\tau)\) is a modular form of weight \(2k\) on 
\begin{equation}\label{eq: gamma0}
\Gamma_0(2)=\left\{\begin{pmatrix}a&b\\c&d\end{pmatrix} \in \SL(2,{\Z}) : c\equiv 0\mod 2\right\}.
\end{equation}
Here, however, we make no restriction on \(p_1(TM)\). Indeed, the elliptic genus defines a ring homomorphism \(\varphi\colon \Omega_*^{\SO}\otimes\mathds{Q}\to M_*(\Gamma_0(2)).\)

Furthermore, the modified elliptic genus
\begin{equation}\label{eq: phi_phi_tilde}
\widetilde\varphi(M)(\tau)=\tau^{-2k}\varphi(M)\left(-\tfrac{1}{\tau}\right)
\end{equation}
has a Fourier expansion with \(N=2\), which is the expansion at 0 of \(\varphi\). Indeed, setting \(q=e^{2\pi i \tau}\) as before, one obtains
\begin{equation}\label{tildephi}
\widetilde\varphi(M)(2\tau)=\textstyle\left(\prod\limits_{\ell=1}^\infty\frac{(1-q^{2\ell})^4}{(1-q^\ell)^2}\right)^{2k}\!\hat{A}\left(\!M,\,\bigotimes\limits_{\ell=1}^\infty\wedge_{-q^{2\ell-1}}TM_\C\otimes\Sym_{q^{2\ell}}TM_\C\right)
\end{equation}
and the function \(\tau\mapsto \widetilde\varphi(M)(2\tau)\) is again an element of \(M_{2k}(\Gamma_0(2))\). Note that 
\begin{equation}\label{eq:phi-interp}
\lim_{t\to\infty}\varphi(M)(it)=4^kL(M), \quad\text{and}\quad \lim_{t\to\infty}(it)^{-2k}\varphi(M)\left(-\tfrac{1}{it}\right)=\hat{A}(M),	
\end{equation}
so \(\varphi(M)\) interpolates between $L$ and $\hat A$; for details, see \cite[Sec.~6.1, 6.2]{HBJ}.

\subsection{Surgery stability}
Given a manifold \(M^n\) with an embedding of \(\Ss^{n-d}\times D^d\), that is, an embedded sphere with a trivialization of its normal bundle, we can remove the embedded submanifold and glue the result to \(D^{n-d+1}\times \Ss^{d-1},\) forming 
\[N=(M\setminus \Ss^{n-d}\times D^d)\cup_{\Ss^{n-d}\times \Ss^{d-1}}D^{n-d+1}\times \Ss^{d-1},\]
which is cobordant to the original manifold $M$.
This process is referred to as a \emph{surgery of dimension} \(n-d\), or \emph{codimension }\(d\).
Surgery of dimension \(n-d\) decreases the Betti number \(b_{n-d}\) if the embedding \(\Ss^{n-d}\subset M\) is nontrivial in rational homology, and increases \(b_{n-d+1}\) if \(\Ss^{n-d}\subset M\) is trivial in rational homology.

If \(M\) has $\scal>0$ and \(d\geq 3\), then \(N\) also admits a Riemannian metric with $\scal>0$, by the celebrated works~\cite{schoen-yau,gromov-lawson}. In general, a curvature condition \(C\) is called \emph{stable under surgeries of codimension \(d\)} if \(N\) admits a metric satisfying \(C\) whenever it can be constructed using surgery of codimension \(d\) from a manifold \(M\) satisfying \(C\). Let \(\Sym^2_b(\wedge^2\R^n)\) be the space of algebraic curvature operators, that is, symmetric endomorphisms \(R\colon\wedge^2\R^n\to\wedge^2\R^n\) that satisfy the first Bianchi identity.  
The following is a far-reaching generalization of surgery stability for $\scal>0$.

\begin{theorem}[Hoelzel~\cite{hoelzel}]\label{hoelzel}
Let \(C\) be an open convex \(\mathsf O(n)\)-invariant cone in \(\Sym^2_b(\wedge^2\R^n)\), and $R_d$ be the curvature operator of \(\R^{n-d+1}\times \Ss^{d-1},\) \(3\leq d\leq n\), with its standard product metric. If \(R_d\in C\), then the condition \(R\in C\) is stable under surgeries of codimension \(d\).
\end{theorem}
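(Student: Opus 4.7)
The plan is to mimic the Gromov--Lawson/Schoen--Yau surgery construction, carrying out the deformation entirely inside the convex cone \(C\). First, using a tubular neighborhood of the embedded sphere \(\Ss^{n-d}\subset M\) with trivialized normal bundle, I would perform a small \(C^2\)-perturbation of the metric \(\g\) so that, on a smaller tube, it becomes a Riemannian product \(\g_{\Ss^{n-d}}\times\g_{\mathrm{flat}}\) with the round metric of some radius on the sphere and the flat Euclidean metric on \(D^d\). Since \(C\) is open and the perturbation can be made arbitrarily \(C^2\)-small, the curvature operator of the deformed metric remains in \(C\) pointwise.

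Next, on the tubular piece \(\Ss^{n-d}\times D^d\), I would replace the flat \(D^d\)-factor by a rotationally symmetric warped model built from a profile curve \(\gamma(t)=(r(t),s(t))\) in the half-plane \(\{(r,s):r\geq 0\}\). This curve starts tangent to the \(r\)-axis (reproducing the original product metric near the boundary of the tube) and ends tangent to a horizontal line at height \(s=s_\infty\), producing the standard handle metric on \(D^{n-d+1}\times\Ss^{d-1}\). The resulting metric on the surgered neighborhood is a doubly warped product whose curvature operator decomposes, in an orthonormal frame adapted to the spherical, normal, and longitudinal directions of \(\gamma\), into blocks with explicit entries depending only on \(r,s,\dot r,\dot s,\ddot r,\ddot s\) and on the curvature of the original product metric.

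The key step is to show that, for a suitably chosen profile \(\gamma\), the resulting curvature operator lies in \(C\) at every point of the handle. The crucial observation is that, up to an overall positive rescaling, this curvature operator can be written as a convex combination of two algebraic curvature operators: (a) the curvature operator of \(\Ss^{n-d}\times\R^d\) at the appropriate radius (which is a rescaling of the curvature at a nearby point of \(M\), hence in \(C\)), and (b) the curvature operator \(R_d\) of \(\R^{n-d+1}\times\Ss^{d-1}\), which lies in \(C\) by hypothesis. After applying an orthogonal change of frame aligning the spherical and normal subspaces, \(\mathsf O(n)\)-invariance of \(C\) lets us compare these operators in a common frame. The convexity and cone property of \(C\) then force the combination to stay in \(C\), provided the error terms coming from \(\ddot r,\ddot s\) and mixed sectional terms are negligible; this can be arranged by making \(\gamma\) bend slowly enough, and by openness of \(C\) the deformed curvature will remain in \(C\) throughout.

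The main obstacle is precisely this last point: carrying out the curvature computation for the warped product explicitly, controlling the mixed curvature terms (which do not obviously correspond to either (a) or (b)), and quantifying "slow enough" so that the dominant part of the curvature at each point is a genuine convex combination of operators already known to be in \(C\). Here the openness and \(\mathsf O(n)\)-invariance of \(C\) are both used in an essential way, and the fact that \(C\) is a cone (not merely convex) allows the freedom to rescale independently in different regions of the handle. Once this uniform estimate is established, one can join the deformed metric on the handle smoothly to the unaltered metric on \(M\setminus(\Ss^{n-d}\times D^d)\), yielding a metric on \(N\) whose curvature operator lies in \(C\) everywhere.
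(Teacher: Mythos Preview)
The paper does not prove this theorem: it is quoted verbatim from Hoelzel~\cite{hoelzel} and used as a black box (in the proof of \Cref{A-products}). There is therefore no ``paper's own proof'' to compare your proposal against; what you have written is a sketch of Hoelzel's original argument, not of anything in this paper.

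That said, a few comments on your sketch itself. The overall strategy---a Gromov--Lawson-type neck-bending argument, using openness, convexity, and \(\mathsf O(n)\)-invariance of \(C\) together with the hypothesis \(R_d\in C\)---is indeed the correct one, and is what Hoelzel does. However, two steps in your outline are not quite right as stated. First, you cannot in general make a \(C^2\)-small perturbation so that the metric becomes an \emph{exact} Riemannian product \(\g_{\Ss^{n-d}}\times\g_{\mathrm{flat}}\) near the sphere: the embedded \(\Ss^{n-d}\) need not be totally geodesic, nor carry the round metric, nor have flat normal connection. The actual construction works directly with the original metric in normal exponential coordinates, where it is only \emph{approximately} a product; the error terms (second fundamental form, normal curvature) are then absorbed by openness of \(C\) once the neck radius is small enough. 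Second, the curvature operator along the bent neck is not literally a convex combination of the two model operators (a) and (b): there are genuine mixed terms and geodesic-curvature terms from the profile \(\gamma\). The correct statement is that, after rescaling, the curvature operator along the neck \emph{converges} to \(R_d\) as the neck radius tends to zero, and lies in a compact subset of \(C\) throughout the transition; convexity is used in the gluing/interpolation region rather than pointwise along the whole neck. These are exactly the ``main obstacles'' you flag at the end, and resolving them is the substance of Hoelzel's paper.
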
        

\subsection{Representation theory}\label{sec: rep-theory}
Let $\G\subset\SO(n)$ be a connected compact real Lie subgroup with Lie algebra $\mathfrak g\subset\mathfrak{so}(n)$.
An irreducible complex $\G$-representation $\pi\colon \G\to\operatorname{Aut}(E)$ is called of {real},  {complex}, or  {quaternionic} type according to whether it arises from a real $\G$-representation by extension of scalars (\emph{real type}), from a quaternionic $\G$-representation by restriction of scalars (\emph{quaternionic type}), or none of the above (\emph{complex type}). Real and quaternionic types are respectively equivalent to the existence of a conjugate-linear endomorphism which squares to $+\operatorname{Id}$ and $-\operatorname{Id}$.
In particular, an irreducible real representation $\pi\colon\G\to\operatorname{Aut}(E)$ is such that the complexified representation $\pi\colon\G\to\operatorname{Aut}(E_\C)$ is irreducible if and only if the latter is of real type.
If, instead, $\pi\colon\G\to\operatorname{Aut}(E_\C)$ is reducible, then $E_\C\cong V\oplus V^*$ for some irreducible $\G$-representation $V$, which satisfies $V^*\cong V$ if and only if $\pi$ is of quaternionic type. 

Consider the complexification $\G_\C\subset \SO(n,\C)$, whose Lie algebra is $\mathfrak g_\C\subset\sonc$. Given a (real or complex) representation $\pi\colon\G\to\operatorname{Aut}(E)$, we extend its linearization $\dd\pi\colon\mathfrak g\to\operatorname{End}(E)$ to a $\mathfrak g_\C$-representation also denoted $\dd\pi$.
Throughout this paper, $\sonc\cong \wedge^2\C^n$ and its Lie subalgebras are endowed with the inner product 
$\langle X,Y\rangle =\frac12\operatorname{Re} \operatorname{tr} XY^*$.
Fix a Cartan subalgebra $\mathfrak h\subset\mathfrak g_\C$, and identify the subspace $\mathfrak h_0^*\subset\mathfrak h^*$ spanned by the roots of $\mathfrak g_\C$ with a subspace $\mathfrak h_0\subset\mathfrak h$. Given a choice of simple roots, let $\omega_\ell$, $1\leq \ell\leq \operatorname{rk}(\mathfrak g)$, be the \emph{fundamental weights} of $\mathfrak g_\C$, i.e., the basis of $\mathfrak h_0$ dual to the basis of coroots. Let $\rho$ be the \emph{half-sum of positive roots} in $\mathfrak g_\C$, also called the \emph{Weyl vector}, and recall that $\rho=\sum_{\ell=1}^{\operatorname{rk}(\mathfrak g)} \omega_\ell$. Note that $w_0\,\rho=-\rho$, where $w_0$ the unique element of the Weyl group that sends the positive Weyl chamber to the negative one.
If $\lambda\in\mathfrak h^*$ is the highest weight of a $\mathsf G_\C$-representation $V$, then the highest weight of the dual representation $V^*$ is $-w_0\,\lambda$. Thus, for simplicity, we shall refer to the \emph{highest weight} $\lambda\in\mathfrak h^*$ of $\pi\colon\G\to\operatorname{Aut}(E)$ as being the highest weight of $E$ if it is complex, of $E_\C$ if $E$ is real and $E_\C$ is of real type, and of the complex $\G$-representation $V$ such that $E_\C\cong V\oplus V^*$ if $E$ is real and $E_\C$ is of complex or quaternionic type. There will be no ambiguity in the latter case, since throughout the paper we only use the quantities $\|\lambda\|^2=\|w_0\,\lambda\|^2$ and $\langle\lambda,\lambda+2\rho\rangle=\langle-w_0\,\lambda,-w_0\,\lambda+2\rho\rangle$ associated to $\lambda$.  

By the Highest Weight Theorem, there is a bijection between finite-dimensional irreducible representations of $\mathfrak g_\C$ and the set $P_{++}(\mathfrak g_\C)$ of dominant $\mathfrak g_\C$-integral weights. Given $\lambda\in P_{++}(\mathfrak g_\C)$, we denote by $\dd\pi_\lambda\colon\mathfrak g_\C\to\operatorname{End}(E)$ the unique (up to isomorphisms) irreducible $\mathfrak g_\C$-representation with highest weight $\lambda$.
Dominant $\G_\C$-integral weights form a sublattice $P_{++}(\G_\C)\subset P_{++}(\mathfrak g_\C)$, and given $\lambda\in P_{++}(\G_\C)$, we denote by $\pi_\lambda\colon\G_\C\to\operatorname{Aut}(E)$ the unique (up to isomorphisms) irreducible $\G_\C$-representation whose linearization is $\dd\pi_\lambda\colon\mathfrak g_\C\to\operatorname{End}(E)$. 

Given an orthonormal basis $\{\alpha_i\}$ of $\mathfrak g_\C$, define the \emph{Casimir element} $\Cas=\sum_i \alpha_i^2$ in the universal enveloping algebra $\mathcal U(\mathfrak g_\C)$. Since $\Cas$ lies in the center of $\mathcal U(\mathfrak g_\C)$, by Schur's Lemma, given an irreducible $\mathfrak g_\C$-representation $\dd\pi_\lambda$ on the vector space $E$, the operator $\dd\pi_\lambda(-\Cas)=-\sum_i \dd\pi_\lambda(\alpha_i)^2$ acts on $E$ as multiplication by a scalar, which is equal to $\langle \lambda,\lambda+2\rho\rangle$ by Freudenthal's formula, see e.g.~\cite[Lem.~5.6.4]{wallach-book}.

\begin{example}[Type $D_m$]\label{ex:reptheorySO2m}
Consider the Lie groups $\SO(2m)$ and $\Spin(2m)$, of rank $\operatorname{rk}(\G)=m\geq3$, whose complexified Lie algebras are isomorphic to $\mathfrak g_\C=\mathfrak{so}(2m,\C)$. Fix the Cartan subalgebra $\mathfrak h=\{ H(\theta_1,\dots,\theta_m)\in\mathfrak g_\C : \theta_j\in\C \}$, where 
\begin{equation*}
H(\theta_1,\dots,\theta_m):=\diag\left(
\begin{bsmallmatrix}
0 & \theta_1\\
-\theta_1 & 0
\end{bsmallmatrix},\begin{bsmallmatrix}
0 & \theta_2\\
-\theta_2 & 0
\end{bsmallmatrix},\dots,\begin{bsmallmatrix}
0 & \theta_m\\
-\theta_m & 0
\end{bsmallmatrix} \right),
\end{equation*}
and let $\varepsilon_i\in\mathfrak h^*$ be the functionals defined by $\varepsilon_i(H(\theta_1,\dots,\theta_m))=\theta_i$.
According to the fixed inner product $\langle\cdot,\cdot\rangle$ in $\mathfrak{so}(2m,\C)$, we have
$\langle\varepsilon_i,\varepsilon_j\rangle=\delta_{ij}$. Note that $\mathfrak h_0=\mathfrak h$. 
We choose $\varepsilon_i\pm \varepsilon_j$, $i<j$, as positive roots,
and $\varepsilon_1-\varepsilon_2,\dots, \varepsilon_{m-1}-\varepsilon_m, \varepsilon_{m-1}+\varepsilon_m$ as simple roots.
The fundamental weights $\omega_1,\dots,\omega_m$ are given by:
\begin{equation*}
\omega_\ell=\begin{cases}
\varepsilon_1+\dots+\varepsilon_\ell, & \text{ if } 1\leq \ell\leq m-2,\\
\tfrac{1}{2}\big( \varepsilon_1+\dots+\varepsilon_{m-1}-\varepsilon_{m} \big), & \text{ if } \ell = m-1,\\
\tfrac{1}{2}\big( \varepsilon_1+\dots+\varepsilon_{m-1}+\varepsilon_{m} \big), & \text{ if } \ell = m,
\end{cases} 
\end{equation*}
and hence the half-sum of positive roots (also called the Weyl vector) is given by
\begin{equation}\label{eq:rho}
\rho=\textstyle\sum\limits_{i=1}^m (m-i)\,\varepsilon_i.
\end{equation}

The set of dominant $\mathfrak g_\C$-integral weights is
\begin{equation*}
P_{++}(\mathfrak{g}_\C)=\left\{\lambda= \textstyle\sum\limits_{j=1}^m a_j\,\varepsilon_j  : 
\begin{array}{l}
a_j\in\Z, \, \forall j, \; \text{ or } \; a_j+\tfrac12\in\Z, \, \forall j,\\
a_1\geq a_2\geq\dots\geq a_{m-1}\geq|a_m|\geq0
\end{array}
 \right\},
\end{equation*}
and dominant $\SO(2m,\C)$-integral weights $P_{++}(\SO(2m,\C))\subset P_{++}(\mathfrak{g}_\C)$ form an index $2$ sublattice consisting of those elements with $a_j\in\Z$ for all $1\leq j\leq m$. The $\mathfrak g_\C$-representation $\dd\pi_\lambda$ is of complex type if $m$ is odd and $a_{m-1}\neq a_m$, of quaternionic type if $m\equiv 2\mod 4$ and $a_{m-1}+a_m$ is odd, and of real type otherwise. 

Let us recall certain representations in terms of irreducible representations $\pi_\lambda$, with $\lambda\in P_{++}(\G_\C)$. First, the defining representation of 
$\SO(2m)$ on $\R^{2m}$ complexifies to the irreducible $\SO(2m,\C)$-representation
$\pi_{\omega_1}\cong\pi_{\varepsilon_1}$ on $\C^{2m}$.
Exterior powers $\wedge^p\C^{2m}$ are irreducible $\SO(2m,\C)$-rep\-resentations of real type for $1\leq p\leq m-1$, and have highest weight $\omega_p=\varepsilon_1+\dots+\varepsilon_{p}$, i.e., $\wedge^p\C^{2m}\cong\pi_{\omega_p}$ if $1\leq p\leq m-2$, and $\wedge^{m-1}\C^{2m}\cong\pi_{\omega_{m-1}+\omega_m}$. However, if $p=m$, then $\wedge^m\C^{2m}\cong\wedge^m_+\C^{2m}\oplus\wedge^m_-\C^{2m}$ is not irreducible: it decomposes into the sum of $\pm1$-eigenspaces of the Hodge star operator $*$, called \emph{self-dual} and \emph{anti-self-dual} parts, which are irreducible and have highest weight $\varepsilon_1+\dots+\varepsilon_{m-1}\pm\varepsilon_{m}$, i.e., $\wedge^m_+\C^{2m}\cong\pi_{2\omega_{m}}$ and $\wedge^m_-\C^{2m}\cong\pi_{2\omega_{m-1}}$. The remaining exterior powers $m<p\leq 2m$ are identified via the isomorphisms $\wedge^{2m-p}\C^{2m}\cong\wedge^p\C^{2m}$, $1\leq p\leq 2m$, given by $*$. Altogether, 
\begin{equation}\label{eq:decomp-wedgep}
	\wedge^p \C^{2m} \cong \wedge^p\pi_{\omega_1}\cong \begin{cases}
	\pi_{\omega_p}, & 1\leq p\leq m-2,\\
	\pi_{\omega_{m-1}+\omega_m}, & p= m-1,\\
	\pi_{2\omega_{m}}\oplus \pi_{2\omega_{m-1}}, & p = m.
\end{cases}
\end{equation}
Traceless symmetric powers $\Sym^p_0\C^{2m}$ are $\SO(2m,\C)$-irreducible of real type for all $p\geq1$, and have highest weight $p\,\omega_1$. Symmetric powers $\Sym^p\C^{2m}$ decompose into the sum of traceless symmetric powers $\Sym^{p-2j}_0\C^{2m}$, $0\leq j\leq p$; or, in symbols:
\begin{equation}\label{eq:decomp-symp}
\Sym^p \C^{2m} \cong \Sym^p\pi_{\omega_1}\cong \textstyle\bigoplus\limits_{j=0}^{\floor{p/2}} \pi_{(p-2j)\omega_1}.
\end{equation}

Precomposing an $\SO(2m)$-representation $\pi \colon \SO(2m)\to \operatorname{Aut}(E)$ with the double cover $\Spin(2m)\to\SO(2m)$ gives rise to a $\Spin(2m)$-representation $\widehat\pi$. Their complexifications have the same highest weight, and the isomorphisms \eqref{eq:decomp-wedgep} and \eqref{eq:decomp-symp} remain valid when these are considered as $\Spin(2m,\C)$-representations. 

A $\Spin(2m,\C)$-representation that is not the lift of any $\SO(2m,\C)$-representation is the \emph{spinor representation} $S=S^+\oplus S^-$, which is the sum of (irreducible) positive and negative ``half'' spinor representations, each of dimension $2^{m-1}$. 
If $m$ is even, then $S^+\cong\pi_{\omega_m}$ and $S^-\cong\pi_{\omega_{m-1}}$ are self-dual and of real type if $m\equiv 0\mod 4$, quaternionic type if $m\equiv 2\mod 4$; while if $m$ is odd, then $S^+\cong\pi_{\omega_{m-1}}$ and $S^-\cong\pi_{\omega_{m}}$ are of complex type and $(S^\pm)^*\cong S^\mp$. For convenience, we often write $\pi_S:=\pi_{\omega_m}\oplus\pi_{\omega_{m-1}}$ for the spinor representation.
\end{example}

\begin{example}[Type $B_m$]\label{ex:reptheorySO2m+1}
Consider $\G=\SO(2m+1)$, which also has $\operatorname{rk}(\G)=m$. We shall use the same notation and same Cartan subalgebra $\mathfrak h\subset\mathfrak{so}(2m,\C)$ from Example~\ref{ex:reptheorySO2m} as a Cartan subalgebra of $\mathfrak g_\C=\mathfrak{so}(2m+1,\C)$ by means of an appropriate embedding $\SO(2m,\C)\subset\SO(2m+1,\C)$.
Choose $\varepsilon_i\pm \varepsilon_j$, $i<j$, and $\varepsilon_i$ as positive roots, and $\varepsilon_1-\varepsilon_2,\dots, \varepsilon_{m-1}-\varepsilon_m, \varepsilon_m$ as simple roots, and recall $\mathfrak h_0=\mathfrak h$. The fundamental weights $\omega_1,\dots,\omega_m$ are given by:
\begin{equation*}
\omega_\ell=\begin{cases}
\varepsilon_1+\dots+\varepsilon_\ell, & \text{ if } 1\leq \ell\leq m-1,\\
\tfrac{1}{2}\big( \varepsilon_1+\dots+\varepsilon_{m-1}+\varepsilon_{m} \big), & \text{ if } \ell = m.
\end{cases}
\end{equation*}
and hence the half-sum of positive roots is given by
\begin{equation}\label{eq:rhoSOodd}
\rho=\textstyle\sum\limits_{i=1}^m (m-i+\frac12)\,\varepsilon_i.
\end{equation}
The set of dominant $\SO(2m+1,\C)$-integral weights in $\mathfrak g_\C$ is
\begin{equation*}
P_{++}\big(\SO(2m+1,\C)\big)=\left\{\lambda= \textstyle\sum\limits_{j=1}^m a_j\,\varepsilon_j  : 
a_j\in\Z, \, \forall j, \; a_1\geq\dots\geq a_m\geq0 \right\}.
\end{equation*}
Similarly to Example~\ref{ex:reptheorySO2m},  $\C^{2m+1}\cong \pi_{\omega_1}\cong\pi_{\varepsilon_1}$ is the complexification of the defining representation. Exterior powers are given by $\wedge^p\C^{2m+1}\cong \pi_{\omega_p}$ for $1\leq p\leq m-1$, and $\wedge^m\C^{2m+1}\cong\pi_{2\omega_m}$. The operator $*$ induces isomorphisms $\wedge^{2m+1-p}\C^{2m+1}\cong \wedge^p\C^{2m+1}$. Symmetric powers are given by $\Sym^p_0\C^{2m+1}\cong \pi_{p\omega_1}$ for all $p\geq1$, and $\Sym^p\C^{2m+1}\cong\bigoplus_{j=0}^{\floor{p/2}} \Sym^{p-2j}_0\C^{2m+1}$. All representations above are of real type.
\end{example}

\begin{example}
Let us consider $\U(m)\subset\SO(2m)$, $m\geq2$, which is \emph{not} semisimple, but is reductive, and has $\operatorname{rk}(\U(m))=m-1$.
The complexification of its Lie algebra is $\mathfrak g_\C=\mathfrak{gl}(m,\C)\subset\mathfrak{so}(2m,\C)$, and it splits as $\mathfrak g_\C=\mathfrak g'_\C\oplus \mathfrak z(\mathfrak g_\C)$, where 
$\mathfrak g'_\C=\mathfrak{sl}(m,\C)$ is the semisimple part (of type $A_{m-1}$), and $\mathfrak z(\mathfrak g_\C)=\C \, \mathrm{Id}$ are multiples of the identity.
Fix the Cartan subalgebra $\mathfrak h=\mathfrak h_0\oplus \mathfrak z(\mathfrak g_\C)$, where $\mathfrak h_0=\{ \diag\left(
\theta_1,\dots, \theta_m \right) \in\mathfrak g'_\C : \theta_j\in\C \}$ is the Cartan subalgebra of the semisimple part, which is spanned by the roots of $\mathfrak g_\C$. Using $\varepsilon_j\big(\diag (
\theta_1,
\dots
\theta_m )\big)=\theta_j$, the fundamental weights are given by
\begin{equation*}
\omega_\ell= \varepsilon_1+\dots+\varepsilon_\ell - \tfrac{\ell}{m}(\varepsilon_1+\dots+\varepsilon_m), \quad  1\leq \ell\leq m-1,\\
\end{equation*}
and the half-sum of positive roots  is 
\begin{equation}\label{eq:rhoUm}
    \rho= \tfrac12\textstyle\sum\limits_{j=1}^m (m-2j+1) \varepsilon_j.
\end{equation}
The set of dominant integral weights is given by
\begin{equation*}
P_{++}\big(\G_\C\big)=P_{++}\big(\mathfrak g_\C\big)=\left\{\lambda=\textstyle\sum\limits_{j=1}^m a_j\varepsilon_j  \,:\,  a_j \in\Z\, \text{ and } a_1\geq a_2\geq\dots\geq a_m
 \right\}.
\end{equation*}
\end{example}

\section{Revisiting the Bochner technique with representation theory}
\label{sec:bochner}

In this section, we develop a representation-theoretic framework that 
allows us to prove an abstract Bochner-type result (Theorem~\ref{thm:bochner}) simultaneously generalizing some recent results of Petersen--Wink~\cite{petersen-wink,petersen-wink-cplx,petersen-wink-3}, see Theorems~\ref{thm:pw}, \ref{thm:pwtachibana}, and \ref{thm:pw-cplx}.
We being by recalling the construction of Weitzenb\"ock formulae.

\subsection{Weitzenb\"ock formulae}\label{sec:weitzenbock}
Let $(M^n,\g)$ be an orientable Riemannian manifold, with curvature operator $R\colon \wedge^2TM\to\wedge^2TM$. 
We denote by $\mathrm{Hol}(M^n,\g)$ the holonomy group of $(M^n,\g)$, or its lift to $\Spin(n)$ if $M$ is spin.
Let $\G$ be a connected compact Lie subgroup of $\SO(n)$, or $\Spin(n)$, that contains $\mathrm{Hol}(M^n,\g)$.
Given an orthogonal or unitary representation $\pi\colon\G\to \operatorname{Aut}(E)$, let $E_\pi:=\Fr(M)\times_\pi E$ be the associated bundle to the principal $\G$-bundle $\G\to\Fr(M)\to M$ obtained by reducing the structure group of the bundle of $\SO(n)$-frames, or $\Spin(n)$-frames if $M$ is spin.
For instance, if $\G=\SO(n)$ and $\pi$ is the defining representation on $E=\R^n$, then $E_\pi=TM$. Similarly, the representations $\wedge^p \pi$ and $\Sym^p \pi$ give rise to the bundles $\wedge^p TM$ and $\Sym^p TM$, respectively.

The curvature term in the Weitzenb\"ock formula $\Delta_\pi=\nabla^*\nabla+t\,K(R,\pi)$ for sections of $E_\pi\to M$, where $t\in\R$ is an appropriate constant, is given by
\begin{equation}\label{eq:krrho}
K(R,\pi)=-\textstyle\sum\limits_{a,b} R_{ab}\, \dd\pi(X_a)\circ\dd\pi(X_b)=-\sum\limits_a\dd\pi(R(X_a))\circ\dd\pi(X_a),
\end{equation}
where $R=\sum_{a,b}R_{ab}\,X_a\otimes X_b$ is the curvature operator of $(M^n,\g)$, and $\{X_a\}$ is an orthonormal basis of $\mathfrak g\subset\mathfrak{so}(n)\cong\wedge^2 T_pM$, see e.g.,~\cite[\S 1.I]{besse} or \cite{hitchin-bochner}.
Note that the image of $R$ is contained in $\mathfrak g$ because $\mathrm{Hol}(M^n,\g)\subset\G$, so we may consider $R\colon\mathfrak g\to\mathfrak g$.
To simplify notation, we also denote \eqref{eq:krrho} by $K(R,E_\pi)$.
The self-adjoint extension of $R$ to $\mathfrak g_\C$, and of $K(R,\pi)$ to $(E_\pi)_\C$, are denoted by the same symbols.
Note that the construction \eqref{eq:krrho} can be performed  with $\dd\pi$ as a $\mathfrak g$-representation or a $\mathfrak g_\C$-representation, in which case $\{X_a\}$ is taken to be an orthonormal basis of $\mathfrak g_\C$.

\begin{proposition}\label{prop:basicsKRpi}
The endomorphisms \eqref{eq:krrho} satisfy the following basic properties:
    \begin{enumerate}[\rm (i)]
        \item The linear map $R\mapsto K(R,\pi)$ is $\G$-equivariant, and $K(R,\pi)$ is self-adjoint;
        \item If $R\succeq0$, then $K(R,\pi)\succeq0$ for any orthogonal or unitary $\G$-representation $\pi$;  
        \item If $\pi$ is reducible, say $\pi\cong\pi'\oplus\pi''$, then $K(R,\pi)=\diag\big(K(R,\pi'),K(R,\pi'')\big)$ is block diagonal according to the  decomposition $E_\pi=E_{\pi'}\oplus E_{\pi''}$;
        \item If $\pi^*\colon \G\to\operatorname{Aut}(E^*)$ is the dual of $\pi\colon\G\to\operatorname{Aut}(E)$, then $K(R,\pi^*)=K(R,\pi)^*$;
        \item If $\lambda\in P_{++}(\G_\C)$, then $K(\operatorname{Id},\pi_\lambda)=\dd\pi_\lambda(-\Cas)=\langle \lambda,\lambda+2\rho\rangle\operatorname{Id}$.
    \end{enumerate}
\end{proposition}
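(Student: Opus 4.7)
The five claims are all essentially formal consequences of the definition \eqref{eq:krrho} together with standard facts from representation theory, so my plan is to address them in order with short, self-contained arguments; none requires serious new work and the main task is bookkeeping.

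For (i), I would first note that $R\colon\mathfrak g\to\mathfrak g$ is symmetric, hence $R_{ab}=R_{ba}$ in an orthonormal basis $\{X_a\}$ of $\mathfrak g$. Since $\pi$ is orthogonal or unitary, $\dd\pi(X_a)$ is skew-adjoint, so $(\dd\pi(X_a)\circ\dd\pi(X_b))^* = \dd\pi(X_b)\circ \dd\pi(X_a)$. Symmetrizing in $a,b$ using $R_{ab}=R_{ba}$ shows $K(R,\pi)^*=K(R,\pi)$. For $\G$-equivariance, observe that for any $g\in\G$ the conjugation $R\mapsto \mathrm{Ad}(g)\,R\,\mathrm{Ad}(g)^{-1}$ corresponds, via the change of basis $X_a\mapsto \mathrm{Ad}(g)X_a$, to conjugation of $K(R,\pi)$ by $\pi(g)$; since \eqref{eq:krrho} is basis-independent (being the image of $-\Cas$ built from $R$), this suffices.

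For (ii), I would diagonalize $R$ in an orthonormal basis so that $R=\sum_a \nu_a X_a\otimes X_a$ with $\nu_a\geq0$. Then \eqref{eq:krrho} reduces to
\[
K(R,\pi)=-\textstyle\sum\limits_a \nu_a\,\dd\pi(X_a)^2=\sum\limits_a \nu_a\,\dd\pi(X_a)^*\dd\pi(X_a),
\]
which is manifestly a positive combination of positive semidefinite operators. Part (iii) is immediate: if $\pi\cong\pi'\oplus\pi''$ then $\dd\pi(X_a)=\dd\pi'(X_a)\oplus\dd\pi''(X_a)$, and this block-diagonal structure is preserved under composition and linear combinations, so $K(R,\pi)$ is block diagonal with the claimed blocks.

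For (iv), I would use that the dual representation satisfies $\dd\pi^*(X)=-\dd\pi(X)^*$ on $E^*$, so
\[
K(R,\pi^*)=-\textstyle\sum\limits_{a,b} R_{ab}\bigl(-\dd\pi(X_a)^*\bigr)\bigl(-\dd\pi(X_b)^*\bigr)=-\sum\limits_{a,b} R_{ab}\bigl(\dd\pi(X_b)\dd\pi(X_a)\bigr)^*,
\]
and then swap the indices using $R_{ab}=R_{ba}$ to identify this with $K(R,\pi)^*$. Finally, for (v) I would set $R=\operatorname{Id}$, so $R_{ab}=\delta_{ab}$, and observe that \eqref{eq:krrho} collapses to
\[
K(\operatorname{Id},\pi_\lambda)=-\textstyle\sum\limits_a \dd\pi_\lambda(X_a)^2=\dd\pi_\lambda(-\Cas),
\]
which, by Freudenthal's formula (recalled in \Cref{sec: rep-theory}), acts on the irreducible representation $\pi_\lambda$ as multiplication by $\langle\lambda,\lambda+2\rho\rangle$. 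There is no real obstacle here; the one place to be careful is the sign and adjoint conventions in (i) and (iv), which must be tracked consistently between the real (orthogonal) and complex (unitary) cases.
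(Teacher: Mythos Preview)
Your proposal is correct and is exactly what the paper has in mind: the paper states that ``the proofs are elementary and left to the reader,'' and your arguments are precisely the routine verifications one would supply. The only minor point to tidy is in (iv), where the $*$ on the right-hand side of $K(R,\pi^*)=K(R,\pi)^*$ denotes the transpose (dual) map on $E^*$, so $\dd\pi^*(X)=-\dd\pi(X)^t$; your use of the adjoint is justified because in the orthogonal/unitary setting the inner product identifies $E\cong E^*$ and transpose with adjoint, but it is worth saying this explicitly.
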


The proofs are elementary and left to the reader. Let us discuss a few examples:

\begin{example}\label{ex:vanillaDirac}
Let $\pi_S$ be the spinor representation, see Example~\ref{ex:reptheorySO2m}. A standard computation using the symmetries of Clifford multiplication and the (first) Bianchi identity yields $K(R,\pi_S)=\frac{\scal}{8}\operatorname{Id}$, see \cite[Thm.~II.8.8]{lm-book}. The square of the Dirac operator $D$ on the complex spinor bundle $S=E_{\pi_S}$ of a spin manifold $(M^n,\g)$ satisfies $D^2=\nabla^*\nabla+t\,K(R,\pi_S)$ with $t=2$, cf.~\eqref{eq:Lichn}.
\end{example}

\begin{example}\label{ex:TM}
If $\pi$ is either the defining representation of $\SO(n)$ on $\R^n$, or its dual, then $K(R,\pi)=\Ric$, see \cite[Ex.~2.2]{bm-iumj} and Proposition~\ref{prop:basicsKRpi} (iv). The Hodge Laplacian on the  bundle $TM^*$ of $1$-forms is $\Delta_\pi=\nabla^*\nabla+t\,K(R,\pi)$ with $t=2$.
\end{example}

\begin{example}\label{ex:trivial}
If $\pi=\bigoplus_i \pi_{i}$ is a decomposition 
into the direct sum of irreducibles, then 
the associated bundle $E_\pi$ decomposes into the corresponding direct sum of subbundles $E_\pi=\bigoplus_i E_{\pi_{i}}$.
We denote by $(E_\pi)_0\subset E_\pi$ the subbundle corresponding to the \emph{trivial} isotypic component, and write $E_\pi=(E_\pi)_0\oplus (E_\pi)_0^\perp$. Note that $(E_\pi)_0\to M$ is a trivial bundle, i.e., if the trivial isotypic component of $\pi$ consists of $q$ copies of the trivial representation, then $(E_\pi)_0=M\times \R^q$. Clearly, $K(R,(E_\pi)_0)=0$ by \eqref{eq:krrho}, and thus $K(R,E_\pi)=\diag(0, K(R,(E_\pi)_0^\perp))$ by Proposition~\ref{prop:basicsKRpi} (iii).
\end{example}

Recall the decomposition $R=R_{\mathcal U}+R_{\mathcal L}+R_{\mathcal W}+R_{\wedge^4}$ of $R\in\Sym^2(\wedge^2\R^n)$ into $\mathsf O(n)$-irreducible components, see \cite[\S 1.G]{besse}. In particular,
\begin{equation}\label{eq:RURL}
\textstyle R_\mathcal U=\frac{\scal}{2n(n-1)} \,\g\owedge\g, \qquad R_\mathcal L=\frac{1}{n-2}\, \g\owedge\left(\Ric-\frac{\scal}{n}\right)\!,
\end{equation}
where $\owedge$ is the Kulkarni--Nomizu product. 
The Weyl part $R_\mathcal W$ does not have $\g$ factors, nor does $R_{\wedge^4}$, which vanishes if and only if $R$ satisfies the Bianchi identity.

\begin{example}\label{ex:Labbi-formulas}
The exterior and symmetric $p^\text{th}$ powers of the defining representation, respectively of its dual, give rise to bundles $E_\pi$ which are isomorphic to $\wedge^p TM$ and $\Sym^p TM$, respectively $\wedge^p TM^*$ and $\Sym^p TM^*$.
By Proposition~\ref{prop:basicsKRpi}~(iv), we only consider the former. Note that $K(R,\wedge^p TM)$ and $K(R,\Sym^p TM)$ are block diagonal according to the decompositions into irreducibles in Examples~\ref{ex:reptheorySO2m} and \ref{ex:reptheorySO2m+1}, by Proposition~\ref{prop:basicsKRpi} (iii).
These blocks can be computed in terms of the decomposition of $R\in\Sym^2(\wedge^2\R^n)$ into $\mathsf O(n)$-irreducible components (see \cite[Thm B]{bm-iumj}):
\begin{equation*}
\begin{aligned}
K\big(R,\wedge^p\pi_{\omega_1}\big)&=\left(\tfrac{2(n-p)}{p-1}\,R_{\mathcal U}+\tfrac{n-2p}{p-1}\,R_{\mathcal L}-2\,R_{\mathcal W}+4\,R_{\wedge^4} \right)\owedge \tfrac{\g^{\owedge(p-2)}}{(p-2)!},\\
K(R, \Sym^p_0 \pi_{\omega_1})&=\Big(\tfrac{n+p-2}{n(p-1)}  K(R_{\mathcal U},\pi_{2\omega_1})+\tfrac{n+2p-4}{n(p-1)}  K(R_{\mathcal L},\pi_{2\omega_1}) \\ &\qquad\qquad\qquad\qquad\qquad\qquad\qquad\qquad +  K(R_{\mathcal W},\pi_{2\omega_1}) \Big)\ovee\tfrac{\g^{\ovee(p-2)}}{(p-2)!},\\
\end{aligned}
\end{equation*}
for all $p\geq2$ and $2\leq p\leq n-2$, respectively, where $\ovee$ is a symmetric version of the Kulkarni--Nomizu product.
If $p=2$, from \cite[Thm B]{bm-iumj} and \cite[Eq.~(22)]{hms},
\begin{equation*}
\begin{aligned}
    K(R,\wedge^2\pi_{\omega_1}) & = 2(n-2) R_{\mathcal U} + (n-4) \,R_{\mathcal L}-2\,R_{\mathcal W}+4\,R_{\wedge^4},\\
    \big(K(R,\Sym^2\pi_{\omega_1})\varphi\big) (X,Y)&=\varphi(\Ric X,Y)+\varphi(X,\Ric Y)-2(\mathring R\varphi)(X,Y),
\end{aligned}
\end{equation*}
where $\mathring R\colon\Sym^2\R^n\to\Sym^2\R^n$ is given by (cf.~\cite[p.~74]{bk} and \cite[p.~52]{besse})
\begin{equation*}
(\mathring R\varphi)(X,Y)=\textstyle\sum\limits_{i,j} \langle R(e_i\wedge X), e_j\wedge Y\rangle \varphi(e_i , e_j).
\end{equation*}
Note that $K(R,\Sym^2\pi_{\omega_1})$ vanishes on the subspace spanned by $\g$, since $\mathring R\g=\Ric$,\newline so $K(R,\Sym^2\pi_{\omega_1})=\operatorname{diag}\big(K(R,\pi_{2\omega_1}),0\big)$ according to the splitting $\Sym^2\pi_{\omega_1}\cong \pi_{2\omega_1}\oplus \pi_0$ of symmetric $2$-tensors into traceless symmetric $2$-tensors and multiples of the identity, respectively.

The Laplacian $\Delta_\pi=\nabla^*\nabla+t\,K(R,\pi)$ is the Hodge Laplacian on $p$-forms if $\pi=\wedge^p\pi_{\omega_1}^*$ and $t=2$, and is the Lichnerowicz Laplacian on symmetric $p$-tensors if $\pi=\Sym^p\pi_{\omega_1}^*$ and $t=-2$. 
\end{example}

\subsection{\texorpdfstring{Sufficient condition for $K(R,\pi)\succeq0$}{Sufficient condition for positivity of curvature term}}
Consider the following:

\begin{definition}\label{def:sigmarR}
Given a self-adjoint operator \(R\) of a $d$-dimensional vector space, whose eigenvalues are $\nu_1\leq \nu_2\leq \dots\leq \nu_d$, we define for each real number $0<r\leq d$,
\begin{equation*}
 \Sigma(r,R)=\nu_1 + \dots + \nu_{\floor{r}} + (r-\floor{r})\nu_{\floor{r}+1}.
\end{equation*} 
We say \(R\) is \emph{$r$-nonnegative} if \(\Sigma(r,R)\geq0\), and \emph{$r$-positive} if $\Sigma(r,R)>0$. Similarly, $R$ is \emph{$r$-nonpositive}, or \emph{$r$-negative}, if the operator $-R$ is $r$-nonnegative, or $r$-positive.  
\end{definition}

For instance, $R$ is $1$-positive if and only if $R\succ0$, and $\frac43$-positive if $\nu_1+\frac13\nu_2>0$.

Note that \(\Sigma(r,R)/r\) is the (continuous) arithmetic mean of the smallest $r$ eigenvalues of $R$, hence nondecreasing in \(r\); and that \(-\Sigma(r,-R)\) is a sum involving the \(\floor{r}+1\) largest eigenvalues of \(R\). Moreover, $\Sigma(r,R)$ and $\Sigma(r,-R)$ are concave in $R$.

Motivated by the key algebraic method underlying the recent works of Petersen--Wink~\cite{petersen-wink,petersen-wink-cplx,petersen-wink-3}, we introduce a representation-theoretic invariant:

\begin{definition}\label{def:PW}
Let $\G$ be a connected compact Lie subgroup
of $\SO(n)$ or $\Spin(n)$, with Lie algebra $\mathfrak g\subset\mathfrak{so}(n)$, and let $\rho$ be the half-sum of positive roots in $\mathfrak g_\C$. 

The \emph{Petersen--Wink invariant} of a nontrivial irreducible orthogonal or unitary $\G$-representa\-tion $\pi$ with highest weight $\lambda\in P_{++}(\G_\C)$ is the positive real number
\begin{equation*}
PW_\G(\pi)=\frac{\langle \lambda,\lambda+2\rho\rangle}{\|\lambda\|^2},
\end{equation*}
and, in case $\frac{\langle \lambda,\lambda+2\rho\rangle}{\|\lambda\|^2}\geq\dim\mathfrak{g}$, we use the convention that $PW_\G(\pi)=\dim\mathfrak{g}$. 
Furthermore, given the decomposition $\pi=\bigoplus_i\pi_{i}$ of an orthogonal or unitary $\G$-representation into irreducibles, we set $PW_\G(\pi):=\min \{PW_\G(\pi_{i}):\pi_i \text{ is nontrivial}\}$.
\end{definition}

\begin{remark}
	Given any connected compact Lie group $\G$, there exist constants $c >0$ and $C >0$ such that $c\, \|\lambda\|^2\leq\langle \lambda,\lambda+2\rho\rangle \leq C\, \|\lambda\|^2$ for all $\lambda\in P_{++}(\G_\C)$, see e.g.~\cite[Lemma 5.6.6]{wallach-book}. For all Lie groups $\G$ considered in this paper, $C$ can be chosen such that $C  < \dim \mathfrak g$, see Remark~\ref{rem:maxPW-SOn} for $\G=\SO(n)$ and $\Spin(n)$. However, it is unclear to us if this holds in general, so we use the convention in Definition~\ref{def:PW}.
\end{remark}

The significance of $PW_\G(\pi)$ to the Bochner technique hinges on the following result, which implies Theorem~\ref{mainthm:pw} in the Introduction: 

\begin{proposition}\label{lowerbound}
	Let $\G$ be as above, \(\pi\colon \G\to\operatorname{Aut}(E)\) be a nontrivial irreducible orthogonal or unitary $\G$-representation with highest weight $\lambda$, and $R\colon\mathfrak g\to\mathfrak g$ be a self-adjoint operator. Then
$K(R,\pi )\succeq\|\lambda\|^2\,\Sigma(PW_\G(\pi),R)\operatorname{Id}$.
\end{proposition}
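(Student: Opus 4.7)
The plan is to expand $\langle K(R,\pi)v,v\rangle$ in an eigenbasis of $R$ on $\mathfrak g$ and reduce the claim to an elementary linear program governed by two constraints: a Casimir trace identity and a uniform operator-norm bound on $\dd\pi$. Without loss of generality I would work with the complexified representation: if $E_\C$ is irreducible with highest weight $\lambda$, this is immediate; if $E_\C \cong V\oplus V^*$, both summands share the Casimir eigenvalue $\langle\lambda,\lambda+2\rho\rangle$ because $\|-w_0\lambda\|=\|\lambda\|$ and $\langle -w_0\lambda,-w_0\lambda+2\rho\rangle=\langle\lambda,\lambda+2\rho\rangle$, so the block structure of \Cref{prop:basicsKRpi}~(iii) reduces this case to the irreducible one.

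First, I would choose an orthonormal eigenbasis $\{X_a\}_{a=1}^{d}$ of $R\colon\mathfrak g\to\mathfrak g$ with $d=\dim\mathfrak g$ and eigenvalues $\nu_1\leq\cdots\leq\nu_d$. Then \eqref{eq:krrho} gives
\[
K(R,\pi) = -\sum_{a=1}^d \nu_a\,\dd\pi(X_a)^2,
\]
and since each $\dd\pi(X_a)$ is skew-Hermitian, the quantities $c_a(v):=\|\dd\pi(X_a)v\|^2=\langle -\dd\pi(X_a)^2 v,v\rangle\geq 0$ satisfy $\langle K(R,\pi)v,v\rangle=\sum_{a=1}^d \nu_a\, c_a(v)$ for every unit $v\in E$.

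Next, I would extract two constraints on $(c_a(v))$. The total-mass constraint is \Cref{prop:basicsKRpi}~(v) applied with $R=\operatorname{Id}$: summing yields $-\sum_a \dd\pi(X_a)^2=\dd\pi(-\Cas)=\langle\lambda,\lambda+2\rho\rangle\operatorname{Id}$, so $\sum_a c_a(v)=r\,\|\lambda\|^2$ with $r=PW_\G(\pi)$. The pointwise constraint $c_a(v)\leq\|\lambda\|^2$ comes from the uniform bound $\|\dd\pi(X)\|_{\mathrm{op}}\leq\|\lambda\|\,\|X\|$ for all $X\in\mathfrak g$: for $H$ in a maximal torus $\mathfrak t\subset\mathfrak g$, $\dd\pi(H)$ acts on each weight space of $E_\C$ with eigenvalue $i\mu(H)$, and all weights satisfy $\|\mu\|\leq\|\lambda\|$ as they lie in the convex hull of the Weyl orbit of $\lambda$, so Cauchy--Schwarz gives $|\mu(H)|\leq\|\lambda\|\,\|H\|$; a general $X\in\mathfrak g$ is $\operatorname{Ad}(\G)$-conjugate to an element of $\mathfrak t$ of the same norm, and $\pi$ intertwines this with a unitary conjugation on $E$ that preserves operator norm.

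Finally, the inequality reduces to the elementary linear program of minimizing $\sum_a \nu_a c_a$ over $(c_a)\in[0,\|\lambda\|^2]^d$ subject to $\sum_a c_a=r\|\lambda\|^2$. Since $\nu_1\leq\cdots\leq\nu_d$, the minimum is attained by filling the smallest eigenvalues first, setting $c_a=\|\lambda\|^2$ for $a\leq\lfloor r\rfloor$, $c_{\lfloor r\rfloor+1}=(r-\lfloor r\rfloor)\|\lambda\|^2$, and $c_a=0$ otherwise, which gives exactly $\|\lambda\|^2\,\Sigma(r,R)$ by \Cref{def:sigmarR}. Feasibility of this LP forces $r\leq d$, so the convention $PW_\G(\pi)=\dim\mathfrak g$ in \Cref{def:PW} truncates a vacuous regime. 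The main obstacle is the uniform operator-norm bound $\|\dd\pi(X)\|_{\mathrm{op}}\leq\|\lambda\|\,\|X\|$ on $\mathfrak g$; once that is in hand, the rest is Casimir arithmetic and a one-line linear programming computation.
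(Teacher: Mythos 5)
Your argument is correct and is essentially the paper's own proof: both expand $\langle K(R,\pi)v,v\rangle$ in an eigenbasis of $R$ and combine the Casimir identity of \Cref{prop:basicsKRpi}~(v) with the weight bound $\|\dd\pi(X)v\|\leq\|\lambda\|\,\|v\|$, your box-constrained linear program being exactly the inequality chain (greedy concentration of mass on the smallest eigenvalues) carried out in the paper. The only cosmetic difference is that the paper handles the convention $PW_\G(\pi)=\dim\mathfrak g$ as a separate case, whereas you note that the two constraints force $\langle\lambda,\lambda+2\rho\rangle\leq\dim\mathfrak g\,\|\lambda\|^2$, so that regime is vacuous.
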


As \(\lambda=0\) only for the trivial representation, Propositions \ref{lowerbound} and \ref{prop:basicsKRpi} (iii) imply:

\begin{corollary}\label{pointwise}
Let $\G$ be as above, \(\pi\colon \G\to\operatorname{Aut}(E)\) be an orthogonal or unitary $\G$-representation, and $R\colon\mathfrak g\to\mathfrak g$ be a self-adjoint operator.
\begin{enumerate}[\rm (i)]
    \item If $R$ is $PW_\G(\pi)$-nonnegative, then $K(R,\pi)\succeq0$. 
    \item If $R$ is $PW_\G(\pi)$-positive and $\pi$ has no trivial components, then \(K(R,\pi)\succ0\).
\end{enumerate}
\end{corollary}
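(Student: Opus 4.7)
My plan is to reduce the statement to the irreducible case provided by \Cref{lowerbound}, via the block-diagonal structure of $K(R,\pi)$ and the monotonicity of $r\mapsto \Sigma(r,R)/r$.

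First, by complete reducibility, I would decompose $\pi\cong \pi_0\oplus \bigoplus_{i\geq 1}\pi_i$, where $\pi_0$ is the trivial isotypic component and $\pi_i$, $i\geq 1$, are the nontrivial irreducible summands, with highest weights $\lambda_i\neq 0$. By \Cref{prop:basicsKRpi}~(iii), this gives the block-diagonal form
\begin{equation*}
K(R,\pi)=\diag\!\bigl(K(R,\pi_0),K(R,\pi_1),K(R,\pi_2),\dots\bigr),
\end{equation*}
so it suffices to verify the claimed inequalities on each block. On the trivial block, formula \eqref{eq:krrho} (equivalently, \Cref{ex:trivial}) yields $K(R,\pi_0)=0$, which is compatible with $\succeq 0$ but never strict. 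This is precisely why the hypothesis in (ii) excludes trivial components.

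On each nontrivial block I would invoke \Cref{lowerbound} directly to obtain
\begin{equation*}
K(R,\pi_i)\;\succeq\; \|\lambda_i\|^2\,\Sigma\!\bigl(PW_\G(\pi_i),R\bigr)\operatorname{Id}.
\end{equation*}
By \Cref{def:PW}, $PW_\G(\pi)=\min_{i\geq 1}PW_\G(\pi_i)$, hence $PW_\G(\pi_i)\geq PW_\G(\pi)>0$ for all $i\geq 1$. Since $r\mapsto \Sigma(r,R)/r$ is nondecreasing (as noted immediately after \Cref{def:sigmarR}),
\begin{equation*}
\Sigma\!\bigl(PW_\G(\pi_i),R\bigr)\;\geq\;\frac{PW_\G(\pi_i)}{PW_\G(\pi)}\,\Sigma\!\bigl(PW_\G(\pi),R\bigr),
\end{equation*}
and the right-hand side is $\geq 0$ under the hypothesis of (i) and $>0$ under that of (ii). Combined with $\|\lambda_i\|^2>0$, this gives $K(R,\pi_i)\succeq 0$ in case (i) and $K(R,\pi_i)\succ 0$ in case (ii); reassembling the blocks yields both conclusions.

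I do not anticipate any real obstacle here: the substantive estimate is packaged in \Cref{lowerbound}, and the remainder is bookkeeping plus one appeal to the monotonicity of $\Sigma(r,R)/r$, which is what allows transferring the sign hypothesis on $PW_\G(\pi)$ to each (possibly larger) $PW_\G(\pi_i)$ without loss.
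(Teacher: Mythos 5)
Your proof is correct and follows essentially the same route as the paper, which deduces the corollary directly from \Cref{prop:basicsKRpi}~(iii) and \Cref{lowerbound}, noting that $\lambda=0$ only for trivial summands; your explicit use of the monotonicity of $r\mapsto\Sigma(r,R)/r$ to pass from $PW_\G(\pi)$ to each $PW_\G(\pi_i)$ is exactly the detail the paper leaves implicit.
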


The following proof incorporates some arguments from \cite{petersen-wink} and \cite[Lemma 3.2 and Remark 3.3]{nienhaus-petersen-wink}.

\begin{proof}[Proof of Proposition~\ref{lowerbound}]
 Let \(\{X_i\}\) be an orthonormal basis of \(\mathfrak{g}\) which diagonalizes \(R\), so that \(R(X_i)=\nu_i\,X_i\), with \(\nu_i\in\R\) and $\nu_1\leq\nu_2\leq\dots\leq\nu_{\dim\mathfrak{g}}.$

 First, assume \(PW_\G(\pi)=\frac{\langle \lambda,\lambda+2\rho\rangle}{\|\lambda\|^2}<\dim\mathfrak{g}\). Let \(v\in E\) and let \(r=\floor{PW_\G(\pi)}\). Since \(\pi\) is orthogonal or unitary, \(\dd\pi(X_i)\) is anti-self-adjoint, and by \eqref{eq:krrho} we have
\begin{align*}
\left<K(R,\pi)v,v\right>&=\textstyle\sum_i\left<\dd\pi(R(X_i))(v),\dd\pi(X_i)v\right>\\
&=\textstyle\sum_i\nu_i\left\|\dd\pi(X_i)v\right\|^2\\
&\geq\textstyle\sum\limits_{i=1}^r\nu_i\left\|\dd\pi(X_i)v\right\|^2+\nu_{r+1}\sum\limits_{i=r+1}^{\dim\mathfrak{g}}\left\|\dd\pi(X_i)v\right\|^2\\
&=-\textstyle\sum\limits_{i=1}^r(\nu_{r+1}-\nu_i)\left\|\dd\pi(X_i)v\right\|^2+\nu_{r+1}\sum\limits_{i=1}^{\dim\mathfrak g}\left\|\dd\pi(X_i)v\right\|^2\\
&\geq-\textstyle\sum\limits_{i=1}^r(\nu_{r+1}-\nu_i)\|\lambda\|^2\|v\|^2+\nu_{r+1}\left<\lambda,\lambda+2\rho\right>\|v\|^2.
\end{align*}
The last inequality above follows from Proposition~\ref{prop:basicsKRpi} (v) and the observation that \(\|\dd\pi(X_i)v\|\leq\|\lambda\|\|v\|\). 
Indeed, up to conjugating, we may assume that \(X_i\) is in a given Cartan subalgebra $\mathfrak h\subset\mathfrak g_\C$. The eigenvalues of \(\dd\pi(X_i)\) are given by \(\mu(X_i)\), for each weight \(\mu\in\mathfrak{h}^*\) of the complexification of \(\pi\). Since \(\lambda\) has maximal length among such weights and \(\|X_i\|=1\), we have \(|\mu(X_i)|\leq\|\lambda\|\), so the observation follows.

Applying Definitions~\ref{def:sigmarR} and \ref{def:PW} to the above inequality, we have 
\begin{align*}
\left<K(R,\pi)v,v\right> &\geq\|\lambda\|^2\left(\textstyle\sum\limits_{i=1}^r\nu_i+\left(PW_\G(\pi)-r\right)\nu_{r+1}\right)\|v\|^2 \\
&= \|\lambda\|^2 \, \Sigma({PW_\G(\pi)},R)\, \|v\|^2.
\end{align*}
Second, suppose \(PW_\G(\pi)={\dim\mathfrak g}\leq\frac{\langle \lambda,\lambda+2\rho\rangle}{\|\lambda\|^2}\). If \(\nu_{\dim\mathfrak g}\geq0,\) a similar argument holds, using $\nu_{\dim \mathfrak g}$ instead of $\nu_{r+1}$ in the second sum above. If \(\nu_{\dim\mathfrak g}<0,\) and thus all \(\nu_i<0,\) the conclusion follows directly from \(\|\dd\pi(X_i)v\|\leq\|\lambda\|\|v\|.\)
\end{proof}

\begin{remark}
Proposition~\ref{lowerbound} is sharp. Indeed, setting $R=\operatorname{Id}$, it follows from Proposition~\ref{prop:basicsKRpi} (v) that $K(\operatorname{Id},\pi )=\langle \lambda,\lambda+2\rho\rangle \,\operatorname{Id}=\|\lambda\|^2\,\Sigma(PW_\G(\pi),\operatorname{Id})\operatorname{Id}$.
\end{remark}

Let us compute the Petersen--Wink invariant for some classes of representations.

\begin{proposition}\label{prop:PWforSOn}
Let $\pi$ be the irreducible orthogonal or unitary $\SO(n)$-represen\-tation with highest weight $\lambda=\sum\limits_{j=1}^m a_j\varepsilon_j$, as in Examples~\ref{ex:reptheorySO2m} and \ref{ex:reptheorySO2m+1}, $n\geq6$. Then
   \begin{equation}\label{eq:PWforSOn}
    \phantom{ \qquad m=\floor{\tfrac{n}{2}}}
        PW_{\SO(n)}(\pi)=1+\dfrac{\sum_{j=1}^m (n-2j)\,a_j}{a_1^2+\dots+a_m^2}, \qquad m=\floor{\tfrac{n}{2}}.
    \end{equation} 
\end{proposition}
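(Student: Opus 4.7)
The statement is essentially a direct unpacking of the definition $PW_{\SO(n)}(\pi)=\langle\lambda,\lambda+2\rho\rangle/\|\lambda\|^2$, using the root-system data already collected in Examples \ref{ex:reptheorySO2m} and \ref{ex:reptheorySO2m+1}. The plan is to handle the even and odd cases in parallel, show that both give rise to the same numerator $\sum_j(n-2j)a_j$, and then verify that the cap convention in Definition \ref{def:PW} does not activate under the hypothesis $n\geq 6$.

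First, the orthonormality $\langle\varepsilon_i,\varepsilon_j\rangle=\delta_{ij}$, which holds in both types $B_m$ and $D_m$ in the fixed inner product on $\mathfrak h^*$, immediately gives $\|\lambda\|^2=\sum_{j=1}^m a_j^2$. For the cross term, the key observation is that the two Weyl-vector formulas produce identical answers. In type $D_m$ (so $n=2m$), equation \eqref{eq:rho} gives $\rho=\sum_i(m-i)\varepsilon_i$, whence
\[
2\langle\lambda,\rho\rangle=\sum_{j=1}^m 2(m-j)\,a_j=\sum_{j=1}^m (n-2j)\,a_j.
\]
In type $B_m$ (so $n=2m+1$), equation \eqref{eq:rhoSOodd} gives $\rho=\sum_i(m-i+\tfrac12)\varepsilon_i$, whence
\[
2\langle\lambda,\rho\rangle=\sum_{j=1}^m(2m-2j+1)\,a_j=\sum_{j=1}^m(n-2j)\,a_j
\]
as well. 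Adding $\|\lambda\|^2$ to the numerator and dividing by $\|\lambda\|^2$ then produces the formula claimed in \eqref{eq:PWforSOn}.

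The one remaining verification is that the cap convention does not alter the value, i.e.\ that the right-hand side of \eqref{eq:PWforSOn} is at most $\dim\mathfrak{so}(n)=\binom{n}{2}$. Since the coordinates $a_j$ are either all integers or all half-integers, subject to $a_1\geq\cdots\geq|a_m|\geq 0$ and not all zero, one has $\|\lambda\|^2\geq 3/4$ once $m\geq 3$. A Cauchy--Schwarz bound $\sum_j(n-2j)a_j\leq \bigl(\sum_j(n-2j)^2\bigr)^{1/2}\|\lambda\|$, combined with the closed-form evaluation of $\sum_{j=1}^m(n-2j)^2$ in each of the two cases, yields a bound that is routinely checked to be strictly less than $\binom{n}{2}$ for every $n\geq 6$, ruling out the cap.

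There is essentially no substantive obstacle: the calculation is mechanical once the correct formulas for $\rho$ are inserted, and the only mild subtlety is that the even and odd cases must be unified through the equality $2(m-j)=n-2j=(2m-2j+1)-1+\text{(bookkeeping)}$, which in our approach emerges automatically from the two $\rho$ formulas. The cap check is cleanly separated from the main identity and reduces to an elementary inequality enabled precisely by the hypothesis $n\geq 6$.
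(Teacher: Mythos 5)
Your proof is correct and takes essentially the same route as the paper: substitute the two Weyl-vector formulas \eqref{eq:rho} and \eqref{eq:rhoSOodd} into $PW_{\SO(n)}(\pi)=\tfrac{\langle\lambda,\lambda+2\rho\rangle}{\|\lambda\|^2}$ and observe that both cases yield the numerator $\sum_j(n-2j)a_j$ over $\sum_j a_j^2$. Your additional Cauchy--Schwarz verification that the cap in \Cref{def:PW} never activates is sound (and a reasonable point of care), though the paper handles this separately via the sharper bound $PW(\pi)\leq n-1$ in \Cref{rem:maxPW-SOn}.
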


\begin{proof}
It follows from \eqref{eq:rho}, \eqref{eq:rhoSOodd}, and Definition~\ref{def:PW}, 
that $PW_{\SO(n)}(\pi)$ equals
\begin{equation*}
        \begin{cases}
        1+\tfrac{(2m-2)a_1+(2m-4)a_2+\dots+4a_{m-2}+2a_{m-1}}{a_1^2+\dots+a_m^2}, & \text{if } n=2m,\\[10pt]
        1+\tfrac{(2m-1)a_1+(2m-3)a_2+\dots+3a_{m-1}+a_{m}}{a_1^2+\dots+a_m^2}, & \text{if } n=2m+1,
\end{cases}
\end{equation*}
and the above simplifies simultaneously for $n=2m$ and $n=2m+1$ to \eqref{eq:PWforSOn}.
\end{proof}

In particular, by Proposition~\ref{prop:PWforSOn} and Examples~\ref{ex:reptheorySO2m} and \ref{ex:reptheorySO2m+1} if $n\geq6$, and by direct computation using Definition~\ref{def:PW} if $2\leq n\leq6$, we have that, given any $n\geq2$,
\begin{align}
    PW_{\SO(n)}(\wedge^p \pi_{\omega_1}) &= n-p, &&\text{for all } 1\leq p\leq\floor{\tfrac{n}{2}},\label{eq:pw-wedgep}\\
    PW_{\SO(n)}(\Sym^p\pi_{\omega_1}) &= \tfrac{n+p-2}{p}, &&\text{for all } p\geq1.\label{eq:pw-symp}
\end{align}
Clearly, we have $PW_{\Spin(n)}(\widehat{\pi})=PW_{\SO(n)}(\pi)$, where $\widehat{\pi}$ is the precomposition of the $\SO(n)$-representation $\pi$ with the double cover $\Spin(n)\to\SO(n)$. Moreover, the Petersen--Wink invariant $PW_{\Spin(n)}(\pi)$ of an irreducible orthogonal or unitary $\Spin(n)$-represen\-tation $\pi$ with highest weight $\lambda=\sum_{j=1}^m a_j\varepsilon_j$ is also given by \eqref{eq:PWforSOn}, as the proof of Proposition~\ref{prop:PWforSOn} only requires $\lambda\in P_{++}(\mathfrak{so}(n,\C))$. Thus, we shall unambiguously write $PW(\pi)$ for both $\Spin(n)$- and $\SO(n)$-representations whose highest weight is $\lambda\in P_{++}(\mathfrak{so}(n,\C))$ as above, and compute it using~\eqref{eq:PWforSOn}.

\subsection{Vanishing theorems}
Throughout this subsection, $(M^n,\g)$ denotes a closed Riemannian $n$-manifold, $n\geq2$, with curvature operator $R\colon \wedge^2 TM\to\wedge^2 TM$. Consider the same setup as Section~\ref{sec:weitzenbock}, where $\G$ is a connected compact Lie group that contains $\operatorname{Hol}(M^n,\g)$, and recall that if $\pi\colon\G\to\operatorname{Aut}(E)$ is an orthogonal or unitary $\G$-representation, then $(E_\pi)_0\subset E_\pi$ is the subbundle corresponding to the trivial isotypic component of $\pi$, see Example~\ref{ex:trivial}. If $\pi$ has no trivial component, $(E_\pi)_0=0$.
To simplify notation, we write $PW_\G(E_\pi)=PW_\G(\pi)$, in the same way as $K(R,E_\pi)=K(R,\pi)$.
In light of Corollary~\ref{pointwise}, the Bochner technique yields:

\begin{theorem}\label{thm:bochner}
    Let $\Delta_\pi=\nabla^*\nabla + t \, K(R,\pi)$ be the Laplacian on $E_\pi\to M$, where $R\colon\mathfrak g\to\mathfrak g$ is the curvature operator of $(M^n,\g)$ restricted to $\mathfrak g\subset\mathfrak{so}(n)$. Given a harmonic section $\phi$ of $E_\pi$, i.e., $\Delta_\pi\phi=0$, the following hold:
    \begin{enumerate}[\rm (i)]
        \item If $t\geq0$ and $R$ is $PW_\G(\pi)$-nonnegative, then $\nabla \phi\equiv0$.
        \item If $t > 0$ and $R$ is $PW_\G(\pi)$-positive, then $\phi\in (E_\pi)_{0}$.
         \item If $t\leq0$ and $R$ is $PW_\G(\pi)$-nonpositive, then $\nabla \phi\equiv0$.
        \item If $t < 0$ and $R$ is $PW_\G(\pi)$-negative, then $\phi\in (E_\pi)_{0}$.
    \end{enumerate}
\end{theorem}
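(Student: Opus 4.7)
The plan is to apply the standard Bochner technique. Pairing $\Delta_\pi\phi = 0$ with $\phi$ and integrating over $M$, using that $\nabla^*$ is the formal $L^2$-adjoint of $\nabla$ for the bundle metric on $E_\pi$, gives
\[
0 \;=\; \int_M \langle \Delta_\pi \phi, \phi\rangle \;=\; \|\nabla\phi\|_{L^2}^2 \,+\, t \int_M \langle K(R,\pi)\phi, \phi\rangle,
\]
and all four conclusions will be read off from the signs of these two summands.

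To handle the two subtleties—namely that $\pi$ may contain trivial isotypic components, and that $R$ may only be nonnegative rather than strictly positive—I would decompose $E_\pi = (E_\pi)_0 \oplus (E_\pi)_0^\perp$ and use \Cref{prop:basicsKRpi}(iii) together with \Cref{ex:trivial} to write $K(R,\pi) = \diag\bigl(0,\, K(R,(E_\pi)_0^\perp)\bigr)$. Writing $\phi = \phi_0 + \phi^\perp$ accordingly, only $\phi^\perp$ enters the curvature integrand. The representation associated to $(E_\pi)_0^\perp$ has no trivial components and the same Petersen--Wink invariant as $\pi$, so \Cref{pointwise}(ii) applies to it. Furthermore, linearity of $K(\cdot,\pi)$ in its first argument, apparent from \eqref{eq:krrho}, gives $K(-R,\pi)=-K(R,\pi)$, so $PW_\G(\pi)$-(non)positivity of $R$ is equivalent to $PW_\G(\pi)$-(non)negativity of $-R$; this reduces cases (iii) and (iv) at once to (i) and (ii) (the factor $t\leq 0$ getting absorbed into the sign change).

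For (i), the hypothesis $t\geq 0$ and $K(R,\pi)\succeq 0$ from \Cref{pointwise}(i) make both terms in the integrated identity nonnegative, so both must vanish; the first vanishing forces $\nabla\phi \equiv 0$. For (ii), with $t>0$ and $K(R,(E_\pi)_0^\perp)\succ 0$ pointwise, the integrand $\langle K(R,\pi)\phi,\phi\rangle = \langle K(R,(E_\pi)_0^\perp)\phi^\perp,\phi^\perp\rangle$ is nonnegative and strictly positive wherever $\phi^\perp\neq 0$; since its integral vanishes, $\phi^\perp \equiv 0$, i.e., $\phi \in (E_\pi)_0$. Cases (iii) and (iv) follow by the reduction above.

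The argument is short once \Cref{pointwise} is in hand. The only genuine bookkeeping lies in isolating the trivial isotypic subbundle $(E_\pi)_0$: without that step, \Cref{pointwise} could not supply the strict positivity needed in (ii) and (iv), since $K(R,\pi)$ necessarily vanishes on $(E_\pi)_0$ regardless of $R$.
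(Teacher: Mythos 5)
Your proposal is correct and follows essentially the same route as the paper: the integrated Weitzenb\"ock identity, \Cref{pointwise} for the sign of the curvature term, and the splitting $E_\pi=(E_\pi)_0\oplus(E_\pi)_0^\perp$ via \Cref{prop:basicsKRpi}(iii) to get strict positivity off the trivial isotypic component. You merely spell out more explicitly the reduction of (iii)--(iv) to (i)--(ii) via $K(-R,\pi)=-K(R,\pi)$, which the paper leaves implicit by treating only $t\geq0$.
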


\begin{proof}
All assertions follow from Proposition~\ref{prop:basicsKRpi} and Corollary~\ref{pointwise}, together with
	\begin{equation}\label{eq:bochner}
		0=\int_M \langle\Delta_\pi \phi,\phi\rangle = \int_M \|\nabla\phi\|^2 + t\,\langle K(R,\pi)\phi,\phi\rangle.
	\end{equation}
Namely, let us consider only $t\geq0$. If $R$ is $PW_\G(\pi)$-nonnegative, then $K(R,\pi)\succeq0$ by Corollary~\ref{pointwise}, so $\nabla\phi\equiv0$. If $R$ is $PW_\G(\pi)$-positive, then, as in Example~\ref{ex:trivial}, we have $K(R,\pi)=\diag(0, K(R,(E_\pi)_0^\perp))$. Thus, $K(R,(E_\pi)_0^\perp)\succ0$ by Corollary~\ref{pointwise} because $PW_\G(E_\pi)=PW_\G((E_{\pi})_0^\perp)$, and hence $\phi\in (E_\pi)_0$ provided that $t>0$.
\end{proof}

\begin{remark}
    In general, $\nabla \phi\equiv0$ does not imply $\phi\in (E_\pi)_0$. Instead, $\nabla \phi\equiv0$ implies $\phi\in (E_{\pi'})_0$ where $\pi'$ is the restriction of $\pi$ to $\mathrm{Hol}(M^n,\g)$.
\end{remark}

The classical vanishing theorems due to Bochner regarding Ricci curvature follow from the above ideas and Example~\ref{ex:TM}. Namely, if $(M^n,\g)$ is a closed oriented Riemannian $n$-manifold, let $\pi$ be the dual of the defining representation of $\SO(n)$, so $E_\pi$ is the bundle of $1$-forms on $(M^n,\g)$.
From Example~\ref{ex:TM}, the Hodge Laplacian is $\Delta_\pi=\nabla^*\nabla+2\,K(R,\pi)=(\dd+\dd^*)^2=\dd\dd^*+\dd^*\dd$, and $K(R,\pi)=\Ric$. 
First, if $\Ric\succeq0$, then any harmonic $1$-form is parallel by \eqref{eq:bochner} and hence $b_1(M)\leq n$; and if $\Ric\succ0$, then harmonic $1$-forms vanish so $b_1(M)=0$.

Second, decompose $\operatorname{End}(TM)\cong TM^*\otimes TM^*\cong \wedge^2 TM^* \oplus C^\infty(M) \oplus \Sym^2_0 TM^*$ according to the decomposition into $\mathsf{SO}(n)$-irreducibles of $\pi\otimes\pi$. Given a $1$-form $\phi$, let $T\phi$ be the component of $\nabla\phi$ in $\Sym^2_0 TM^*$. From~\cite[p.~507]{uwe}, we have: 
\begin{align}
    \nabla^*\nabla\phi &= \tfrac12 \dd^*\dd \phi +\tfrac1n \dd\dd^*\phi + T^*T\phi, \label{eq:weitz1}\\
    2\,K(R,\pi)\phi &= \tfrac12 \dd^*\dd \phi +\tfrac{n-1}{n} \dd\dd^*\phi - T^*T\phi.\label{eq:weitz2}
\end{align}
Thus, if $\Ric\preceq0$ and $\phi$ is dual to a conformal Killing vector field (i.e., $T\phi=0$), then \eqref{eq:weitz2} implies that $\phi$ is harmonic, hence parallel, so the group $\operatorname{Conf}(M^n,\g)$ of conformal diffeomorphisms has dimension $\leq n$. If $\Ric\prec0$, then $\phi\equiv0$, so $\operatorname{Conf}(M^n,\g)$ is finite.
The latter conclusions also apply to the isometry group $\operatorname{Iso}(M^n,\g)$, since it is a subgroup of $\operatorname{Conf}(M^n,\g)$. These facts are closely related to Theorem~\ref{thm:bochner} applied to 
$TM$ and $TM^*$: it follows from Proposition~\ref{prop:PWforSOn} that $PW_{\SO(n)}(TM)=PW_{\SO(n)}(TM^*)=n-1$, and $\Ric\succ0$ if $R$ is $(n-1)$-positive.

Applying Theorem~\ref{thm:bochner} to $\wedge^p TM^*$, since $t > 0$ for the Hodge Laplacian and $PW_{\SO(n)}(\wedge^p TM^*)=PW_{\SO(n)}(\wedge^{n-p} TM^*)=\max\{p,n-p\}$, as computed in \eqref{eq:pw-wedgep}, one immediately recovers:

\begin{theorem}[Petersen--Wink~{\cite[Thm.~A, B]{petersen-wink}}]\label{thm:pw}
Given $1\leq p \leq \floor{\frac{n}{2}}$, let $q$ be such that $1\leq \min\{q,n-q\}\leq p$, and let $\phi\in\wedge^q TM^*$ be a harmonic $q$-form.
    \begin{enumerate}[\rm (i)]
     \item If $R$ is $(n-p)$-nonnegative, then $\nabla\phi\equiv 0$. In particular, $b_q(M)\leq \binom{n}{q}$;
     \item If $R$ is $(n-p)$-positive, then $\phi\equiv0$. In particular,
    \[b_1(M)=\dots=b_{p}(M)=0, \quad \text{ and }\quad b_{n-p}(M)=\dots =b_{n-1}(M)=0.\]
    \end{enumerate}    
\end{theorem}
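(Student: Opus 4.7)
The plan is to recognize \Cref{thm:pw} as a direct application of \Cref{thm:bochner} to the bundle $E_\pi = \wedge^q TM^*$ corresponding to the representation $\pi = \wedge^q\pi_{\omega_1}$ of $\G = \SO(n)$, and carry out four short steps.

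First, I would record the relevant Weitzenböck formula: the Hodge Laplacian $\Delta_H$ on $q$-forms satisfies $\Delta_H = \nabla^*\nabla + t\,K(R,\wedge^q\pi_{\omega_1})$ for some $t > 0$, generalizing the $t = 2$ appearing for $q = 1$ in \Cref{ex:TM}. A harmonic $q$-form $\phi$ is then a section of $\wedge^q TM^*$ with $\Delta_H\phi = 0$, putting us in the setting of \Cref{thm:bochner}.

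Second, I would compute $PW_{\SO(n)}(\wedge^q\pi_{\omega_1})$. The Hodge star gives an $\SO(n)$-equivariant isomorphism $\wedge^q \R^n \cong \wedge^{n-q}\R^n$, so it suffices to handle $q' := \min\{q,n-q\} \leq \floor{n/2}$, and \eqref{eq:pw-wedgep} yields $PW_{\SO(n)}(\wedge^q\pi_{\omega_1}) = n - q'$. By hypothesis $q' \leq p$, so $n - q' \geq n - p$. To pass the curvature hypothesis across this inequality, I would use that $\Sigma(r,R)/r$ is the arithmetic mean of the $r$ smallest eigenvalues of $R$, hence nondecreasing in $r$: thus $\Sigma(n-p,R) \geq 0$ (resp.\ ${>}\,0$) implies $\Sigma(n-q',R) \geq 0$ (resp.\ ${>}\,0$), so $R$ is $PW_{\SO(n)}(\wedge^q\pi_{\omega_1})$-nonnegative (resp.\ positive).

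Third, I would invoke \Cref{thm:bochner} (i) to conclude $\nabla\phi \equiv 0$ under the hypothesis of (i); since $\phi$ is then determined by its value at a single point, the space of harmonic $q$-forms injects into $\wedge^q T_xM^*$, giving $b_q(M) \leq \binom{n}{q}$. For (ii), \Cref{thm:bochner} (ii) yields $\phi \in (E_\pi)_0$; but the condition $\min\{q,n-q\} \geq 1$ forces $1 \leq q \leq n-1$, and in this range $\wedge^q\pi_{\omega_1}$ has no trivial $\SO(n)$-component by \eqref{eq:decomp-wedgep} (it is irreducible for $q \neq n/2$, and in the middle degree for even $n$ splits as $\wedge^{n/2}_+ \oplus \wedge^{n/2}_-$ into two nontrivial irreducibles). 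Hence $(E_\pi)_0 = 0$ and $\phi \equiv 0$. Letting $q$ range through $1,\dots,p$ and $n-p,\dots,n-1$ gives the stated vanishing of Betti numbers.

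The main point requiring attention beyond the machinery already established in the paper is the positivity of $t$ in the Weitzenböck formula for arbitrary $q$; this is a classical fact (and the precise value of $t$ does not affect the argument), so I do not expect it to present a genuine obstacle. Everything else is a direct bookkeeping translation of the general Bochner criterion in \Cref{thm:bochner} through the representation-theoretic invariant computed in \eqref{eq:pw-wedgep}.
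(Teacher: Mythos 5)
Your proposal is correct and follows essentially the same route as the paper, which deduces the statement by applying \Cref{thm:bochner} to $\wedge^q TM^*$ with $t>0$ for the Hodge Laplacian and $PW_{\SO(n)}(\wedge^q TM^*)=\max\{q,n-q\}$ from \eqref{eq:pw-wedgep}. The details you supply (monotonicity of $\Sigma(r,R)/r$ to pass from $n-p$ to $n-\min\{q,n-q\}$, and absence of trivial components in $\wedge^q\pi_{\omega_1}$ for $1\leq q\leq n-1$) are exactly what the paper leaves implicit.
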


Applying Theorem~\ref{thm:bochner} to $\Sym^2_b(\wedge^2 TM)$, we obtain slight generalizations of the Tachibana-type result \cite[Thm.~D]{petersen-wink}, where the \emph{Einstein} assumption is relaxed to \emph{harmonic curvature operator} or \emph{harmonic Weyl tensor} (this was independently observed in~\cite{cmr,petersen-wink-3}). Recall the second Bianchi identity yields~$\dd R=0$, viewing the curvature operator $R$ as a $2$-form with values in $\wedge^2 TM$, so $R$ is called \emph{harmonic} if it is divergence-free, i.e., $\dd^*R=0$, see \cite[Chap.~16]{besse}. Thus, $R$ is harmonic if and only if $0=\Delta_\pi R=\nabla^*\nabla R + 2 K(R,\pi)R$, where $\pi=\Sym^2_b(\wedge^2\R^n)$.

\begin{theorem}\label{thm:pwtachibana}
Suppose $(M^n,\g)$ has harmonic curvature operator, $n\geq5$.
\begin{enumerate}[\rm (i)]
    \item If $R$ is $\frac{n-1}{2}$-nonnegative, then $(M^n,\g)$ is locally symmetric;
    \item If $R$ is $\frac{n-1}{2}$-positive, then $(M^n,\g)$ has constant sectional curvature.
\end{enumerate}
For $n=3$ or $4$, the above statements remain true if $\frac{n-1}{2}$ is replaced with $\tfrac{n}{2}$.
\end{theorem}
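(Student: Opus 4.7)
The plan is to apply \Cref{thm:bochner} to the bundle $E_\pi = \Sym^2_b(\wedge^2 TM)$ of algebraic curvature operators, where $\pi$ is the orthogonal $\SO(n)$-representation on $\Sym^2_b(\wedge^2\R^n)$, viewing $R$ itself as a section of $E_\pi$. The first step is to show that the harmonic curvature hypothesis $\delta R = 0$, combined with the second Bianchi identity, implies the Weitzenb\"ock equation
\begin{equation*}
\nabla^*\nabla R + t\, K(R,\pi)\, R = 0
\end{equation*}
for some constant $t>0$, which is the standard Weitzenb\"ock formula for the Lichnerowicz Laplacian on $\Sym^2_b(\wedge^2 TM)$ applied to $R$. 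This identity is already used in the Einstein setting of \cite[Thm.~D]{petersen-wink}; only the contracted second Bianchi identity is required in its derivation, so the Einstein hypothesis can be weakened to $\delta R = 0$.

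Next, I would compute the Petersen--Wink invariant of $\pi$. Decomposing $\pi = \pi_{\mathcal U}\oplus\pi_{\mathcal L}\oplus\pi_{\mathcal W}$ into $\mathsf O(n)$-irreducible summands as in \eqref{eq:RURL}, with trivial isotypic component $(E_\pi)_0 = \R\cdot g\owedge g$, the formula \eqref{eq:PWforSOn} yields $PW(\pi_{\mathcal L}) = n/2$ (from highest weight $2\varepsilon_1$) and $PW(\pi_{\mathcal W}) = (n-1)/2$ (from highest weight $2\varepsilon_1 + 2\varepsilon_2$, valid for $n\geq 5$). Hence $PW_{\SO(n)}(\pi) = (n-1)/2$ when $n \geq 5$. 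For $n=3$, the Weyl summand vanishes, leaving only $\pi_{\mathcal L}$ with $PW = n/2$; for $n=4$, a direct dimension-specific analysis using the splitting of Weyl into self-dual and anti-self-dual pieces under $\SO(4)$ likewise replaces the bound by $n/2$.

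With this invariant in hand, \Cref{thm:bochner} concludes the proof. In part (i), the hypothesis that $R$ is $PW$-nonnegative yields $\nabla R\equiv 0$, so $(M^n,g)$ is locally symmetric. In part (ii), strict $PW$-positivity additionally forces $R$ to lie pointwise in the trivial isotypic subbundle $(E_\pi)_0 = \R\cdot g\owedge g$; combined with $\nabla R \equiv 0$, which also follows from the positivity, the scalar coefficient must then be constant, so $(M^n,g)$ has constant sectional curvature.

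The main obstacle I expect is verifying the Weitzenb\"ock identity with the correct sign $t>0$ under the convention of \eqref{eq:krrho}: while routine, this calculation requires carefully matching the representation-theoretic operator $K(R,\pi)$ with the standard form of the Lichnerowicz Laplacian of the curvature tensor (often written in terms of $R \# R$ and $\Ric$ acting on $R$). A secondary subtlety is the $n=4$ case: a naive application of \Cref{lowerbound} via $\pi_{\mathcal W}$ gives only the bound $(n-1)/2 = 3/2$, so reaching the stated $n/2 = 2$ truly exploits $\SO(4)$-specific structure, namely the decoupling $\pi_{\mathcal W} = \pi_{W^+}\oplus\pi_{W^-}$ and its compatibility with the harmonic curvature condition.
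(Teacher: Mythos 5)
Your proposal is correct and follows essentially the same route as the paper: view $R$ as a harmonic section of $\Sym^2_b(\wedge^2 TM)$ (with $t>0$ in the Weitzenb\"ock formula, needing only the second Bianchi identity rather than Einstein), compute $PW_{\SO(n)}$ via the decomposition $\mathcal U\oplus\mathcal L\oplus\mathcal W$ with highest weights $2\varepsilon_1$ and $2\varepsilon_1+2\varepsilon_2$, and conclude with \Cref{thm:bochner}. One small remark: for $n=4$ no ``compatibility with the harmonic curvature condition'' is needed --- the improvement to $n/2$ comes purely from the fact that $\mathcal W_\C$ splits as $\pi_{\varepsilon_1+\varepsilon_2}\oplus\pi_{\varepsilon_1-\varepsilon_2}$, each with $PW=2$, so \Cref{def:PW} directly gives $PW=2$, exactly as in the paper.
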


\begin{proof}
Set $\G=\SO(n)$. Recall the decomposition of the space of curvature operators $\Sym^2_b(\wedge^2\R^n)=\mathcal U\oplus\mathcal L\oplus\mathcal W$ into $\mathsf O(n)$-irreducibles, where $\mathcal U$, $\mathcal L$, and $\mathcal W$ correspond respectively to multiples of the identity (curvature operators with constant sectional curvature), to the traceless Ricci part, and to the Weyl part.
If $n\geq 5$, then $\mathcal U_\C$, $\mathcal L_\C$, and $\mathcal W_\C$ are $\G$-irreducible and of real type, and respectively isomorphic to the trivial representation, $\pi_{2\varepsilon_1}$, and $\pi_{2\varepsilon_1+2\varepsilon_2}$. 
The same is true if $n=3$ or $4$, except for the fact that $\mathcal W_\C$ is trivial if $n=3$, and it  splits further as $\pi_{\varepsilon_1+\varepsilon_2}\oplus \pi_{\varepsilon_1-\varepsilon_2}$ if $n=4$. Thus, the statement follows from Theorem~\ref{thm:bochner} with $t>0$, and the following computations using Proposition~\ref{prop:PWforSOn} for $n\geq5$, and Definition~\ref{def:PW} otherwise:
\begin{align*}
    PW_{\G}\big(\!\Sym^2_b(\wedge^2\R^n)\big)&=\begin{cases}
         \min\{PW_{\G}(\pi_{2\varepsilon_1}),PW_{\G}(\pi_{2\varepsilon_1+2\varepsilon_2}) \},&\text{if } n\geq 5,\\
          \min\{PW_{\G}(\pi_{2\varepsilon_1}),PW_{\G}(\pi_{\varepsilon_1+\varepsilon_2}),PW_{\G}(\pi_{\varepsilon_1-\varepsilon_2}) \},&\text{if } n=4,\\
          PW_{\G}(\pi_{2\varepsilon_1}),&\text{if } n=3,
    \end{cases}\\
    &=\begin{cases}
        \tfrac{n-1}{2}, & \text{ if } n\geq 5,\\
        2, & \text{ if } n=4,\\
        \frac32, & \text{ if } n=3.
    \end{cases}
    \end{align*}
    Note that $R\in (E_\pi)_0$, where $\pi$ is the $\G$-representation on $E=\Sym^2_b(\wedge^2\R^n)$, if and only if $R\in \mathcal U$; 
    and recall $R$ is parallel if and only if $(M^n,\g)$ is locally symmetric.
\end{proof}

\begin{remark}
If $n\geq4$, then $(M^n,\g)$ has harmonic curvature operator if and only if it has constant scalar curvature and harmonic (i.e., divergence-free) Weyl tensor \cite[\S 16.4]{besse}.
Analyzing the irreducible components of $\nabla R\in TM^*\otimes \Sym^2_b(\wedge^2TM)$ or arguing as \cite{petersen-wink-3} implies Theorem~\ref{thm:pwtachibana} for $(M^n,\g)$ with harmonic Weyl tensor.
\end{remark}

Next, consider $\Sym^p_0 TM$, $p\geq2$. Recall that a section $\phi$ of $\Sym^p_0 TM$
is called a \emph{trace-free conformal Killing tensor} if the projection $P_1(\phi)$ of $\nabla \phi\in TM\otimes \Sym^p_0 TM$ onto $\Sym^{p+1}_0 TM$ vanishes identically.
Following \cite[Prop.~6.6]{hms}, we
write $\nabla\phi=P_1(\phi)+P_2(\phi)+P_3(\phi)$, where $P_i(\phi)$, $i=2,3$, are other first-order differential operators given by projecting $\nabla\phi$ onto the remaining irreducibles. In terms of the above, the Weitzenb\"ock formula $\Delta_\pi=\nabla^*\nabla + t\,K(R,\pi)$ can be rewritten as
\begin{equation*}
	t \, K(R,\Sym^p_0\R^n) = -p\, P_1^* P_1 + (n+p-2)\,P_2^*P_2+P_3^*P_3,
\end{equation*}
in particular, the right-hand side is nonnegative on the space of trace-free conformal Killing tensors. Recall that $t < 0$ for this bundle and $PW_{\SO(n)}(\Sym^p_0 TM)=\frac{n+p-2}{p}$ by \eqref{eq:pw-symp}. Thus, applying Corollary~\ref{pointwise}, we have:

\begin{theorem}\label{thm:killing}
Let $\phi\in\Sym^p_0 TM$ be a trace-free conformal Killing tensor. 
    \begin{enumerate}[\rm (i)]
     \item If $R$ is $\frac{n+p-2}{p}$-nonpositive, then $\nabla\phi\equiv 0$.
     \item If $R$ is $\frac{n+p-2}{p}$-negative, then $\phi\equiv0$.
    \end{enumerate}
\end{theorem}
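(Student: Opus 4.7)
My plan is to carry out a Bochner-type argument that parallels the proof of \Cref{thm:bochner}~(iii)--(iv), but adapted to trace-free conformal Killing tensors (which are not assumed to be harmonic). The two main inputs are the Weitzenb\"ock identity stated just above the theorem and the pointwise curvature bound of \Cref{pointwise}.

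First, I would identify $\Sym^p_0 \R^n$ as the irreducible $\SO(n)$-representation with highest weight $\lambda = p\varepsilon_1$, so that $\|\lambda\|^2 = p^2$ and $PW_{\SO(n)}(\Sym^p_0 TM) = \tfrac{n+p-2}{p}$ by \eqref{eq:pw-symp}. Since this representation is nontrivial and irreducible, $(E_\pi)_0 = 0$, so \Cref{pointwise} applies without restriction. Under hypothesis~(i), the operator $-R$ is $\tfrac{n+p-2}{p}$-nonnegative, so \Cref{pointwise}~(i) applied to $-R$ gives $K(-R, \Sym^p_0) \succeq 0$, and hence $K(R,\Sym^p_0) \preceq 0$ pointwise by the linearity of $K$ in $R$ from \Cref{prop:basicsKRpi}~(i). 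Under hypothesis~(ii), the analogous application of \Cref{pointwise}~(ii) yields the strict inequality $K(R,\Sym^p_0) \prec 0$.

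Next, I invoke the Weitzenb\"ock identity $tK = -pP_1^*P_1 + (n+p-2)P_2^*P_2 + P_3^*P_3$ from the text. Since $\phi$ is trace-free conformal Killing, $P_1 \phi \equiv 0$; pairing both sides with $\phi$, integrating over $M$, and using $\int_M \langle P_i^* P_i \phi, \phi \rangle = \int_M \|P_i \phi\|^2$ yields
\[
t \int_M \langle K\phi, \phi \rangle \,=\, (n+p-2) \int_M \|P_2 \phi\|^2 + \int_M \|P_3 \phi\|^2 \,\geq\, 0.
\]

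Finally, I would combine this integral identity with the pointwise bound on $K$ and the sign $t<0$ to force both sides to vanish, so that $P_2\phi \equiv 0 \equiv P_3\phi$; together with $P_1\phi \equiv 0$ this yields $\nabla\phi \equiv 0$, proving (i). For (ii), the strict bound $K \prec 0$ in place of $K\preceq 0$, together with the vanishing of $\int_M \langle K\phi,\phi\rangle$, then forces $\phi \equiv 0$ on all of $M$.

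The delicate step, and the main obstacle, is the sign analysis in the last paragraph: one must carefully exploit the interplay among the sign of $t$, the specific coefficients $-p, (n{+}p{-}2), 1$ in the Weitzenb\"ock identity (and the fact that exactly the \emph{negative} coefficient is annihilated by the trace-free conformal Killing condition), and the direction of the bound on $K$ coming from \Cref{pointwise} applied to $-R$, in order to close the argument and extract the desired vanishing.
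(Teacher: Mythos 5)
Your strategy is the same as the paper's (the decomposition $\nabla\phi=P_1(\phi)+P_2(\phi)+P_3(\phi)$ from \cite{hms}, the rewritten Weitzenb\"ock identity, the value $PW_{\SO(n)}(\Sym^p_0 TM)=\tfrac{n+p-2}{p}$ from \eqref{eq:pw-symp}, and \Cref{pointwise} applied to $-R$), but the proof is not closed, and the step you defer --- the final ``sign analysis'' --- is exactly where the theorem lives; moreover, with the signs as you have arranged them it cannot be closed. Under (i) you obtain $K(R,\Sym^p_0 TM)\preceq 0$ pointwise, hence $\int_M\langle K\phi,\phi\rangle\leq 0$, and multiplying by $t<0$ gives $t\int_M\langle K\phi,\phi\rangle\geq 0$; but the right-hand side of your integrated identity, $(n+p-2)\int_M\|P_2\phi\|^2+\int_M\|P_3\phi\|^2$, is also nonnegative. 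Two nonnegative quantities being equal forces nothing, so neither $\nabla\phi\equiv0$ in (i) nor $\phi\equiv0$ in (ii) follows from what you have written.

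What closes the argument is getting the sign with which $K$ enters the identity right: correctly normalized, a \emph{positive} multiple of $K(R,\Sym^p_0\R^n)$ equals $-p\,P_1^*P_1+(n+p-2)\,P_2^*P_2+P_3^*P_3$; for $p=1$ this is the classical Yano formula $\int_M\Ric(X,X)=-\int_M\|P_1X\|^2+(n-1)\int_M\|P_{\mathrm{tr}}X\|^2+\int_M\|P_{\wedge}X\|^2$, with $K(R,TM)=\Ric$ as in \Cref{ex:TM}, so the remark ``$t<0$'' must be reconciled with the displayed identity rather than used at face value alongside it. With that reading, a trace-free conformal Killing tensor satisfies $c\int_M\langle K\phi,\phi\rangle=(n+p-2)\int_M\|P_2\phi\|^2+\int_M\|P_3\phi\|^2\geq0$ for some $c>0$, and now your pointwise bounds do the work: $K\preceq0$ forces $P_2\phi\equiv P_3\phi\equiv0$, hence $\nabla\phi\equiv0$, proving (i); and $K\prec0$ (valid since $\Sym^p_0$ is irreducible and nontrivial) forces $\phi\equiv0$, proving (ii). So you have the right ingredients and the paper's route, but the decisive sign bookkeeping --- whose resolution is the whole content of the proof --- is left undone, and the concluding step as you stated it would fail.
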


Theorem~\ref{thm:killing} should be compared with \cite[Thm.~1.6]{DS10} and \cite[Prop.~6.6]{hms}, where (i) and (ii) are proven assuming instead that $\sec\leq0$ and $\sec<0$, respectively.

Finally, setting $\G=\U(m)\subset\SO(2m)$ in Theorem~\ref{thm:bochner}, we also recover:

\begin{theorem}[Petersen--Wink~{\cite[Thm.~B, C]{petersen-wink-cplx}}]\label{thm:pw-cplx}
Let $(M^{2m},\g)$ be a closed K\"ahler manifold and consider its K\"ahler curvature operator $R|_{\mathfrak u(m)}\colon \mathfrak u(m)\to\mathfrak u(m)$.
Given $1\leq p,q \leq m$, let $\phi\in\wedge^{p,q} TM^*$ be a harmonic $(p,q)$-form and set \[C^{p,q}=m+1-\tfrac{p^2+q^2}{p+q}.\]
\begin{enumerate}[\rm (i)]
     \item If $R|_{\mathfrak u(m)}$ is $C^{p,q}$-nonnegative, then $\nabla\phi\equiv 0$. In particular, $h^{p,q}(M)\leq \binom{m}{p}\binom{m}{q}$;
     \item If $R|_{\mathfrak u(m)}$ is $C^{p,q}$-positive, then $\phi\equiv0$. In particular, $h^{p,q}(M)=0$.
    \end{enumerate}    
\end{theorem}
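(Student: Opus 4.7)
The strategy is to apply \Cref{thm:bochner} with $\G=\U(m)\supseteq\mathrm{Hol}(M^{2m},\g)$ and $\pi=\wedge^pV^*\otimes\wedge^qV$, where $V=\C^m$ is the standard $\U(m)$-representation, so that the associated bundle $E_\pi=\wedge^{p,q}TM^*$ is precisely the bundle of $(p,q)$-forms. On a K\"ahler manifold the Hodge Laplacian on $(p,q)$-forms satisfies a Weitzenb\"ock identity $\Delta_\pi=\nabla^*\nabla+t\,K(R|_{\mathfrak u(m)},\pi)$ with $t>0$, in which the curvature endomorphism is the one defined by~\eqref{eq:krrho} applied to the K\"ahler curvature operator $R|_{\mathfrak u(m)}\colon\mathfrak u(m)\to\mathfrak u(m)$. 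Given this identification, it suffices to establish $PW_{\U(m)}(\pi)\geq C^{p,q}$: since $r$-positivity is a stronger condition for smaller~$r$, $C^{p,q}$-positivity of $R|_{\mathfrak u(m)}$ then implies $PW_{\U(m)}(\pi)$-positivity, and \Cref{thm:bochner} yields both assertions. The bound in~(i) uses that a parallel section of $E_\pi$ is determined by its value at a single point, giving $h^{p,q}(M)\leq\dim\wedge^{p,q}V^*=\binom{m}{p}\binom{m}{q}$; the vanishing in~(ii) uses that the trivial isotypic subbundle $(E_\pi)_0$ is zero when $p\neq q$ (and is the line spanned by $\omega^p$ when $p=q$, which accounts for the unavoidable K\"ahler classes).

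The bulk of the work is the representation-theoretic computation of $PW_{\U(m)}(\pi)$. I would first use the Lefschetz (primitive) decomposition to write
\begin{equation*}
 \pi\;\cong\;\bigoplus_{k=k_{\min}}^{\min(p,q)}\pi_{\lambda_k},\qquad k_{\min}:=\max\bigl(0,\lceil(p+q-m)/2\rceil\bigr),
\end{equation*}
where each primitive $(p{-}k,q{-}k)$-component is the $\U(m)$-irreducible with highest weight $\lambda_k=\varepsilon_1+\dots+\varepsilon_{p-k}-\varepsilon_{m-q+k+1}-\dots-\varepsilon_m$. A direct computation using~\eqref{eq:rhoUm} then yields
\begin{equation*}
\|\lambda_k\|^2=p+q-2k,\qquad \langle\lambda_k,2\rho\rangle=(p-k)\bigl(m-(p-k)\bigr)+(q-k)\bigl(m-(q-k)\bigr),
\end{equation*}
which, after the substitution $u=p+q-2k$, collapses to the single closed form
\begin{equation*}
 PW_{\U(m)}(\pi_{\lambda_k})\;=\;m+1-\frac{u}{2}-\frac{(p-q)^2}{2u}.
\end{equation*}

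Viewed as a function of $u>0$, this has derivative $\tfrac{(p-q)^2}{2u^2}-\tfrac{1}{2}$, which is nonpositive on $u\geq|p-q|$; in particular the right-hand side is nonincreasing on the admissible range of~$u$. The minimum of $PW_{\U(m)}(\pi_{\lambda_k})$ over nontrivial primitive components is thus attained at the largest admissible value $u_{\max}\leq p+q$; and since the formula takes the value $m+1-\tfrac{p^2+q^2}{p+q}=C^{p,q}$ at the (possibly inadmissible) formal point $u=p+q$, monotonicity in $u$ gives $PW_{\U(m)}(\pi)\geq C^{p,q}$, with equality exactly when $p+q\leq m$. I anticipate the main obstacle to be correctly identifying the Weitzenb\"ock constant $t>0$ and matching the curvature endomorphism in the K\"ahler Bochner formula with $K(R|_{\mathfrak u(m)},\pi)$ in the sense of~\eqref{eq:krrho}; the algebraic minimization is unimodal and routine, and the only effect of the Lefschetz bookkeeping in the edge case $p+q>m$ is to render the bound $C^{p,q}$ non-sharp while still being valid.
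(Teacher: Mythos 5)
Your proposal is correct and follows essentially the same route as the paper: apply \Cref{thm:bochner} with $\G=\U(m)$ and $t>0$ for the Hodge Laplacian, decompose $\wedge^{p,q}\C^m$ into the $\U(m)$-irreducibles with highest weights $\varepsilon_1+\dots+\varepsilon_{p-k}-(\varepsilon_{m-q+k+1}+\dots+\varepsilon_m)$, compute their Petersen--Wink invariants from \eqref{eq:rhoUm}, and minimize (your substitution $u=p+q-2k$ with monotonicity in $u$ is exactly the paper's ``increasing in $k$,'' giving the value $C^{p,q}$ at $k=0$). Your two additional remarks --- that when $p+q>m$ the vanishing primitive components only yield $PW_{\U(m)}(\wedge^{p,q}\C^m)\geq C^{p,q}$, which still suffices since $r$-positivity passes to larger $r$, and that when $p=q$ the trivial component spanned by $\omega^p$ means the Bochner argument gives $\phi\in\operatorname{span}(\omega^p)$ (hence $h^{p,p}=1$, as in Petersen--Wink's original formulation) rather than literal vanishing --- are refinements of, not departures from, the paper's argument.
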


In order to compute $PW_{\U(m)}(\wedge^{p,q} \C^m)=C^{p,q}$, the main step is to obtain the decomposition of $\wedge^{p,q} \C^m$ into $\U(m)$-irreducibles, which can be found in \cite[Sec.~2]{petersen-wink-cplx}, see also \cite{chern-decomp,fujiki}. Namely, denote by $\omega\in\wedge^{1,1}\C^m$ the K\"ahler form, and note that all its exterior powers $\omega^k$ are fixed by $\U(m)$.
Set
$V^{p,q}_k := \wedge^{p-k,0} \C^m \otimes_\C \operatorname{span}\{\omega^k\}\otimes_\C \wedge^{0,q-k} \C^m$, for $0\leq k\leq \min\{p,q\}$. According to \cite[Thm.~2.1]{petersen-wink-cplx}, the modules $\wedge^{p,q}_k \C^m:= V^{p,q}_k\cap (V^{p,q}_{k+1})^\perp$ are $\U(m)$-irreducible and 
\begin{equation*}
\wedge^{p,q} \C^m=\textstyle\bigoplus\limits_{k=0}^{\min\{p,q\}} \wedge^{p,q}_k \C^m,
\end{equation*}
cf.~\cite[Prop.~2.2]{fujiki}. According to \cite[Lemma~2.5]{petersen-wink-cplx}, the highest weight of $\wedge^{p,q}_k \C^m$ is 
\( \lambda(p,q,k)= \varepsilon_1+\dots +\varepsilon_{p-k}-(\varepsilon_{m-(q-k)+1}+\dots+\varepsilon_{m})\). So Definition~\ref{def:PW} and \eqref{eq:rhoUm} yield:
\begin{equation*}
    PW_{\U(m)}(\pi_{\lambda(p,q,k)})=1+\tfrac{m(p+q)-p^2-q^2  -2 k (m-p-q)-2 k^2}{p+q-2 k}.
\end{equation*}
The above is increasing with $k$, so its minimum is achieved with $k=0$, i.e.,
\begin{align*}
    PW_{\U(m)}(\wedge^{p,q} \C^m) &=\min_{0\leq k\leq \min\{p,q\}} PW_{\U(m)}(\pi_{\lambda(p,q,k)}) =PW_{\U(m)}(\pi_{\lambda(p,q,0)}) =C^{p,q}.
\end{align*}
Given the above, Theorem~\ref{thm:pw-cplx} now follows from Theorem~\ref{thm:bochner} applied to $\wedge^{p,q}TM$, keeping in mind that $t>0$ for the Hodge Laplacian on $(p,q)$-forms.

\begin{remark}\label{rem:maxPW-SOn}
In light of Theorem~\ref{thm:bochner}, it is natural to ask which $\G$-representations $\pi$, with $\G=\SO(n)$ or $\Spin(n)$, \emph{maximize} $PW(\pi)$. 
From \eqref{eq:PWforSOn}, with $m=\floor{\frac{n}{2}}$, 
\begin{equation*}
\textstyle\big(n-1 -PW(\pi)\big)(a_1^2+\dots a_m^2)=\sum\limits_{j=1}^m (n-2)a_j^2-(n-2j)a_j\geq \sum\limits_{j=1}^m (n-2j)(a_j^2-a_j)\geq 0,
\end{equation*}
since $a_j^2\geq a_j$ because $a_j\in\Z$.
Thus, $PW(\pi)\leq n-1$, and equality is achieved if and only if $\pi$ is the irreducible representation of highest weight $\omega_1$, $\omega_{m-1}$, or $\omega_m$.
\end{remark}

\section{Twisted spinors}
\label{sec:twistedDirac}

In this section, we prove a general vanishing theorem (Theorem~\ref{GeneralVanishing}) for twisted spinors on closed Riemannian spin manifolds using the representation-theoretic approach to the Bochner technique discussed in Section~\ref{sec:bochner}, namely Proposition~\ref{lowerbound}.
As a consequence of this general result, we prove Theorem~\ref{mainthm:A} in the Introduction.

\subsection{Representation theory of twisted spinors}\label{sec: spinor-rep}
Twisted Dirac operators act on vector bundles $S\otimes E_\pi$ constructed from the representation $\pi_S\otimes \pi$ as explained in Section~\ref{sec:weitzenbock}, where $\pi_S$ is the spinor representation (see Example~\ref{ex:reptheorySO2m}).

An essential step to compute the Petersen--Wink invariant of $\pi_S\otimes \pi$ is to decompose it as a sum of irreducible representations.
Consider, e.g., the tensor product $\pi_S\otimes\pi_{\omega_1}$
of $\pi_S\cong S^+\oplus S^-$ and the defining representation on $\C^n$, where $n=2m$ and $m\geq4$ is even.
Recall from Example~\ref{ex:reptheorySO2m} that $S^+$ and $S^-$ have highest weight $\omega_m$ and $\omega_{m-1}$, respectively.
By Pieri's formula, their tensor products decompose as
$\pi_{\omega_m}\otimes\pi_{\omega_1} \cong  \pi_{\omega_m+\omega_1}\oplus \pi_{\omega_{m-1}}$ and
$\pi_{\omega_{m-1}}\otimes\pi_{\omega_1} \cong  \pi_{\omega_{m-1}+\omega_1}\oplus \pi_{\omega_{m}}$, and hence
\begin{equation}\label{eq:PWSotimesomega1}
\begin{aligned}
PW(\pi_{\omega_m}\otimes\pi_{\omega_1})&=\min\!\big\{PW(\pi_{\omega_m+\omega_1}), PW(\pi_{\omega_{m-1}})\big\},\\
PW(\pi_{\omega_{m-1}}\otimes\pi_{\omega_1})&=\min\!\big\{PW(\pi_{\omega_{m-1}+\omega_1}), PW(\pi_{\omega_{m}})\big\},
\end{aligned}
\end{equation}
according to Definition~\ref{def:PW}. It follows from Proposition~\ref{prop:PWforSOn} that 
\begin{equation}\label{eq:PWpiomegamomega1}
\begin{aligned}
PW(\pi_{\omega_m+\omega_1})=PW(\pi_{\omega_{m-1}+\omega_1})&=\tfrac{n(n+7)}{n+16},\\ PW(\pi_{\omega_{m}})=PW(\pi_{\omega_{m-1}})&= n-1,
\end{aligned}
\end{equation}
and hence the minima in \eqref{eq:PWSotimesomega1} are achieved by the former, so
\begin{equation}\label{eq:PWpiSpiomega1}
PW(\pi_S\otimes\pi_{\omega_1})=\tfrac{n(n+7)}{n+16}.
\end{equation}

In what follows, we perform a similar computation for tensor products of $\pi_S$ with general alternating and symmetric powers of the defining representation $\pi_{\omega_1}\cong\C^n$. Recall from Example~\ref{ex:reptheorySO2m} the decompositions \eqref{eq:decomp-wedgep} and \eqref{eq:decomp-symp} of $\wedge^p\C^n$ and $\Sym^p\C^n$ into irreducible $\SO(n)$-representations, where $n=2m$ is even. For simplicity, we analyze only the components involving tensor products of $S^+\cong \pi_{\omega_m}$, as the computations for those involving $S^-\cong \pi_{\omega_{m-1}}$ are completely analogous and ultimately not needed. We use the convention that $\omega_0=0$, and $\pi_0$ is the trivial representation.

\begin{proposition}\label{prop:PWsympwedgep}
If $n=4k$, $m=2k$, and $k\geq2$, then
\begin{align}
	PW\big( \pi_{\omega_m}\otimes \wedge^p\pi_{\omega_1}\big) &=
	\tfrac{n^2+(8 p-1)n-8 p(p-1)}{n+16 p},
	\qquad 0\leq p\leq m,\label{eq:PWwedge}\\[3pt]
    PW\big( \pi_{\omega_m}\otimes \Sym^p\pi_{\omega_1}\big) &= 
	\tfrac{n^2+(8 p-1)n+8 p (p-1)}{n+8 p (p+1)},
    \qquad p\geq0.	\label{eq:PWsym}
\end{align}
\end{proposition}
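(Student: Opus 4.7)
The plan is to decompose both tensor products into $\Spin(n,\C)$-irreducibles, compute $PW$ on each summand via Proposition \ref{prop:PWforSOn}, and take the minimum as required by Definition \ref{def:PW}. Since the half-spinor representation $\pi_{\omega_m}$ is minuscule, Klimyk's formula yields
\[\pi_{\omega_m}\otimes \pi_\mu \;\cong\; \textstyle\bigoplus_{\nu}\, m_\nu\,\pi_{\omega_m+\nu},\]
summed (with weight multiplicities $m_\nu$) over weights $\nu$ of $\pi_\mu$ such that $\omega_m+\nu$ is dominant. Combined with the decompositions \eqref{eq:decomp-wedgep}--\eqref{eq:decomp-symp}, this reduces the problem to enumerating admissible $\nu$ and evaluating $PW(\pi_{\omega_m+\nu})$.

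For the wedge case, one writes a weight of $\wedge^p\C^n$ as $\nu=\sum c_j\varepsilon_j$ with $c_j\in\{-1,0,1\}$; dominance of $\omega_m+\nu$ then forces $c_1\geq\dots\geq c_{m-1}\geq 0$ together with $|\tfrac12+c_m|\leq\tfrac12+c_{m-1}$. Up to the choice $c_m\in\{0,-1\}$, with two exceptional summands when $p\in\{m-1,m\}$, the summands have highest weight $\omega_m+\eta_r$ or $\omega_{m-1}+\eta_r$, where $\eta_r=\varepsilon_1+\dots+\varepsilon_r$, and $r$ ranges over $\{0,1,\dots,p\}$. Using $m=n/2$ together with Proposition \ref{prop:PWforSOn}, a direct computation shows both families yield the common value
\[f(r)=\frac{n^2+(8r-1)n-8r(r-1)}{n+16r}.\]
Differentiating, $f'(r)$ has the sign of $-(n^2-3n+2rn+16r^2)$, which is negative for $n\geq 8$ and $r\geq 0$; hence $\min_r f(r)=f(p)$, establishing \eqref{eq:PWwedge}.

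For the symmetric case, weights $\nu=\sum c_j\varepsilon_j$ of $\Sym^p\C^n$ have $c_j\in\Z$ with $\sum|c_j|\leq p$ of matching parity, and dominance again forces $c_1\geq\dots\geq c_{m-1}\geq 0$ with a corresponding bound on $c_m$. The Cartan summand $\nu=p\omega_1=p\varepsilon_1$ is always present, and Proposition \ref{prop:PWforSOn} evaluates to $PW(\pi_{\omega_m+p\omega_1})=1+\frac{(n-2)(n+8p)}{n+8p(p+1)}$, which simplifies to \eqref{eq:PWsym}.

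The main obstacle is to show that no other admissible $\nu$ yields a smaller $PW$ in the symmetric case. Setting $s=\sum c_j$, $T=\sum jc_j$, and $Q=\sum c_j^2$, one has
\[PW(\pi_{\omega_m+\nu})-1=\frac{n(n-2)/8+ns-2T}{n/8+s+Q}.\]
For fixed $s$, the partition $s\,\varepsilon_1$ simultaneously minimizes $T$ at $s$ and maximizes $Q$ at $s^2$; an exchange argument moving a unit of weight from position $i$ to position $j<i$ should confirm that it extremizes the ratio, reducing to a scalar inequality verifiable by direct algebra. Varying $s\in\{p,p-2,\dots\}$, the one-parameter family $s\mapsto PW(\pi_{\omega_m+s\varepsilon_1})$ is strictly decreasing by a quotient-rule calculation analogous to the wedge case, so the overall minimum occurs at $s=p$, confirming \eqref{eq:PWsym}.
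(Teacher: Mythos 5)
Your strategy (bound $PW$ over the set of dominant weights of the form $\omega_m+\nu$ with $\nu$ a weight of the second factor, and note that the Cartan component attains the claimed value) can be made to work, and in spirit it is closer to the paper's \Cref{lem:PW-ineq} than to the paper's proof of this proposition, which instead uses the explicit Littlewood--Richardson decompositions \eqref{eq:decomp-omega_m,p}--\eqref{eq:decomp-omega_m,m-1} and \eqref{eq:decomp-omega_m,symp} and minimizes over the finitely many actual constituents. However, as written your argument has two genuine gaps. First, the decomposition you attribute to Klimyk is false: minusculity of $\pi_{\omega_m}$ gives a multiplicity-free sum over the weights of the \emph{minuscule} factor added to the highest weight of the other factor, not a sum over the weights of $\pi_\mu$ added to $\omega_m$. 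With your version, e.g.\ $\pi_{\omega_m}\otimes\wedge^2\C^{2m}$ would contain $\pi_{\omega_m}$ with multiplicity $m$ (the multiplicity of the zero weight in $\wedge^2\C^{2m}$), whereas the true multiplicity is $1$; in general non-dominant terms in the Brauer--Klimyk formula reflect with signs and cancel, they are not simply dropped. What you actually need --- that every constituent of $\pi_{\omega_m}\otimes\pi_\mu$ has highest weight $\omega_m+\nu$ with $\nu$ a weight of $\pi_\mu$, and that the Cartan component occurs --- is true, but it requires a correct justification (PRV/path-model, the honest Brauer--Klimyk bookkeeping, or the explicit decompositions the paper uses).

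Second, and more seriously, the minimization in the symmetric case is not carried out. Your claim that for fixed $s=\sum c_j$ the weight $s\varepsilon_1$ minimizes $T$ and maximizes $Q$ fails once weights with negative last coordinate are admitted: $\nu=\varepsilon_1-\varepsilon_m$ (a weight of $\Sym^2\C^n$ with $\omega_m+\nu$ dominant) has $s=0$, $T=1-m<0$ and $Q=2>0$, and indeed $PW(\pi_{\omega_m+\varepsilon_1-\varepsilon_m})=1+\tfrac{(n-2)(n+8)}{n+16}<n-1=PW(\pi_{\omega_m})$, so the fixed-$s$ minimum is \emph{not} at $s\varepsilon_1$; your two-step reduction (optimize at fixed $s$, then vary $s$) therefore collapses. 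Moreover, even for genuine partitions, minimizing $T$ and maximizing $Q$ push the numerator and the denominator of your ratio in the \emph{same} direction, so these extremal properties do not by themselves control the ratio --- the ``exchange argument \dots should confirm'' is precisely the missing proof. To close the gap you must compare the ratio over \emph{all} admissible weights (mixed signs included), which the paper does either by restricting to the actual constituents $q\omega_1+\omega_m$, $q\omega_1+\omega_{m-1}$ with $0\leq q\leq p$ via \eqref{eq:decomp-omega_m,symp}, or, in \Cref{lem:PW-ineq}, by a concavity-over-a-simplex argument evaluated at vertices. (Your wedge computation is essentially fine modulo the first issue; note also that the extra summands at $p\in\{m-1,m\}$ are of the form $\omega_m+\eta_{m-1}$, $\omega_m+\eta_m$, $\omega_{m-1}+\eta_{m-1}$, so they are covered by the same function $f(r)$ evaluated at $r=m-1,m$, a check you should state rather than omit.)
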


\begin{proof}
The decomposition of $\pi_{\omega_m}\otimes \wedge^p\pi_{\omega_1}$ into irreducibles follows from \eqref{eq:decomp-wedgep} and the following instances of the Littlewood--Richardson rule (see~\cite{littelmann}):
\begin{align}
    \pi_{\omega_m}\otimes \pi_{\omega_p} &\cong \textstyle\bigoplus\limits_{j=0}^p \pi_{\omega_{p-j} + \omega_{m- (j - 2\floor{ j/2}) }}, \qquad 1\leq p \leq m-2,\label{eq:decomp-omega_m,p} \\
    \pi_{\omega_m}\otimes \pi_{\omega_{m-1}+\omega_m} &\cong\textstyle \pi_{\omega_{m-1}+2\omega_m} \oplus \bigoplus\limits_{j=1}^{m-1} \pi_{\omega_{m-j-1}+ \omega_{m- (j - 2\floor{ j/2})}},\label{eq:decomp-omega_m,m-1,m} \\
    \pi_{\omega_m}\otimes \pi_{2\omega_{m}} &\cong \pi_{3\omega_{m}} \textstyle\oplus \bigoplus\limits_{j=1}^{m/2} \pi_{\omega_{m-2j} + \omega_{m}},\label{eq:decomp-omega_m,m}\\
    \pi_{\omega_m}\otimes \pi_{2\omega_{m-1}} &\cong \pi_{2\omega_{m-1}+\omega_{m}} \oplus \textstyle\bigoplus\limits_{j=1}^{m/2-1} \pi_{\omega_{m-2j-1} + \omega_{m-1}},\label{eq:decomp-omega_m,m-1}
\end{align}
cf.~\cite[Thm $\mathcal O(2n)$]{kempf-ness}.

By \eqref{eq:PWforSOn}, the Petersen--Wink invariant of the above irreducibles is given by
\begin{align}
        PW(\pi_{\omega_p+ \omega_m})&=PW(\pi_{ \omega_p+\omega_{m-1}})= \tfrac{2 m^2+ (8 p-1)m-4 p(p-1)}{m+8 p},\label{eq:pw-2}\\ 
        PW(\pi_{q\omega_{m-1}+\ell\omega_m}) &= \tfrac{2 m^2 (q+\ell)+m (q+\ell-2) (q+\ell)-4 q \ell}{m (q+\ell)^2-4 q \ell},\label{eq:pw-3} 
\end{align}
where $1\leq p\leq m-2$, $q,\ell\geq0$, and $(q,\ell)\neq(0,0)$.
We proceed case-by-case, following \eqref{eq:decomp-wedgep}.
First, if $1\leq p\leq m-2$, then $\wedge^p\pi_{\omega_1}\cong\pi_{\omega_p}$, so by \eqref{eq:decomp-omega_m,p} and \eqref{eq:pw-2},
\begin{equation}\label{eq:pw-sp-pform}
\begin{aligned}
PW\big( \pi_{\omega_m}\otimes \wedge^p\pi_{\omega_1}\big) &= \min_{0\leq j\leq p} PW(\pi_{\omega_{p-j}+\omega_{m- (j - 2\floor{ j/2})}} )\\
&=\min_{ 0\leq j\leq p }  
\tfrac{2 m^2+ (8 p-8j-1)m-4 (p-j-1) (p-j)}{m+8 (p-j)} \\
&=\tfrac{2 m^2+ (8 p-1)m-4 (p-1) p}{m+8 p}=\tfrac{n^2+(8 p-1)n-8p (p-1)}{n+16 p},
\end{aligned}
\end{equation}
as the minimum above is achieved at $j=0$. 

Second, if $p=m-1$, then $\wedge^{p}\pi_{\omega_1}\cong\pi_{\omega_{m-1}+\omega_m}$, so using \eqref{eq:decomp-omega_m,m-1,m}, \eqref{eq:pw-2}, and \eqref{eq:pw-3},
\begin{multline*}
PW\big( \pi_{\omega_m}\otimes \wedge^{p}\pi_{\omega_1}\big)=\\
 = \min\left\{ PW(\pi_{\omega_{m-1}+2\omega_m}), \min_{1\leq j\leq m-1} PW(\pi_{\omega_{m-j-1}+ \omega_{m- (j - 2\floor{ j/2})} })\right\}\\
=\min\left\{ \tfrac{6 m^2+3 m-8}{9 m-8}, \, \min_{1\leq j\leq m-1} \tfrac{3 m (2 m+1)-4 j (j+3)-8}{9 m-8 j-8} \right\}=\tfrac{6 m^2+3 m-8}{9 m-8}.
\end{multline*}
Finally, if $p=m$, then $\wedge^p\pi_{\omega_1}\cong\pi_{2\omega_{m}}\oplus\pi_{2\omega_{m-1}}$, so using \eqref{eq:decomp-omega_m,m} -- \eqref{eq:pw-3},
\begin{multline*}
PW\big( \pi_{\omega_m}\otimes \wedge^{p}\pi_{\omega_1}\big)=\\
 = \min\left\{ PW(\pi_{3\omega_{m}}),  PW(\pi_{2\omega_{m-1}+\omega_m}), \min_{2\leq j\leq m} PW(\pi_{\omega_{m-j}+ \omega_{m }} )\right\}\\
=\min\left\{ \tfrac{2 m+1}{3}, \, \tfrac{6 m^2+3 m-8}{9 m-8}, \, \min_{2\leq j\leq m}\tfrac{3 m (2 m+1)-4 j (j+1)}{9 m-8 j} \right\} =\tfrac{2 m+1}{3}.
\end{multline*}
Note that both of the above coincide with the values assumed by the last line of \eqref{eq:pw-sp-pform} setting $p=m-1$ and $p=m$, respectively, which concludes the proof of~\eqref{eq:PWwedge}.

The decomposition of $\pi_{\omega_m}\otimes \Sym^p\pi_{\omega_1}$ into irreducibles follows from \eqref{eq:decomp-symp} together with $\pi_{\omega_m}\otimes \pi_{q\omega_1} \cong  \pi_{q \omega_1+\omega_m}\oplus \pi_{(q-1)\omega_1+\omega_{m-1}}$, $q\geq1$, which is a simple consequence of the Littlewood--Richardson rule; 
namely,
\begin{equation}\label{eq:decomp-omega_m,symp}
\begin{aligned}
	 \pi_{\omega_m}\otimes \Sym^p\pi_{\omega_1}&\cong \textstyle\bigoplus\limits_{j=0}^{\floor{p/2}}\big( \pi_{\omega_m} \otimes \pi_{(p-2j)\omega_1}\big)\\
	&\cong\textstyle\bigoplus\limits_{j=0}^{\floor{p/2}} \pi_{(p-2j)\omega_1+\omega_m}\oplus \bigoplus\limits_{j=0}^{\floor{p/2}-1} \pi_{(p-2j-1)\omega_1+\omega_{m-1}}.
\end{aligned}
\end{equation}
By \eqref{eq:PWforSOn}, the Petersen--Wink invariant of these irreducibles is given by
\begin{equation}\label{eq:pw-1}
	PW(\pi_{q\omega_1+\omega_m})=PW(\pi_{q\omega_1+\omega_{m-1}})= \tfrac{2 m^2+(8 q-1) m+4 q(q-1)}{m+4 q (q+1)}, \; q\geq0.
\end{equation}
Thus, by \eqref{eq:decomp-omega_m,symp} and \eqref{eq:pw-1},
\begin{multline*}
PW\big( \pi_{\omega_m}\otimes \Sym^p\pi_{\omega_1}\big)= \min_{0\leq q\leq p}  PW(\pi_{q\omega_1+\omega_m})=\min_{0\leq q\leq p}\tfrac{2 m^2+(8 q-1) m+4 q(q-1)}{m+4 q (q+1)} \\
=\tfrac{2 m^2+ (8 p-1) m+4 p(p-1)}{m+4 p (p+1)}=\tfrac{n^2+(8 p-1)n+8 p (p-1)}{n+8 p (p+1)},
\end{multline*}
since the above minimum is achieved at $q=p$, concluding the proof of \eqref{eq:PWsym}
\end{proof}

\begin{remark}\label{rem:comparePWs}
From Proposition~\ref{prop:PWsympwedgep}, $PW\big( \pi_{\omega_m}\otimes \wedge^p\pi_{\omega_1}\big)\geq PW\big(\pi_{\omega_m}\otimes \Sym^p\pi_{\omega_1}\big)$ for all $0\leq p\leq m$, with equality if $p=0$ or $p=1$,
and strict inequality if $2\leq p\leq m$.
\end{remark}

Note that the constants defined in \eqref{eq:apbp} are precisely \eqref{eq:PWsym} and \eqref{eq:pw-symp}, namely:
\begin{equation}\label{eq:apbp-new}
r_p=PW\big(\pi_{\omega_m}\otimes \Sym^p\pi_{\omega_1}\big) \quad \text{ and }\quad r'_p= PW\big(\Sym^p\pi_{\omega_1}\big).
\end{equation}

\subsection{General vanishing theorem}
Let $(M^n,\g)$ be a closed Riemannian spin manifold with curvature operator $R\colon \wedge^2 TM\to\wedge^2 TM$, and $\G$ be a connected compact Lie subgroup of $\Spin(n)$ that contains $\operatorname{Hol}(M^n,\g)$, as in Section~\ref{sec:weitzenbock}.
Let $S\to M$ be the complex spinor bundle, \(E_\pi\to M\) be another complex vector bundle defined by a unitary $\G$-representation \(\pi\), and consider the bundle \(S\otimes E_\pi\) defined by the tensor product $\G$-representation \(\pi_S\otimes\pi\).
Using Clifford multiplication, the linearization of \(\pi_S\) can be written in terms of an orthonormal basis \(e_i\) of \(\R^{n}\), see \cite[Cor.~I.6.3]{lm-book}:
\begin{equation}\label{sderiv}
\dd\pi_S(e_i\wedge e_j)=\tfrac{1}{2}e_ie_j.
\end{equation}
The twisted Dirac operator \(D_\pi=D_{E_\pi}\) acts on sections of \(S\otimes E_\pi,\) and satisfies $D_{\pi}^2=\nabla^*\nabla+\mathcal{R_\pi}$, where 
\begin{equation}\label{eq:R-HitchinDirac}
\mathcal{R_\pi}=\textstyle\sum\limits_{i<j}(e_ie_j\otimes 1)\circ R^{S\otimes E_\pi}_{e_i,e_j},
\end{equation}
see \cite[p.~155]{lm-book}. Here, \(R^{S\otimes E_\pi}\) is the curvature tensor defined by the connection on \(S\otimes E_\pi\) induced by the representation, hence \(R^{S\otimes E}_{e_i,e_j}=-\dd(\pi_S\otimes \pi) (R(e_i\wedge e_j)).\)

\begin{lemma}\label{HitchinDirac}
The curvature term \eqref{eq:R-HitchinDirac} can be written in terms of \eqref{eq:krrho} as follows:
\begin{equation*}
    \mathcal{R}_\pi=K(R,\pi_{S}\otimes \pi)+\tfrac{\scal}{8}\operatorname{Id}-1\otimes K(R,\pi)
\end{equation*}
\end{lemma}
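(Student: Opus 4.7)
The plan is to rewrite both $\mathcal{R}_\pi$ from \eqref{eq:R-HitchinDirac} and $K(R,\pi_S\otimes\pi)$ from \eqref{eq:krrho} in a common ``mixed'' form and compare.
First, I translate \eqref{eq:R-HitchinDirac} from Clifford language into representation-theoretic language. Using \eqref{sderiv} to replace $e_ie_j = 2\,\mathrm d\pi_S(e_i\wedge e_j)$, substituting the identity $R^{S\otimes E_\pi}_{e_i,e_j} = -\mathrm d(\pi_S\otimes\pi)(R(e_i\wedge e_j))$, and observing that $\{e_i\wedge e_j : i<j\}$ is an orthonormal basis of $\mathfrak{so}(n)\supset \mathfrak g$ with respect to the paper's convention $\langle X,Y\rangle=\tfrac12\operatorname{Re}\operatorname{tr}(XY^*)$, one obtains
\[
\mathcal R_\pi \;=\; -2\sum_a \bigl(\mathrm d\pi_S(X_a)\otimes 1\bigr)\circ \mathrm d(\pi_S\otimes\pi)\bigl(R(X_a)\bigr),
\]
where $\{X_a\}$ is any orthonormal basis of $\mathfrak g$ (basis vectors in $\mathfrak g^\perp\subset \mathfrak{so}(n)$ contribute nothing, since $\mathrm{Hol}(M,\g)\subset\G$ forces $R$ to take values in $\mathfrak g$).

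Next, I apply the tensor-product Leibniz rule $\mathrm d(\pi_S\otimes \pi)(Y)=\mathrm d\pi_S(Y)\otimes 1+1\otimes \mathrm d\pi(Y)$ twice: once inside the expression for $\mathcal R_\pi$ just derived, and once inside the defining expression \eqref{eq:krrho} for $K(R,\pi_S\otimes \pi)$. Expanding the latter and using $R_{ab}=R_{ba}$ to symmetrize the cross terms gives
\[
K(R,\pi_S\otimes\pi) \;=\; K(R,\pi_S)\otimes 1 + 1\otimes K(R,\pi) \;-\; 2\sum_{a,b} R_{ab}\,\mathrm d\pi_S(X_a)\otimes \mathrm d\pi(X_b).
\]
A parallel expansion of $\mathcal R_\pi$, again invoking the symmetry of $R$ to collect the pure-spinor piece as $K(R,\pi_S)$ (and not $-K(R,\pi_S)$), yields
\[
\mathcal R_\pi \;=\; 2\,K(R,\pi_S)\otimes 1 \;-\; 2\sum_{a,b} R_{ab}\,\mathrm d\pi_S(X_a)\otimes \mathrm d\pi(X_b).
\]

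Subtracting the two identities cancels the mixed double sum and leaves
\[
\mathcal R_\pi - K(R,\pi_S\otimes\pi) \;=\; K(R,\pi_S)\otimes 1 \;-\; 1\otimes K(R,\pi).
\]
The final step is to invoke \Cref{ex:vanillaDirac} (the classical Lichnerowicz computation) to replace $K(R,\pi_S)$ by $\tfrac{\scal}{8}\operatorname{Id}$, yielding the claimed formula. The only real obstacle is bookkeeping: one must carefully track the factor of $2$ arising from \eqref{sderiv} against the factor of $2$ produced by symmetrizing the off-diagonal terms via $R_{ab}=R_{ba}$. These factors conspire so that identical mixed quantities appear in $\mathcal R_\pi$ and in $K(R,\pi_S\otimes \pi)$, after which the identity closes up immediately.
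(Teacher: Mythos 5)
Your proof is correct and follows essentially the same route as the paper: rewrite $\mathcal R_\pi$ via \eqref{sderiv}, expand $\dd(\pi_S\otimes\pi)$ by the Leibniz rule, use the symmetry $R_{ab}=R_{ba}$ to match the mixed terms, and finish with $K(R,\pi_S)=\tfrac{\scal}{8}\operatorname{Id}$ from \Cref{ex:vanillaDirac}. The only difference is cosmetic bookkeeping—you expand both $\mathcal R_\pi$ and $K(R,\pi_S\otimes\pi)$ and subtract, while the paper transforms $\mathcal R_\pi$ step by step until $K(R,\pi_S\otimes\pi)$ appears—and your factor-of-$2$ accounting checks out.
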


\begin{proof}
Using \eqref{sderiv} and expanding the linearized tensor product of representations,
\begin{align*}
\mathcal{R}_\pi&=-2\textstyle\sum\limits_{i<j}\left(\dd\pi_S(e_i\wedge e_j)\otimes 1\right)\circ \dd(\pi_S\otimes\pi)(R(e_i\wedge e_j))\\
&=-2\textstyle\sum\limits_{i<j}\left(\dd\pi_S(e_i\wedge e_j)\otimes 1\right)\circ \left(\dd\pi_S(R(e_i\wedge e_j))\otimes 1+1\otimes \dd\pi(R(e_i\wedge e_j))\right)\\
&=-\textstyle\sum\limits_{i<j}\left(\dd\pi_S(e_i\wedge e_j)\otimes 1\right)\circ \left(\dd\pi_S(R(e_i\wedge e_j))\otimes 1+1\otimes \dd\pi(R(e_i\wedge e_j))\right)\\
&\qquad+K(R,\pi_S)\otimes 1-\textstyle\sum\limits_{i<j}\dd\pi_S(e_i\wedge e_j)\otimes \dd\pi(R(e_i\wedge e_j)).
\end{align*}
Since \(R\) is symmetric, one sees that 
\[\textstyle\sum\limits_{i<j}\dd\pi_S(e_i\wedge e_j)\otimes \dd\pi(R(e_i\wedge e_j))=\sum\limits_{i<j}\dd\pi_S(R(e_i\wedge e_j))\otimes \dd\pi(e_i\wedge e_j).\]
Adding and subtracting \(1\otimes K(R,\pi)\) in the second and third lines below, respectively,
\begin{align*}
\mathcal{R}_\pi&=-\textstyle\sum\limits_{i<j}\left(\dd\pi_S(e_i\wedge e_j)\otimes 1\right)\circ \left(\dd\pi_S(R(e_i\wedge e_j))\otimes 1+1\otimes \dd\pi(R(e_i\wedge e_j))\right)\\
&\qquad -\textstyle\sum\limits_{i<j}(1\otimes \dd\pi(e_i\wedge e_j))\circ(\dd\pi_S(R(e_i\wedge e_j))\otimes1+1\otimes \dd\pi(R(e_i\wedge e_j)))\\
&\qquad +K(R,\pi_S)\otimes1-1\otimes K(R,\pi)\\
&=K(R,\pi_{S}\otimes \pi)+K(R,\pi_S)\otimes1-1\otimes K(R,\pi).
\end{align*}
To conclude, recall \(K(R,\pi_{S})=\frac{\scal}{8}\operatorname{Id}\), see  Example~\ref{ex:vanillaDirac} or \cite[Thm.~II.8.8]{lm-book}.
\end{proof}

We now prove a general vanishing theorem for twisted spinors (Theorem~\ref{GeneralVanishing}) which is the primary ingredient in the proof of Theorem~\ref{mainthm:A}. Furthermore, if twisting with \(\Sym^p TM_\C\) or \(\wedge^pTM_\C\), the hypotheses below are \emph{weaker} than those stated in Theorem~\ref{mainthm:A}, since one may
obtain a curvature expression $c(R)\geq C_p(R)$ directly using the computations in the proof of Proposition~\ref{prop:PWsympwedgep} together with \eqref{eq:pw-wedgep} or \eqref{eq:pw-symp}.
  
\begin{theorem}\label{GeneralVanishing}
    Let \((M^{n},\g)\) be a closed Riemannian spin manifold of dimension \linebreak
    $n=4k$, \(k\geq 2\), with curvature operator $R$, let \(\G\) be a connected compact Lie subgroup of $\Spin(n)$ containing $\mathrm{Hol}(M,\g)$, and \(\pi\) be an irreducible unitary \(\G\)-representation with highest weight $\lambda\in P_{++}(\G_\C)$. 
Consider the decomposition \(\pi_S\otimes \pi =\bigoplus_i\pi_{\lambda_i}\) into irreducible $\G$-representations, and let $c(R)\colon M\to\R$ be given by
    \begin{equation}\label{eq: dirac-vanishing-condition}
    c(R)=\min_i\big\{\|\lambda_{i}\|^2\,\Sigma({PW_\G(\pi_{\lambda_i})},R)\big\}+\tfrac{\scal}{8}+\|\lambda\|^2\,\Sigma(PW_\G(\pi),-R),
    \end{equation}
    with $\|\lambda_{i}\|^2\,\Sigma({PW_\G(\pi_{\lambda_i})},R)=0$ if $\lambda_i=0$. 
    If $c(R)\geq0$ and \(E\subseteq E_\pi\) is a parallel complex subbundle, then \(\hat{A}(M,E)=0\), or $c(R)\equiv0$ and \(S\otimes E\) has a nontrivial parallel~section.
\end{theorem}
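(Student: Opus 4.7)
The plan is to apply the Atiyah--Singer Index Theorem together with a Lichnerowicz--Bochner argument to the twisted Dirac operator on $S\otimes E$, using the Weitzenb\"ock decomposition from \Cref{HitchinDirac} and the eigenvalue bound from \Cref{lowerbound}. Since $E\subseteq E_\pi$ is parallel, the Levi-Civita-induced connection restricts to $S\otimes E$ and yields a twisted Dirac operator $D_E$ satisfying $D_E^2=\nabla^*\nabla+\mathcal R_E$, where $\mathcal R_E$ is the restriction of $\mathcal R_\pi$ from \eqref{eq:R-HitchinDirac}. By Atiyah--Singer, $\hat A(M,E)$ equals the index of $D_E^+$; in particular, if $\hat A(M,E)\neq 0$, then $\ker D_E$ contains a nontrivial section $\phi$.

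The key step is to establish the pointwise inequality $\mathcal R_\pi\succeq c(R)\operatorname{Id}$. By \Cref{HitchinDirac},
\begin{equation*}
\mathcal R_\pi=K(R,\pi_S\otimes\pi)+\tfrac{\scal}{8}\operatorname{Id}-1\otimes K(R,\pi).
\end{equation*}
\Cref{prop:basicsKRpi}(iii), applied to the decomposition $\pi_S\otimes\pi=\bigoplus_i\pi_{\lambda_i}$, shows that $K(R,\pi_S\otimes\pi)$ is block-diagonal with blocks $K(R,\pi_{\lambda_i})$. By \Cref{lowerbound}, each nontrivial block satisfies $K(R,\pi_{\lambda_i})\succeq\|\lambda_i\|^2\Sigma(PW_\G(\pi_{\lambda_i}),R)\operatorname{Id}$, while trivial blocks vanish (\Cref{ex:trivial}), so
\begin{equation*}
K(R,\pi_S\otimes\pi)\succeq\min_i\big\{\|\lambda_i\|^2\,\Sigma(PW_\G(\pi_{\lambda_i}),R)\big\}\operatorname{Id}.
\end{equation*}
For the last term, the linearity of $R\mapsto K(R,\pi)$ evident from \eqref{eq:krrho} gives $-K(R,\pi)=K(-R,\pi)$; applying \Cref{lowerbound} to $-R$ with the irreducible $\pi$ of highest weight $\lambda$ yields $-K(R,\pi)\succeq\|\lambda\|^2\,\Sigma(PW_\G(\pi),-R)\operatorname{Id}$. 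Summing the three contributions gives $\mathcal R_\pi\succeq c(R)\operatorname{Id}$, which restricts to $\mathcal R_E\succeq c(R)\operatorname{Id}$ on $S\otimes E$.

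Finally, the Bochner integral argument closes the proof. For the harmonic twisted spinor $\phi$ above,
\begin{equation*}
0=\int_M\|D_E\phi\|^2=\int_M\|\nabla\phi\|^2+\langle\mathcal R_E\phi,\phi\rangle\geq\int_M\|\nabla\phi\|^2+c(R)\|\phi\|^2.
\end{equation*}
Since $c(R)\geq 0$ by hypothesis, both nonnegative terms must vanish pointwise: $\nabla\phi\equiv 0$ and $c(R)\|\phi\|^2\equiv 0$. Because $\phi$ is parallel and nontrivial and the connection is metric-compatible, $\|\phi\|$ is a positive constant, forcing $c(R)\equiv 0$ on $M$; then $\phi$ is the required nontrivial parallel section of $S\otimes E$. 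Hence if $\hat A(M,E)\neq 0$ we are forced into the second alternative of the statement. The only real subtlety is the block-diagonal decomposition argument: applying \Cref{lowerbound} to the single tensor-product representation $\pi_S\otimes\pi$ would produce a weaker bound than taking the minimum over irreducibles $\pi_{\lambda_i}$; it is precisely this refinement that yields the sharp quantity $c(R)$ entering the hypothesis.
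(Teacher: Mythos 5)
Your proof is correct and follows essentially the same route as the paper: Atiyah--Singer applied to $D_E$ on $S\otimes E$, the pointwise bound $\mathcal R_\pi\succeq c(R)\operatorname{Id}$ obtained by combining \Cref{HitchinDirac}, \Cref{prop:basicsKRpi}(iii), and \Cref{lowerbound} (applied both to the irreducible blocks of $\pi_S\otimes\pi$ and, via linearity in $R$, to $-K(R,\pi)$), and the standard Bochner integral argument forcing $\nabla\phi\equiv0$ and $c(R)\equiv0$. You merely spell out the block-diagonal estimate and the final step (parallel $\phi$ has constant positive norm) that the paper leaves implicit.
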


\begin{proof}
Using the splitting \(S=S^+\oplus S^-\), and the fact that \(E\subseteq E_\pi\) is parallel, 
we can restrict \(D_{\pi}\) to \(S^\pm\otimes E\) to obtain
\(D_{E}^\pm\colon S^\pm\otimes E\to S^\mp\otimes E.\)
By the Atiyah--Singer Index Theorem, see e.g.~\cite[Thm.~13.10]{lm-book}, 
\[\operatorname{ind}(D_{E}^+)=\dim\ker D_{E}^+ - \dim \ker D_{E}^-=\hat{A}(M,E).\] 
If the above is nonzero, then \(\ker{D_{E}}\subset\ker{D_\pi}\) contains a section \(\phi\not\equiv0.\) Combining Lemma~\ref{HitchinDirac} and Proposition~\ref{lowerbound}, the hypotheses imply that \(\mathcal{R}_{\pi}\geq c(R)\operatorname{Id}\). The standard Bochner argument, cf.~\eqref{eq:bochner}, implies that \(\nabla\phi\equiv0\) and \(\mathcal{R}_{\pi}\) has nontrivial kernel at all points; in particular, \(c(R)\equiv 0\).
\end{proof}

\subsection{On the proof of Theorem \ref{mainthm:A}}
First, we address the case \(p=1\), combining the proof of Theorem~\ref{GeneralVanishing} with an analysis of the representations involved if \(\pi=\pi_{\omega_1}\).  

\begin{theorem}\label{general-T}
	Let \((M^{n},\g)\) be a closed Riemannian spin manifold of dimension $n=4k$, \(k\geq2\), with curvature operator \(R\), and $\Ric\preceq \mu\operatorname{Id}$. Set $r_1=\tfrac{n(n+7)}{n+16}$, and 
	\begin{equation*} 
	C_1(R)=\min\left\{\left(\tfrac{n}{8}+2\right)\Sigma({r_1},R),\tfrac\scal8\right\} +\tfrac{\scal}{8} -\mu.
	\end{equation*}
	If $C_1(R)\geq0$ and \(E\subseteq TM_\C\) is a parallel complex subbundle, then \(\hat{A}(M,E)=0\), or else $C_1(R)\equiv0$ and \(S\otimes E\) has a nontrivial parallel section.
\end{theorem}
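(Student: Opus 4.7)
The strategy is to mimic the proof of \Cref{GeneralVanishing} with $\G=\Spin(n)$ and $\pi=\pi_{\omega_1}$ (the complexified defining representation, so $E_\pi = TM_\C$), but sharpening one of its inputs: rather than estimating $K(R,\pi_{\omega_1})$ via the generic lower bound of \Cref{lowerbound}, I will use the exact identity $K(R,\pi_{\omega_1})=\Ric$ from \Cref{ex:TM} together with the hypothesis $\Ric\preceq\mu\operatorname{Id}$. This is what replaces the term $\|\omega_1\|^2\Sigma(PW(\pi_{\omega_1}),-R)=\Sigma(n-1,-R)$ in \eqref{eq: dirac-vanishing-condition} by the better term $-\mu$ appearing in $C_1(R)$.

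The first step is to decompose the twisted spinor representation. Set $m=n/2=2k$. Applying Pieri's rule to each of $\pi_{\omega_m}\otimes\pi_{\omega_1}\cong\pi_{\omega_m+\omega_1}\oplus\pi_{\omega_{m-1}}$ and $\pi_{\omega_{m-1}}\otimes\pi_{\omega_1}\cong\pi_{\omega_{m-1}+\omega_1}\oplus\pi_{\omega_m}$ yields
\begin{equation*}
\pi_S\otimes\pi_{\omega_1}\cong \pi_{\omega_m+\omega_1}\oplus\pi_{\omega_{m-1}+\omega_1}\oplus\pi_{\omega_m}\oplus\pi_{\omega_{m-1}}=\pi_{\omega_m+\omega_1}\oplus\pi_{\omega_{m-1}+\omega_1}\oplus\pi_S.
\end{equation*}
A direct computation using \Cref{ex:reptheorySO2m} gives $\|\omega_m\pm\omega_1\|^2=\tfrac{9}{4}+\tfrac{m-1}{4}=\tfrac{n}{8}+2$, and \Cref{prop:PWforSOn} yields $PW(\pi_{\omega_m\pm\omega_1})=\frac{n(n+7)}{n+16}=r_1$ (these are precisely \eqref{eq:PWpiomegamomega1} and \eqref{eq:PWpiSpiomega1}).

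The second step bounds each block of $K(R,\pi_S\otimes\pi_{\omega_1})$ (block diagonal by \Cref{prop:basicsKRpi}~(iii)): the two twisted blocks satisfy
\begin{equation*}
K(R,\pi_{\omega_m\pm\omega_1})\succeq \left(\tfrac{n}{8}+2\right)\Sigma(r_1,R)\operatorname{Id}
\end{equation*}
by \Cref{lowerbound}, whereas the spinor block gives $K(R,\pi_S)=\tfrac{\scal}{8}\operatorname{Id}$ exactly by \Cref{ex:vanillaDirac}. Hence $K(R,\pi_S\otimes\pi_{\omega_1})\succeq \min\!\big\{(\tfrac{n}{8}+2)\Sigma(r_1,R),\tfrac{\scal}{8}\big\}\operatorname{Id}$. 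Feeding this together with $K(R,\pi_{\omega_1})=\Ric\preceq\mu\operatorname{Id}$ into \Cref{HitchinDirac} yields
\begin{equation*}
\mathcal R_{\pi_{\omega_1}}=K(R,\pi_S\otimes\pi_{\omega_1})+\tfrac{\scal}{8}\operatorname{Id}-1\otimes\Ric\succeq C_1(R)\operatorname{Id}.
\end{equation*}

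For the final step, suppose $E\subseteq TM_\C$ is a parallel complex subbundle with $\hat A(M,E)\neq 0$. Since $E$ is parallel, the twisted Dirac operator $D_{\pi_{\omega_1}}$ restricts to a chiral operator $D_E^\pm\colon S^\pm\otimes E\to S^\mp\otimes E$, and by the Atiyah--Singer Index Theorem $\operatorname{ind}(D_E^+)=\hat A(M,E)\neq 0$, producing a nonzero section $\phi$ of $S\otimes E$ with $D_{\pi_{\omega_1}}\phi=0$. The Bochner-Lichnerowicz identity $D_{\pi_{\omega_1}}^2=\nabla^*\nabla+\mathcal R_{\pi_{\omega_1}}$ together with the estimate above and $C_1(R)\geq0$ forces $\nabla\phi\equiv0$ and the pointwise equality $C_1(R)\equiv 0$, as in the proof of \Cref{GeneralVanishing}. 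The main (mild) obstacle is purely bookkeeping: identifying the decomposition of $\pi_S\otimes\pi_{\omega_1}$ into irreducibles and verifying the weight-norm computation $\|\omega_m\pm\omega_1\|^2=\tfrac{n}{8}+2$; the rest is a direct assembly of previously established results.
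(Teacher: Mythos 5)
Your proposal is correct and follows essentially the same route as the paper's proof: decompose $\pi_S\otimes\pi_{\omega_1}$ via Pieri's rule, bound the blocks $K(R,\pi_{\omega_m+\omega_1})$ and $K(R,\pi_{\omega_{m-1}+\omega_1})$ by \Cref{lowerbound} while computing $K(R,\pi_S)=\tfrac{\scal}{8}\operatorname{Id}$ and $K(R,\pi_{\omega_1})=\Ric$ exactly, then feed everything into \Cref{HitchinDirac} and the index-theoretic Bochner argument of \Cref{GeneralVanishing}. The only blemish is notational: the weights in question are $\omega_m+\omega_1$ and $\omega_{m-1}+\omega_1$ (not ``$\omega_m\pm\omega_1$''), for which your norm computation $\tfrac94+\tfrac{m-1}{4}=\tfrac{n}{8}+2$ and the value $r_1$ of the Petersen--Wink invariant are indeed correct.
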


\begin{proof}
We argue as in Theorem~\ref{GeneralVanishing}, setting \(\pi=\pi_{\omega_1}\) to be the lift to $\G=\Spin(n)$ of the defining representation of \(\SO(n)\), so that $E_\pi=TM$. 
Letting $m=2k$, then
\[\pi_S\otimes\pi_{\omega_1}\cong \pi_{\omega_m+\omega_1}\oplus\pi_{\omega_{m-1}}\oplus \pi_{\omega_{m-1}+\omega_1}\oplus\pi_{\omega_{m}},\]
as explained in Section~\ref{sec: spinor-rep}.
Using Proposition~\ref{prop:basicsKRpi} (iii) and Example~\ref{ex:vanillaDirac}, we have
\begin{align*}
K(R,\pi_{S}\otimes\pi_{\omega_1})&=K(R,\pi_{\omega_m+\omega_1})\oplus K(R,\pi_{\omega_{m-1}})\oplus K(R,\pi_{\omega_{m-1}+\omega_1})\oplus K(R,\pi_{\omega_{m}})\\
&=K(R,\pi_{\omega_m+\omega_1})\oplus\left(\tfrac\scal 8\operatorname{Id}\right)\oplus K(R,\pi_{\omega_{m-1}+\omega_1})\oplus\left(\tfrac\scal 8\operatorname{Id}\right).
\end{align*}
Applying Proposition~\ref{lowerbound}, since 
$PW(\pi_{\omega_m+\omega_1})=PW(\pi_{\omega_{m-1}+\omega_1})=r_1$ by \eqref{eq:PWpiomegamomega1}, and $\|\omega_m+\omega_1\|^2=\|\omega_{m-1}+\omega_1\|^2=\frac{n}{8}+2$,
we obtain
\begin{equation*}
	K(R,\pi_{\omega_m+\omega_1})\succeq \left(\tfrac{n}{8}+2\right)\Sigma({r_1},R) \operatorname{Id}, \; \text{and} \; K(R,\pi_{\omega_{m-1}+\omega_1})\succeq \left(\tfrac{n}{8}+2\right)\Sigma({r_1},R) \operatorname{Id}.
\end{equation*}
Thus, $K(R,\pi_{S}\otimes\pi_{\omega_1})\succeq\min\left\{\left(\tfrac{n}{8}+2\right)\Sigma({r_1},R),\tfrac\scal8\right\}\operatorname{Id}$. 
From Example~\ref{ex:TM}, we have \(K(R,\pi_{\omega_1})=\Ric\).
Therefore, by Lemma~\ref{HitchinDirac}, we have \(\mathcal{R}_{\pi}\succeq C_1(R)\operatorname{Id}\succeq0\) and the conclusion follows as in the proof of Theorem~\ref{GeneralVanishing}.
\end{proof}

\begin{remark}
Despite similarities,  $C_1(R)$ does not coincide with $c(R)$ in \eqref{eq: dirac-vanishing-condition} for $\pi=\pi_{\omega_1}$, since we explicitly compute $K(R,\pi_{\omega_1})$, $K(R,\pi_{\omega_m})$ and $K(R,\pi_{\omega_{m-1}})$ in the proof above using Examples~\ref{ex:vanillaDirac} and \ref{ex:TM} instead of appealing to Proposition~\ref{lowerbound}. 
\end{remark}

Second, in order to address the case \(p\geq 2\), we need compare the relevant quantities in Theorem~\ref{GeneralVanishing} for different subrepresentations of the \(p\)-tensor representation $\pi_{\omega_1}^{\otimes p}$, and of their tensor products with the spinor representation $\pi_S\cong\pi_{\omega_m}\oplus\pi_{\omega_{m-1}}$.

\begin{lemma}\label{lem:PW-ineq}
Let $n=4k$, $k\geq2$. Recall the constants $r_p$ and $r'_p$ in \eqref{eq:apbp}, cf.\ \eqref{eq:apbp-new}, and the Weyl vector $\rho$ in \eqref{eq:rho}.
If \(\pi\) is an  nontrivial irreducible subrepresentation of the \(\SO(n)\)-representation \(\pi_{\omega_1}^{\otimes p}\), $p\geq 2$, with highest weight $\lambda\in P_{++}(\SO(n,\C))$, 
	\begin{equation*}
	PW(\pi)\geq r'_p, \quad \text{ and }\quad 0\leq\left<\lambda,\lambda+2\rho\right>\leq p(n+p-2).
	\end{equation*}
	Denoting also by $\pi$ its lift to $\Spin(n)$, let $\pi_S\otimes\pi=\bigoplus_i \pi_{\lambda_i}$ be the decomposition into irreducible components. Then
	\begin{equation*}
	PW(\pi_{\lambda_i})\geq r_p,\quad \text{ and }\quad \tfrac{n(n-1)}{8}\leq\left<\lambda_i,\lambda_i+2\rho\right>\leq
	p(n+p-1)+\tfrac{n(n-1)}{8}.
	\end{equation*}
\end{lemma}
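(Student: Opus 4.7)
The plan is to analyze the weights of $\pi_{\omega_1}^{\otimes p}$ and the irreducible components of $\pi_S\otimes\pi$ in the coordinates of \Cref{ex:reptheorySO2m}, exploiting that $\pi_S=\pi_{\omega_m}\oplus\pi_{\omega_{m-1}}$ is a sum of miniscule $\Spin(n,\C)$-representations. Write any weight as $\lambda=\sum_{j=1}^m a_j\varepsilon_j$, with $n=2m$.

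For the first half, I start with the observation that since the weights of $\pi_{\omega_1}$ are precisely $\pm\varepsilon_j$, every weight of $\pi_{\omega_1}^{\otimes p}$ has $a_j\in\Z$ with $\sum_j|a_j|\leq p$; this bound holds in particular for the highest weight $\lambda$ of any nontrivial irreducible subrepresentation. Expanding $\langle\lambda,\lambda+2\rho\rangle=\sum a_j^2+\sum a_j(n-2j)$ using \eqref{eq:rho}, the dominance $a_1\geq\dots\geq|a_m|\geq 0$ combined with $\sum|a_j|\leq p$ yields $\sum a_j^2\leq a_1 p\leq p^2$ and $\sum a_j(n-2j)\leq(n-2)\sum_j a_j\leq(n-2)p$, giving the upper bound $p(n+p-2)$; the lower bound $\geq 0$ is the standard nonnegativity of Casimir eigenvalues. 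The inequality $PW(\pi)\geq r_p'=(n+p-2)/p$ is equivalent to $p\sum a_j(m-j)\geq(m-1)\sum a_j^2$, which follows immediately from $\sum a_j(m-j)\geq a_1(m-1)$ (dominance, with the $j=m$ term contributing zero) and $\|\lambda\|^2\leq a_1 p$.

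For the second half, the minusculity of $\pi_{\omega_m}$ and $\pi_{\omega_{m-1}}$ ensures that each component $\pi_{\lambda_i}\subseteq\pi_S\otimes\pi$ has highest weight of the form $\lambda_i=\lambda+\mu$, where $\mu=\tfrac{1}{2}\sum c_j\varepsilon_j$ is a weight of $\pi_S$ with $c_j\in\{\pm1\}$ and $\lambda_i$ is dominant. Expanding $\langle\lambda_i,\lambda_i+2\rho\rangle=\langle\lambda,\lambda+2\rho\rangle+2\langle\lambda,\mu\rangle+\langle\mu,\mu+2\rho\rangle$ and using the first half together with $|2\langle\lambda,\mu\rangle|\leq\sum|a_j|\leq p$ and $\langle\mu,\mu+2\rho\rangle\leq\langle\omega_m,\omega_m+2\rho\rangle=n(n-1)/8$ (attained at $\mu=\omega_m$) yields the upper bound $p(n+p-1)+n(n-1)/8$. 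For the lower bound, I write $\langle\lambda_i,\lambda_i+2\rho\rangle=\|\lambda_i+\rho\|^2-\|\rho\|^2$; the half-integer dominance $b_1\geq\dots\geq b_{m-1}\geq|b_m|\geq 1/2$ of the components $b_j=a_j+c_j/2$ of $\lambda_i$ forces $\|\lambda_i+\rho\|^2$ to be minimized at $\lambda_i=\omega_m$ (or $\omega_{m-1}$), yielding exactly $n(n-1)/8$.

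The main obstacle is the Petersen--Wink lower bound $PW(\pi_{\lambda_i})\geq r_p$. I plan to parametrize $\lambda_i$ by $u_j=|b_j|-1/2\in\Z_{\geq 0}$; the sign-matching choice $c_j=\operatorname{sign}(b_j)$ shows that the constraint $\sum|a_j|\leq p$ translates to the feasibility $u_1\geq\dots\geq u_m\geq 0$ with $\sum u_j\leq p$. The PW inequality then becomes $F(u)\geq 0$ for an explicit quadratic that is concave in $u$ thanks to a $-\alpha\sum u_j^2$ term with $\alpha=r_p-1$, so its minimum over the convex polytope is attained at a vertex. The vertices are $u=0$ and the rectangular partitions $u=(p/k,\dots,p/k,0,\dots,0)$ for $k=1,\dots,m$, and direct calculation gives $F(p,0,\dots,0)=0$ while at each other vertex $F>0$; the sharpest competing case $u=(p/m,\dots,p/m)$ reduces to the polynomial inequality $(2p-1)m^2+2p(2p-3)m\geq 8p^2$, which holds for all $m\geq 4$ and $p\geq 1$ and is tight precisely at $(p,m)=(1,4)$.
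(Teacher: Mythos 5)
Your proposal is correct and follows essentially the same route as the paper: write all weights in the $\varepsilon_j$-coordinates, use $\sum_j|a_j|\le p$ for weights of $\pi_{\omega_1}^{\otimes p}$, and reduce $PW(\pi_{\lambda_i})\ge r_p$ to the nonnegativity of a concave quadratic on the simplex $\{u_1\ge\dots\ge u_m\ge 0,\ \sum_j u_j\le p\}$ (the paper's $\triangle_p$ translated by $(\tfrac12,\dots,\tfrac12)$), checked at the vertices with minimum value $0$ at $(p,0,\dots,0)$. Your direct two-line derivation of $PW(\pi)\ge r'_p$, your explicit vertex computation at $(p/m,\dots,p/m)$ giving $(2p-1)m^2+2p(2p-3)m\ge 8p^2$, and your use of $\lambda_i=\lambda+\mu$ via minusculity of the half-spin representations are only minor variations on the paper's argument, which instead invokes concavity over the simplex for both halves and uses only that $\lambda_i$ is a sum of a weight of $\pi$ and a weight of $\pi_S$.
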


\begin{proof}
Let $m=2k$. Since \(\lambda \in P_{++}(\SO(n,\C))\),
we have that \(\lambda=\sum_{j=1}^m a_j\varepsilon_j\) with \(a_1\geq a_2\geq\dots\geq|a_m|\geq0\) and $a_j\in\Z$ for all $1\leq j\leq m$, see Example~\ref{ex:reptheorySO2m}.
By Proposition~\ref{prop:PWforSOn}, the Petersen--Wink invariant \(PW(\pi)\) does not depend on the sign of \(a_m\) because $n=2m$ is even, see \eqref{eq:PWforSOn}, so we may assume \(a_m=|a_m|\geq0\).
Moreover, 
\(\lambda\) is a weight of \(\pi_{\omega_1}^{\otimes p}\), so it can be written as \(\lambda=\pm\varepsilon_{j_1}\pm\dots\pm\varepsilon_{j_p}\) and thus \(\sum_{j=1}^m a_j\leq p\).
Altogether, the vector $(a_1,\dots,a_m)$ of coefficients belongs to
\[
\triangle_p=\left\{(a_1,\dots,a_m)\in\R^m : a_1\geq a_2\geq \dots \geq a_{m}\geq 0\;\text{ and }\; \textstyle\sum\limits_{j=1}^m a_j\leq p\right\},\]
which is an $m$-simplex in $\R^m$; namely $\triangle_p$ is the convex hull of the vertices $v_0=0$ and \(v_q=\big(\frac{p}{q},\dots,\frac{p}{q},0,\dots,0\big)\in\R^q\oplus\{0\}\subseteq\R^m\), \(1\leq q\leq m\).
From \eqref{eq:rho}, we have
\begin{equation}\label{eq:concavefct}
p\left<\lambda,2\rho\right>-2(m-1)\|\lambda\|^2=2\textstyle\sum\limits_{j=1}^m\left(p (m-j)\, a_j -(m-1)\, a_j^2\right).
\end{equation}
We claim that the right-hand side of \eqref{eq:concavefct} is nonnegative for all $(a_1,\dots,a_m)\in\triangle_p$. Indeed, this is a concave function of $(a_1,\dots,a_m)$ and hence attains its minimum on $\triangle_p$ at a vertex $v_q$, $0\leq q\leq m$. It can be easily checked that this minimum is equal to zero and it is achieved at the vertices $v_0$ and $v_1$.
Therefore, as desired,
\[PW(\pi)=\tfrac{\left<\lambda,\lambda+2\rho\right>}{\|\lambda\|^2}\geq1+ \tfrac{2(m-1)}{p}= \tfrac{n+p-2}{p} =r'_p.\]
Moreover, using \eqref{eq:rho} again, we obtain
\begin{equation*}
0\leq\left<\lambda,\lambda+2\rho\right>\leq\left(\textstyle\sum\limits_{j=1}^m a_j \right)^2+2(m-1)\textstyle\sum\limits_{j=1}^m a_j \leq  p^2+2p(m-1)=p(n+p-2).
\end{equation*}

Let us also denote by $\pi$ the lift to $\Spin(n)$ of the $\SO(n)$-representation $\pi$, and recall that the weights of $\pi_S$ are of the form \(\sum_{i=1}^m\pm\frac12\varepsilon_i\).  Thus, the highest weight \(\lambda_i\in P_{++}(\mathfrak{so}(n,\C))\) of an irreducible component of $\pi_S\otimes \pi$ must be of the form \(\lambda_i=\sum_{j=1}^m b_j \varepsilon_j\), with \(b_j=a_j\pm\frac12\) and \(a_j\in\Z\) such that \(\sum_{j=1}^m|a_j|\leq p\). It follows that \(|b_m|=|a_m\pm\frac12|\geq\frac12\); in particular, $\lambda_i\neq 0$. Again, in light of \eqref{eq:PWforSOn}, the sign of \(b_m\) is irrelevant to compute $PW(\pi_{\lambda_i})$ so we may assume \(b_m=|b_m|\geq 0\). Altogether, the vector $(b_1,\dots, b_m)$ of coefficients belongs to the (translated) simplex
\[\triangle_p+\big(\tfrac12,\dots,\tfrac12\big)=\left\{(b_1,\dots,b_m)\in\R^m : b_1\geq \dots \geq b_{m}\geq \tfrac12\;\text{ and }\; \textstyle\sum\limits_{j=1}^m b_j\leq p+\frac{m}{2} \right\}\]
whose vertices are $v_q+\big(\tfrac12,\dots,\tfrac12\big)$, $0\leq q\leq m$. Similarly to \eqref{eq:concavefct}, we have that
\begin{multline*}
(\tfrac{m}{4}+p^2+p)\left<\lambda_i,2\rho\right>-2(m-1)(\tfrac{m}{4}+p)\|\lambda_i\|^2=\\
2\textstyle\sum\limits_{j=1}^m (\tfrac{m}{4}+p^2+p)(m-j)\,b_j -(m-1)(\frac{m}{4}+p)\,b_j^2
\end{multline*}
is nonnegative for all $(b_1,\dots,b_m)\in\triangle_p+\big(\tfrac12,\dots,\tfrac12\big)$, since this is a concave function of $(b_1,\dots,b_m)$ and its minimum on $\triangle_p+\big(\tfrac12,\dots,\tfrac12\big)$ is equal to zero and achieved at the vertex $v_1+\big(\tfrac12,\dots,\tfrac12\big)$. Therefore, as desired,
\begin{equation*}
	PW(\pi_{\lambda_i}) = \tfrac{\left<\lambda_i,\lambda_i+2\rho\right>}{\|\lambda_i\|^2}\geq 1+\textstyle\frac{2(m-1)(\frac{m}{4}+p)}{\tfrac{m}{4}+p^2+p} =\frac{n^2+(8p-1)n+8p(p-1)}{n+8p(p+1)}= r_p.
\end{equation*}
Finally, we may bound
\[\left<\lambda_i,\lambda_i+2\rho\right>=\textstyle\sum\limits_{j=1}^m \left(a_j^2\pm a_j+\tfrac14\right)+ 2 \sum\limits_{j=1}^m \left(a_j\pm\frac{1}{2}\right)(m-j)\]	
from above with
\[\left<\lambda_i,\lambda_i+2\rho\right>\leq \tfrac{m}{4}+p^2+p+2p(m-1)+\tfrac{(m-1)m}{2}=	p(n+p-1)+\tfrac{n(n-1)}{8},\]
and from below with
\[\left<\lambda_i,\lambda_i+2\rho\right>\geq\textstyle\sum\limits_{j=1}^m \tfrac14+\sum\limits_{j=1}^m(m-j)=\frac{m(2m-1)}{4}=\tfrac{n(n-1)}{8},\]
where we use that $a_j^2\pm a_j\geq0$ because $a_j\in\mathds Z$, and $b_j=a_j\pm\tfrac12\geq\tfrac12$ because $(b_1,\dots,b_m)\in\triangle_p+\big(\tfrac12,\dots,\tfrac12\big)$.
\end{proof}

Using Lemma~\ref{lem:PW-ineq}, we shall now apply Theorem~\ref{GeneralVanishing} to prove the following result, which implies Theorem~\ref{mainthm:A} in the remaining case \(p\geq 2\).   Note that a parallel subbundle \(E_\C\subseteq TM_\C^{\otimes p}\) decomposes into the direct sum of parallel bundles, each contained in \(E_\pi\) for some irreducible subrepresentation \(\pi\) of \(\pi_{\omega_1}^{\otimes p}\).  

\begin{theorem}\label{thm: ptensor-vanishing}
	Let \((M^{n},\g)\) be a closed Riemannian spin manifold of dimension \linebreak $n=4k$, \(k\geq2\), with curvature operator \(R\). Let \(\pi\) be an irreducible subrepresentation of the \(\SO(n)\)-representation \(\pi_{\omega_1}^{\otimes p}\), \(p\geq 2\), with highest weight $\lambda\in P_{++}(\SO(n,\C))$,~and 
	\[C_p(R)=\textstyle \min\!\Big\{\!\left(\frac{n}{8}+p^2+p\right)\Sigma({r_p},R),\,\frac{n(n-1)}{8r_p}\,\Sigma({r_p},R) \!\Big\}+\frac{\scal}{8}+p^2\,\Sigma({r'_p},-R),\]
	with $r_p$ and $r'_p$ as in \eqref{eq:apbp}. If $C_p(R)\geq 0$ and \(E\subseteq E_\pi\) is a parallel complex subbundle, then \(\hat{A}(M,E)=0\), or $C_p(R)\equiv0$ and $S\otimes E$ has a nontrivial parallel~section.
\end{theorem}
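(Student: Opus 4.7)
The plan is to apply \Cref{GeneralVanishing} with $\G=\Spin(n)$ and the lift of $\pi$ to $\Spin(n)$, which reduces the task to verifying the pointwise inequality $c(R)\geq C_p(R)$, where $c(R)$ is the curvature quantity in \eqref{eq: dirac-vanishing-condition} associated to~$\pi$. First I would decompose $\pi_S\otimes\pi=\bigoplus_i\pi_{\lambda_i}$ into irreducible $\Spin(n)$-components and collect from \Cref{lem:PW-ineq} the joint numerical bounds
\[
0\leq \langle\lambda,\lambda+2\rho\rangle\leq p^{2}r'_{p}, \quad PW(\pi)\geq r'_{p},
\]
\[
\tfrac{n(n-1)}{8}\leq \langle\lambda_{i},\lambda_{i}+2\rho\rangle \leq \bigl(\tfrac{n}{8}+p^{2}+p\bigr)r_{p}, \quad PW(\pi_{\lambda_{i}})\geq r_{p}.
\]

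Next, using the identity $\|\lambda\|^{2}PW(\pi)=\langle\lambda,\lambda+2\rho\rangle$ (\Cref{prop:basicsKRpi}(v)) and the nondecreasing monotonicity of $r\mapsto \Sigma(r,R)/r$, I would rewrite each summand of $c(R)$ as a Casimir value times a mean, as in
\[
\|\lambda_{i}\|^{2}\,\Sigma\bigl(PW(\pi_{\lambda_{i}}),R\bigr) = \langle\lambda_{i},\lambda_{i}+2\rho\rangle\cdot\tfrac{\Sigma(PW(\pi_{\lambda_{i}}),R)}{PW(\pi_{\lambda_{i}})} \geq \langle\lambda_{i},\lambda_{i}+2\rho\rangle\cdot\tfrac{\Sigma(r_{p},R)}{r_{p}},
\]
and then minimise over $i$. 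A short case analysis on the sign of $\Sigma(r_{p},R)$ — invoking the Casimir upper bound when $\Sigma(r_{p},R)<0$ and the lower bound otherwise — produces the unified estimate
\[
\min_{i}\|\lambda_{i}\|^{2}\,\Sigma\bigl(PW(\pi_{\lambda_{i}}),R\bigr) \geq \min\!\Bigl\{\bigl(\tfrac{n}{8}+p^{2}+p\bigr)\Sigma(r_{p},R),\;\tfrac{n(n-1)}{8r_{p}}\Sigma(r_{p},R)\Bigr\}.
\]
Running the analogous computation with $R$ replaced by $-R$ on the representation $\pi$ itself — using \Cref{lowerbound} for $-R$, together with $\|\lambda\|^{2}PW(\pi)\leq p^{2}r'_{p}$ and monotonicity of $r\mapsto\Sigma(r,-R)/r$ — yields $\|\lambda\|^{2}\,\Sigma(PW(\pi),-R)\geq p^{2}\,\Sigma(r'_{p},-R)$. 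Adding the $\scal/8$ term and combining the two bounds with the Weitzenböck identity from \Cref{HitchinDirac} gives $c(R)\geq C_{p}(R)$, whence \Cref{GeneralVanishing} delivers the dichotomy in the statement.

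The main obstacle I expect is the sign subtlety in the second estimate: the monotonicity argument cleanly produces $\|\lambda\|^{2}\,\Sigma(PW(\pi),-R)\geq p^{2}\Sigma(r'_{p},-R)$ when $\Sigma(r'_{p},-R)\leq 0$, but in the regime $\Sigma(r'_{p},-R)>0$ it only yields the weaker $\geq 0$. I expect this regime to be essentially incompatible with the hypothesis $C_{p}(R)\geq 0$, since $\Sigma(r'_{p},-R)>0$ forces the top $r'_{p}$ eigenvalues of $R$ to sum to a negative quantity, which in turn makes both $\scal/8$ and the first term $\min\{\cdots\}\Sigma(r_{p},R)$ sufficiently negative to overpower the positive contribution $p^{2}\Sigma(r'_{p},-R)$. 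Formalising this incompatibility, or alternatively exploiting the slack in the first estimate (coming from the fact that in general $\max_{i}\langle\lambda_{i},\lambda_{i}+2\rho\rangle$ is strictly smaller than the worst-case bound in \Cref{lem:PW-ineq}, since the spinor-shift weight $\lambda_{i}-\lambda$ and $\lambda$ together satisfy a tighter Casimir identity), is what I anticipate will be the delicate step closing the argument.
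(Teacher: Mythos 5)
Your plan follows the paper's proof essentially step for step: the same reduction to \Cref{GeneralVanishing}, the same use of \Cref{lem:PW-ineq} together with monotonicity of $r\mapsto\Sigma(r,R)/r$, and the same case split on the sign of $\Sigma(r_p,R)$ yielding the two-term minimum. The one step you leave open — that the regime $\Sigma(r'_p,-R)>0$ is incompatible with $C_p(R)\geq 0$ — is precisely how the paper closes the argument, and it does go through along the lines you sketch. Concretely: if $\Sigma(r'_p,-R)>0$, then monotonicity of $r\mapsto\Sigma(r,-R)/r$ gives $-\scal=2\,\Sigma\big(\binom{n}{2},-R\big)>0$, hence also $\Sigma(r_p,R)<0$; bounding the minimum in $C_p(R)$ by its first entry and dropping $\tfrac{\scal}{8}<0$,
\begin{multline*}
C_p(R) < \left(\tfrac{n}{8}+p^2+p\right)\Sigma(r_p,R)+p^2\,\Sigma(r'_p,-R)\\
=\left(p(n+p-1)+\tfrac{n(n-1)}{8}\right)\tfrac{\Sigma(r_p,R)}{r_p}+p(n+p-2)\,\tfrac{\Sigma(r'_p,-R)}{r'_p}.
\end{multline*}
Since the (continuous) average of the smallest $r_p$ eigenvalues is at most the average of the largest $r'_p$ eigenvalues, we have $\tfrac{\Sigma(r_p,R)}{r_p}\leq-\tfrac{\Sigma(r'_p,-R)}{r'_p}\leq 0$, and since $p(n+p-1)+\tfrac{n(n-1)}{8}>p(n+p-2)$, the right-hand side above is negative, contradicting $C_p(R)\geq0$. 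So no extra slack from sharper Casimir bounds is needed: with $\Sigma(r'_p,-R)\leq0$ established, your estimate $\|\lambda\|^2\,\Sigma(PW(\pi),-R)\geq p^2\,\Sigma(r'_p,-R)$ holds and the argument is complete, exactly as in the paper.
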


\begin{proof}
	Let \(\pi_{S} \otimes\pi =\bigoplus_i\pi_{\lambda_i}\) be the decomposition into irreducible components. Recalling that \(r\mapsto \Sigma(r,R)/r\) is nondecreasing in \(r\), since $PW(\pi_{\lambda_i})\geq r_p$ by Lemma~\ref{lem:PW-ineq}, 
\[\|\lambda_i\|^2\,\Sigma(PW(\pi_{\lambda_i}),R)=\left<\lambda_i,\lambda_i+2\rho\right>\frac{\Sigma({PW(\pi_{\lambda_i})},R)}{PW(\pi_{\lambda_i})} \geq\left<\lambda_i,\lambda_i+2\rho\right> \frac{\Sigma(r_p,R)}{r_p}.
\]
Using Lemma~\ref{lem:PW-ineq} once more, if \(\Sigma(r_p,R)\leq 0\), then
	\begin{multline}\label{eq:ineq1}
	\|\lambda_i\|^2\,\Sigma(PW(\pi_{\lambda_i}),R)\geq
\left(p(n+p-1)+\tfrac{n(n-1)}{8}\right)
	\frac{\Sigma(r_p,R)}{r_p}\\ =\left(\tfrac{n}{8}+p^2+p \right)\Sigma(r_p,R)
	\end{multline}
	otherwise, if \(\Sigma(r_p,R)\geq 0\), then
		\begin{equation}\label{eq:ineq2}
		\|\lambda_i\|^2\,\Sigma(PW(\pi_{\lambda_i}),R)
	\geq\frac{n(n-1)}{8}\frac{\Sigma(r_p,R)}{r_p}.
	\end{equation}
	
We claim that \(\Sigma(r'_p,-R) \leq 0\). Indeed, suppose by contradiction \(\Sigma(r'_p,-R) > 0.\) Then \(2\,\Sigma\big({\binom{n}{2}},-R\big)=-\scal > 0\) and \(\Sigma(r_p,R) < 0\), so
\begin{align*}
C_p(R)& < \left(\tfrac{n}{8}+p^2+p \right)\Sigma(r_p,R)+p^2\,\Sigma(r'_p,-R)\\
&=\left(p(n+p-1)+\tfrac{n(n-1)}{8}\right)\frac{\Sigma(r_p,R)}{r_p}+p(n+p-2)\frac{\Sigma(r'_p,-R)}{r'_p}.
\end{align*}
Since 
\(\frac{\Sigma(r_p,R)}{r_p}\leq-\frac{\Sigma(r'_p,-R)}{r'_p}\leq 0\) and \( p(n+p-1)+\tfrac{n(n-1)}{8}  > p(n+p-2)\), we conclude that \(C_p(R)< 0,\) which contradicts our hypotheses, proving that \(\Sigma(r'_p,-R)\leq 0\).

From Lemma~\ref{lem:PW-ineq}, we have $PW(\pi)\geq r'_p$ and as $r\mapsto\Sigma(r,R)/r$ is nondecreasing,
\begin{multline}\label{eq:ineq3}
\|\lambda\|^2\, \Sigma({PW(\pi)},-R) =\left<\lambda,\lambda+2\rho\right>\tfrac{\Sigma({PW(\pi)},-R)}{PW(\pi)}\\
\geq \left<\lambda,\lambda+2\rho\right>\tfrac{\Sigma(r'_p,-R)}{r'_p} \geq p(n+p-2) \tfrac{\Sigma(r'_p,-R)}{r'_p}= p^2\,\Sigma(r'_p,-R),
\end{multline}
where the last inequality uses Lemma~\ref{lem:PW-ineq} once again. 

Therefore, combining \eqref{eq:ineq1}, \eqref{eq:ineq2}, and \eqref{eq:ineq3}, we conclude that, for all $i$,
	\[\|\lambda_{i}\|^2\, \Sigma({PW(\pi_{\lambda_i})},R)+\tfrac{\scal}{8}+\|\lambda\|^2\,\Sigma({PW(\pi)},-R)\geq C_p(R),\]
	and hence the conclusion follows from Theorem~\ref{GeneralVanishing}. 
\end{proof}

\subsection{\texorpdfstring{Monotonicity of $C_p(R)$}{Monotonicity}}
Using similar arguments, we now show that the curvature conditions \(C_p(R)>0\) are nested, and each implies \(\scal>0\); that is:

\begin{proposition}\label{prop:nested}
	Let \(n\geq 3\) and \(1\leq q <p\). If $R\in\Sym^2_b(\wedge^2\R^n)$ has \(C_p(R)\geq0\), then \(\frac{\scal}{4}\geq C_q(R)\geq C_p(R).\) 
\end{proposition}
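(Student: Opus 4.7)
The plan is to prove the two inequalities $\scal/4 \geq C_q(R)$ and $C_q(R)\geq C_p(R)$ separately, sharing a common set of preparatory facts: (a) both $r_p$ and $r'_p$ are strictly decreasing in $p$, the former by the rewrite $r_p = 1 + (n-2)(n+8p)/(n+8p+8p^2)$; (b) the arithmetic mean $r\mapsto \Sigma(r,R)/r$ is nondecreasing; (c) the identity $(n/8+p^2+p)\,r_p = p(n+p-1) + n(n-1)/8$ (obtained by rearranging the definition of $r_p$) implies $A_p := n/8+p^2+p\geq B_p := n(n-1)/(8r_p)$ for $p\geq 1$; and (d) the hypothesis $C_p(R)\geq 0$ already yields $\Sigma(r'_p,-R)\leq 0$, by the argument already carried out in the proof of \Cref{thm: ptensor-vanishing}.

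For the monotonicity $C_q(R)\geq C_p(R)$, I first handle the generic range $1<q<p$. Write $C_p(R) = f_p + \scal/8 + g_p$ with $f_p$ the min term and $g_p = p^2\,\Sigma(r'_p,-R)$. By (c), $f_p = (n(n-1)/8)\,S_p$ when $S_p := \Sigma(r_p,R)/r_p\geq 0$, and $f_p = (p(n+p-1)+n(n-1)/8)\,S_p$ when $S_p<0$. A case analysis on the signs of $S_p$ and $S_q$ (using $S_q\geq S_p$ from (a), (b), and that $p\mapsto p(n+p-1)$ is increasing) gives $f_q\geq f_p$ in each case. For $g_q - g_p = q(n+q-2)S'_q - p(n+p-2)S'_p$, with $S'_p := \Sigma(r'_p,-R)/r'_p$: the case $S'_q\geq 0$ is immediate from (d); when $S'_q<0$, both $S'_p,S'_q$ are negative with $|S'_p|\geq|S'_q|$, and $p(n+p-2)>q(n+q-2)$ forces $p(n+p-2)S'_p\leq q(n+q-2)S'_q$. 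To handle $q=1$ and $p>1$, introduce the auxiliary $\tilde C_1(R) = \min\{(n/8+2)\Sigma(r_1,R),(n(n-1)/(8r_1))\Sigma(r_1,R)\}+\scal/8+\Sigma(r'_1,-R)$, i.e., the $p=1$ specialization of the generic formula. The preceding argument applied with $q=1$ gives $\tilde C_1(R)\geq C_p(R)$, and the remaining inequality $C_1(R)\geq\tilde C_1(R)$ reduces to the two pointwise bounds $(n(n-1)/(8r_1))\Sigma(r_1,R)\leq\scal/8$ (from (b)) and $-\mu\geq\Sigma(r'_1,-R)=\Sigma(n-1,-R)$. The latter follows because $\Ric(v,v)$ equals the trace of $R$ on the $(n-1)$-plane spanned by $\{v\wedge e_j\}$ for an orthonormal frame $\{e_j\}$ of $v^\perp$, which by min-max is bounded above by the sum of the top $n-1$ eigenvalues of $R$.

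The upper bound $\scal/4\geq C_q(R)$ then follows from $C_q(R)\geq C_p(R)\geq 0$. For $q\geq 2$: by (b), $B_q\Sigma(r_q,R)\leq\scal/8$ pointwise, and since $A_q\geq B_q$, also $A_q\Sigma(r_q,R)\leq B_q\Sigma(r_q,R)\leq\scal/8$ when $S_q<0$; hence $f_q\leq\scal/8$ in all cases, and combined with $g_q\leq 0$ from (d) applied at $q$ this yields $C_q(R)\leq\scal/4$. For $q=1$: the two arms of the min in $C_1$ together with (b) and $C_1(R)\geq 0$ yield the elementary inequalities $\mu\leq\scal/4$ and $\mu\leq\scal(n+3)/(4(n-1))$, which combined with $\mu\geq\scal/n$ force $\scal\geq 0$ (hence also $\mu\geq\scal/n\geq 0$); then $\scal/4 - C_1(R) = (\scal/8-\min)+\mu\geq\mu\geq 0$. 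The main obstacle is the case analysis in the monotonicity step, which requires careful sign tracking; the crucial observation that collapses every branch to a manifestly nonnegative sum is that $\Sigma(r'_p,-R)\leq 0$ and the monotonicity $r'_p\leq r'_q$ together force $p(n+p-2)\,\Sigma(r'_p,-R)/r'_p \leq q(n+q-2)\,\Sigma(r'_q,-R)/r'_q$ even when $\Sigma(r'_q,-R)>0$.
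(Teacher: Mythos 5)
Your proposal is correct and follows essentially the same route as the paper's proof: the same key ingredients (monotonicity of \(r_p,r'_p\), the nondecreasing mean \(r\mapsto\Sigma(r,R)/r\), the identity \((\tfrac n8+p^2+p)r_p=p(n+p-1)+\tfrac{n(n-1)}8\), the implication \(C_p(R)\geq0\Rightarrow\Sigma(r'_p,-R)\leq0\) borrowed from the proof of \Cref{thm: ptensor-vanishing}, and the reduction \(C_1(R)\geq\tilde C_1(R)\) via \(\tfrac{n(n-1)}{8r_1}\Sigma(r_1,R)\leq\tfrac{\scal}8\) and \(\mu\leq-\Sigma(n-1,-R)\)), merely reorganized as a sign case analysis instead of the paper's inequality chains. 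The only cosmetic deviation is the endgame for \(\tfrac{\scal}4\geq C_q(R)\), where you deduce \(\scal\geq0\) (hence \(\mu\geq0\)) from the two elementary bounds \(\mu\leq\tfrac{\scal}4\), \(\mu\leq\tfrac{(n+3)\scal}{4(n-1)}\) together with \(\mu\geq\tfrac{\scal}n\), while the paper rules out \(\mu<0\) by a direct contradiction; both are valid and of the same nature.
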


\begin{proof}
	First, we note that \[\frac{\scal}{8}=\frac{n(n-1)}{8}\frac{\Sigma\left(\binom{n}{2},R\right)}{\binom{n}{2}}\geq\frac{n(n-1)}{8}\frac{\Sigma(r_1,R)}{r_1},\]
	and that \(\mu\), being a trace of \(R\) over a subspace of \(\wedge^2\R^n\) of dimension \(r'_1=n-1\), satisfies \(\mu\leq-\Sigma(r'_1,-R)\). Thus, it follows that 
	\[C_1(R)\geq\textstyle \min\Big\{\!\left(\frac{n}{8}+2\right) \Sigma({r_1},R),\, \frac{n(n-1)}{8r_1}\,\Sigma({r_1},R) \Big\}+\frac{\scal}{8}+\Sigma({r'_1},-R),\]
	and the right-hand side is the result of setting $p=1$ on the formula for \(C_p(R)\),~\(p\geq 2\).
	
	Since \(n>2\), both \(r_p\) and \(r_p'\) are decreasing functions of \(p\).  As demonstrated in the proof of Theorem~\ref{thm: ptensor-vanishing}, it follows from \(C_p(R)>0\) that \(\Sigma(r'_p,-R)\leq 0.\) Thus,
	\begin{multline*}
    q^2\,\Sigma(r'_q,-R)=q(n+q-2) \frac{\Sigma(r'_q,-R)}{r'_q}\geq q(n+q-2) \frac{\Sigma(r'_p,-R)}{r'_p}\\\geq p(n+p-2) \frac{\Sigma(r'_p,-R)}{r'_p}=p^2\,\Sigma(r'_p,-R).
    \end{multline*}
	If \(\Sigma(r_q,R)\leq 0\), then 
	\begin{multline*}
    \textstyle \min\Big\{\!\left(\frac{n}{8}+q^2+q\right) \Sigma({r_q},R),\, \frac{n(n-1)}{8r_q}\, \Sigma({r_q},R)\Big\}=\left(\frac{n}{8}+q^2+q\right)\!\Sigma({r_q},R)\\
    =\tfrac{1}{8}\left(n^2+(8q-1)n+8q(q-1)\right)\,\Sigma(r_q,R)/r_q\\ 
    \geq\tfrac{1}{8}\left(n^2+(8p-1)n+8p(p-1)\right)\,\Sigma(r_q,R)/r_q\\
    \geq\tfrac{1}{8}\left(n^2+(8p-1)n+8p(p-1)\right)\,\Sigma(r_p,R)/r_p\\
    =\left(\tfrac{n}{8}+p^2+p\right) \Sigma({r_p},R)\geq \textstyle \min\Big\{\!\left(\tfrac{n}{8}+p^2+p\right)\Sigma({r_p},R),\, \frac{n(n-1)}{8r_p}\, \Sigma({r_p},R) \!\Big\},
    \end{multline*}
	while, if \(\Sigma(r_q,R)\geq 0\), then 
	\begin{multline*}
    \textstyle \min\Big\{\!\left(\frac{n}{8}+q^2+q\right)\Sigma({r_q},R),\,\frac{n(n-1)}{8r_q}\,\Sigma({r_q},R) \Big\}=\frac{n(n-1)}{8r_q}\Sigma({r_q},R)\\
	\geq \textstyle\frac{n(n-1)}{8r_p} \Sigma({r_p},R)\geq \textstyle \min \Big\{\!\left(\frac{n}{8}+p^2+p\right) \Sigma({r_p},R),\, \frac{n(n-1)}{8r_p}\, \Sigma({r_p},R) \Big\}.
    \end{multline*}
	Therefore, in all cases, \(C_q(R)\geq C_p(R)\).
	
	Finally, assume \(C_1(R)\geq0.\) If \(\mu< 0,\) then \(\scal<0,\) and since \(r_1<r_0=n-1\) and \(\mu\geq \Sigma(n-1,R)\), we have
	\[C_1(R)\leq\tfrac{1}{8}{n(n+7)}\frac{\Sigma(r_1,R)}{r_1}-\mu\leq\tfrac{1}{8}{n(n+7)}\frac{\Sigma(n-1,R)}{n-1}-(n-1)\frac{\Sigma(n-1,R)}{n-1}\]\[=\left(\tfrac{n^2-n+8}{8n-8}\right)\Sigma(n-1,R)<0.\]
	Since this contradicts our assumption \(C_1(R)\geq0\), we conclude that \(\mu\geq0,\) and 
	\[C_1(R)\leq\textstyle \min\Big\{\!\left(\frac{n}{8}+2\right) \Sigma({r_1},R),\,\frac{\scal}{8}\Big\}+\frac{\scal}{8}\leq\frac{\scal}{4}.\qedhere\]
	\end{proof}

\section{Cobordism classes}\label{cobordism}

If \((M^n,\g)\) is a closed Riemannian manifold of dimension $n=4k$, $k\geq2$, whose curvature operator is $r$-positive with $2k \leq r\leq n-1$, then Theorem~\ref{thm:pw} implies the vanishing of its Betti numbers \(b_1,\dots ,b_{n-r}\), and \(b_r, \dots ,b_{n-1}\). (To simplify notation, throughout this section, all Betti numbers \(b_i=b_i(M)\) and Pontryagin numbers $p_{I}=p_{I}(M)$ are understood to refer to \(M\), and all rational Pontryagin classes \(p_i=p_i(TM)\) to \(TM\).) 
Thus, the rational Pontryagin classes in the corresponding degrees vanish, as do any Pontryagin numbers involving those Pontryagin classes.  If, in addition, the conditions in Theorem~\ref{mainthm:A} are satisfied, then further linear combinations of Pontryagin numbers vanish. In this section, we combine these results to give sufficient conditions for \emph{all} Pontryagin numbers to vanish, that is, for \(M\) to be rationally null-cobordant. We first prove Theorem~\ref{mainthm:D} (ii) and (iii), as follows:

\begin{theorem}\label{general-cobordism}
Let \((M^{n},\g)\) be a closed Riemannian spin manifold of dimension $n=4k$, with \(k\geq 6\) and \(k\neq 7\). If its curvature operator is \(r\)-positive, where
\begin{equation*}
    r= 2k+4 \text{ if }k\text{ is even}, \quad \text{ and }\quad r= 2k+6 \text{ if }k\text{ is odd},
\end{equation*}
and \(\frac{\scal}{8}\operatorname{Id}-\Ric\succeq0\), then \(M\) is rationally null-cobordant. 
\end{theorem}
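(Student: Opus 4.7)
The plan is to use the $r$-positivity hypothesis to force most Betti numbers (and hence most Pontryagin classes) of $M$ to vanish via \Cref{thm:pw}, reducing the rational cobordism question to the vanishing of just two Pontryagin numbers, which are then killed by the two linear relations $\hat A(M) = 0$ (from Lichnerowicz) and $\hat A(M, TM_\C) = 0$ (from \Cref{mainthm:A}).

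First, set $p = n - r$. By \Cref{thm:pw}, $r$-positivity gives $b_q(M) = 0$ for all $1 \le q \le p$ and $n-p \le q \le n-1$. For $k$ even, $p = 2k - 4$, so the middle Betti numbers can be nonzero only in the window $\{2k-3,\dots,2k+3\}$; for $k$ odd, $p = 2k - 6$ and the window is $\{2k-5,\dots,2k+5\}$. The rational Pontryagin class $p_i \in H^{4i}(M;\mathds Q)$ therefore vanishes unless $i \in \{0,k\}$ or $4i$ is a multiple of $4$ in this middle window. A parity count shows that the only surviving middle index is $i = k/2$ when $k$ is even, and $i \in \{(k-1)/2,\ (k+1)/2\}$ when $k$ is odd. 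Partitioning $k$ using only the surviving indices leaves exactly two admissible partitions in each case: $(k)$ and $(k/2, k/2)$ for $k$ even, and $(k)$ and $((k+1)/2,(k-1)/2)$ for $k$ odd. Hence at most two Pontryagin numbers of $M$ can be nonzero.

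Next, I produce two vanishing identities. Since $r$-positivity implies $\scal > 0$ and $M$ is spin, Lichnerowicz gives $\hat A(M) = 0$. To invoke \Cref{mainthm:A}, I verify $C_1(R) > 0$: an elementary computation shows $r \le r_1 = n(n+7)/(n+16)$ for every $k$ covered by the statement, so the monotonicity of $s \mapsto \Sigma(s, R)/s$ upgrades $r$-positivity to $r_1$-positivity. Combined with the hypothesis $\frac{\scal}{8}\operatorname{Id} - \Ric \succeq 0$, i.e.\ $\mu \le \scal/8$, this gives $C_1(R) > 0$ by the observation in the sentence preceding \Cref{A-props}. Then \Cref{mainthm:A} with $p = 1$ and $E = TM$ yields $\hat A(M, TM_\C) = 0$. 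These two identities give two linear constraints on the pair of surviving Pontryagin numbers.

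The main obstacle is to show that the functionals $\hat A$ and $\hat A(\cdot, T\cdot_\C)$ are linearly independent on the two-dimensional space spanned by the two surviving Pontryagin numbers. Using the splitting principle, one writes
\[
\hat A(TM) = \textstyle\prod_{j=1}^{2k}\frac{x_j/2}{\sinh(x_j/2)}, \qquad \operatorname{ch}(TM_\C) = \sum_{j=1}^{2k} 2 \cosh(x_j),
\]
extracts the degree-$4k$ components, and re-expresses them via the elementary symmetric polynomials $\sigma_i = p_i$. Reading off the coefficients of $p_k$ and $p_{k/2}^2$ (respectively $p_k$ and $p_{(k-1)/2}p_{(k+1)/2}$) in both $\hat A_k$ and $(\hat A \cdot \operatorname{ch}(TM_\C))_k$ produces a $2\times 2$ matrix whose determinant must be verified to be nonzero. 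Equivalently, one may evaluate both genera on a convenient pair of test manifolds (such as $\mathds H P^{k/2}$ together with a Milnor hypersurface of matching dimension, for $k$ even) and check that the resulting system is invertible. The exclusion $k \ne 7$ strongly suggests that this $2 \times 2$ determinant degenerates only in dimension $28$; ruling this out uniformly for all remaining $k \ge 6$ is the technical heart of the argument. Once linear independence is established, both surviving Pontryagin numbers vanish, so all Pontryagin numbers of $M$ vanish, and $M$ is rationally null-cobordant by Thom's theorem.
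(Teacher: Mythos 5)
Your overall strategy is exactly the paper's: use \Cref{thm:pw} to confine the possibly nonzero Betti numbers to a window around the middle dimension, so that only two Pontryagin numbers survive ($p_{(k/2,k/2)}$ and $p_{(k)}$ for $k$ even, $p_{((k-1)/2,(k+1)/2)}$ and $p_{(k)}$ for $k$ odd), then impose the two relations $\hat A(M)=0$ (Lichnerowicz) and $\hat A(M,TM_\C)=0$ (from $C_1(R)>0$, which you correctly deduce from $r<r_1$ and $\frac{\scal}{8}\operatorname{Id}-\Ric\succeq0$, and \Cref{mainthm:A}). Up to that point your argument matches the paper.

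The gap is that you never establish the decisive step, namely that these two functionals are linearly independent on the two surviving Pontryagin numbers; you explicitly defer it as ``the technical heart of the argument.'' But that nondegeneracy \emph{is} the proof: without it the theorem is not established, and it is not a formality. The paper carries it out by computing, for $k=2\ell$ even, $\hat A(TM)=1+c_{2\ell}\,p_\ell+\tfrac12(c_{2\ell}^2-c_{4\ell})p_\ell^2+c_{4\ell}\,p_{2\ell}$ and, via Newton's identities, $\operatorname{ch}(TM_\C)=8\ell+\tfrac{(-1)^{\ell+1}}{(2\ell-1)!}p_\ell+\tfrac{2\ell}{(4\ell)!}p_\ell^2-\tfrac{1}{(4\ell-1)!}p_{2\ell}$, so that the $2\times2$ determinant condition becomes $c_{2\ell}\neq0$ and $(-1)^\ell(2\ell-1)!\,c_{2\ell}\neq 2(4\ell-1)!\,c_{4\ell}$, i.e.\ $B_{2\ell}\neq0$ and $B_{2\ell}\neq B_{4\ell}$; the analogous computation for $k$ odd reduces to the nonvanishing of an explicit combination of $B_{2\ell}$, $B_{2\ell+2}$, $B_{4\ell+2}$. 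Moreover, your guess that the determinant ``degenerates only in dimension $28$'' is not accurate: it degenerates at $k=4$ (indeed $B_4=B_8=-\tfrac1{30}$) and at $k=5$ as well as $k=7$, which is precisely why the hypothesis reads $k\geq6$, $k\neq7$, and why those dimensions are handled separately in \Cref{thm:left-overs} using $\hat A(M,\wedge^2TM_\C)$. Your alternative suggestion of testing against manifolds such as $\mathds H P^{k/2}$ or Milnor hypersurfaces also needs care: such test manifolds have many nonzero Pontryagin numbers, so their genera do not directly isolate the two relevant coefficients without essentially redoing the coefficient computation. To complete the proof you must carry out the Bernoulli-number verification (or an equivalent explicit coefficient computation) for all $k\geq6$, $k\neq7$.
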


\begin{proof}
First, suppose \(k=2\ell\) is even. 
By Theorem~\ref{thm:pw}, since $(M^n,\g)$ has $r$-positive curvature operator with $r=4\ell+4$, the only possibly nonvanishing Betti numbers of \(M\) besides \(b_0\) and \(b_n\) are \(b_{4\ell-3}, \dots, b_{4\ell+3}\).  Thus, all rational Pontryagin classes vanish, except possibly \(p_\ell\) and \(p_{2\ell}\), i.e., the only possibly nonvanishing Pontryagin numbers are \(p_{(\ell,\ell)}\) and \(p_{(2\ell)}\). We now prove that these also must vanish.

In the above setup, a direct computation (e.g., using tools in \cite[\S 1.8]{HBJ}) gives
\[\hat{A}(TM)=1+c_{2\ell}\, p_\ell+\tfrac12\left(c_{2\ell}^2-c_{4\ell}\right)p_\ell^2+c_{4\ell}\, p_{2\ell},\]
where \(c_i\) is the coefficient of \(x^i\) in the power series expansion of \(\frac{x}{4\tan(x/2)}\) at $x=0$. 
Since $M$ is spin and has \(\scal>0\), by Lichnerowicz, we have
\begin{equation}\label{eq:lineqn1}
	\hat{A}(M)=\tfrac12\left(c_{2\ell}^2-c_{4\ell}\right)p_{(\ell,\ell)}+c_{4\ell}\,p_{(2\ell)}=0.
\end{equation}
Using Newton's identities for power sums and elementary symmetric polynomials,
\[\operatorname{ch}(TM_\C)=8\ell+\tfrac{(-1)^{\ell+1}}{(2\ell-1)!}\,p_\ell+\tfrac{2\ell}{(4\ell)!}\,p_\ell^2-\tfrac1{(4\ell-1)!}\,p_{2\ell}.\]
The curvature operator $R$ of $(M^n,\g)$ is $r$-positive with \(r<r_1\) and \(\frac\scal 8\operatorname{Id}-\Ric\succeq0\), therefore \(C_1(R)>0\). Thus, we may apply Theorem~\ref{mainthm:A} with $p=1$ and obtain
\begin{equation}\label{eq:lineqn2}
	\hat{A}(M,TM_\C)= \left(\tfrac{(-1)^{\ell+1}}{(2\ell-1)!}c_{2\ell}+\tfrac{2\ell}{(4\ell)!}\right)p_{(\ell,\ell)} -\tfrac1{(4\ell-1)!}\,p_{(2\ell)}=0.
\end{equation}
In order to show that \(p_{(\ell,\ell)}\) and \(p_{(2\ell)}\) vanish, it suffices to show that the homogeneous linear system given by \eqref{eq:lineqn1} and \eqref{eq:lineqn2} on those variables only has the trivial solution. This is easily seen to be equivalent to
\[c_{2\ell}\neq0\quad\text{ and }\quad (-1)^{\ell}(2\ell-1)!\,{c_{2\ell}}\neq2(4\ell-1)!\,c_{4\ell}.\] 
Using that \(2(-1)^\ell(2\ell)!\,c_{2\ell}=B_{2\ell}\) is the \(2\ell^\text{th}\) Bernoulli number, the above conditions are satisfied if and only if \(B_{2\ell}\neq0\), which always holds, and \(B_{2\ell}\neq B_{4\ell}\), which holds if \(\ell\neq 2\), and we assumed $k=2\ell\geq6$. Thus, \(p_{(\ell,\ell)}=p_{(2\ell)}=0\), as desired.

Next, assume \(k=2\ell+1\) is odd. By Theorem~\ref{thm:pw},
since the curvature operator $R$ of $(M^n,\g)$ is $r$-positive with $r=4\ell+8$, aside from \(b_0\) and \(b_{n}\), all Betti numbers vanish except possibly \(b_{4\ell-3}, \dots, b_{4\ell+7}\). Thus, the only possibly nonzero rational Pontryagin classes are \(p_{\ell}\), \(p_{\ell+1}\), and \(p_{2\ell+1}\), i.e., the only possibly nonzero Pontryagin numbers are $p_{(\ell,\ell+1)}$ and $p_{(2\ell+1)}$.
Similarly to the above, in this situation, as \(\ell>1\), 
\begin{align*}
\hat{A}(TM)&=1+c_{2\ell}\,p_\ell+c_{2\ell+2}\,p_{\ell+1}+\left(c_{2\ell}c_{2\ell+2}-c_{4\ell+2}\right)\,p_{\ell}\,p_{\ell+1}+c_{4\ell+2}\,p_{2\ell+1}, \\
\operatorname{ch}(TM_\C)&=8\ell+4+\tfrac{(-1)^{\ell+1}}{(2\ell-1)!}\,p_\ell+\tfrac{(-1)^\ell}{(2\ell+1)!}\,p_{\ell+1}-\tfrac1{(4\ell+1)!}\,p_\ell\,p_{\ell+1}+\tfrac1{(4\ell+1)!}\,p_{2\ell+1}.
\end{align*}
Once again, $\hat A(M)=0$ because $M$ is spin and has \(\scal>0\), 
and \(\hat{A}(M,TM_\C)=0\) by Theorem~\ref{mainthm:A}, because \(C_1(R)>0\), as $R$ is $r$-positive with $r<r_1$ and \(\frac\scal 8\operatorname{Id}-\Ric\succeq0\).
The homogeneous linear system given by \(\hat{A}(M)=\hat{A}(M,TM_\C)=0\) 
on the variables \(p_{(\ell,\ell+1)}\) and \(p_{(2\ell+1)}\) only has the trivial solution provided that
\[-(4\ell+2)\,B_{2\ell}\,B_{2\ell+2}+(2\ell+2)\,B_{2\ell}\,B_{4\ell+2}+(2\ell)\,B_{2\ell+2}\,B_{4\ell+2}\neq0,\]
which holds if \(\ell\neq 2,3\), and we assumed \(\ell\geq4\). 
Thus \(p_{(\ell,\ell+1)}=p_{(2\ell+1)}=0\). 
\end{proof}

Let us now address the remaining statement (i) in Theorem~\ref{mainthm:D}, regarding the case $k=2$. In dimension $n=8$, since $\frac{\scal}{8}\operatorname{Id}-\Ric$ is traceless, the condition \(\frac{\scal}{8}\operatorname{Id}-\Ric\succeq0\) is equivalent to the Einstein condition $\Ric=\frac{\scal}{8}\operatorname{Id}$.
Repeating the  proof of Theorem~\ref{general-cobordism} with \(k=2\) and \(r=5\), it follows that if \((M^8,\g)\) is a closed Riemannian spin manifold with an Einstein metric and \(5\)-positive curvature operator, then all its Pontryagin numbers vanish. In this dimension, the cobordism group \(\Omega_{8}^\Spin\cong\Z\oplus\Z\) has no torsion and is hence completely determined by Pontryagin numbers, so we conclude that such \(M^8\) is null-cobordant, as claimed.

\smallskip
Next, we consider the relevant dimensions not covered by Theorem~\ref{general-cobordism}.
In these dimensions, under the analogous hypotheses, the homogeneous linear system given by \(\hat{A}(M)=\hat{A}(M,TM_\C)=0\) on the only $2$ possibly nonvanishing Pontryagin numbers \emph{degenerates}, i.e., admits nontrivial solutions. 
Thus, we must ensure the vanishing of some other linear combination of these Pontryagin numbers, and \(\hat{A}(M,\wedge^2TM_\C)\) turns out to be a judicious choice. In order to obtain its vanishing, we make an assumption on \(K(R,\wedge^2TM)=K(R,\wedge^2\pi_{\omega_1})\), which is given explicitly in Example~\ref{ex:Labbi-formulas}; recall also that \(-K(R,\wedge^2TM)=K(-R,\wedge^2TM)\) can be bounded  from below in terms of the $n-2$ largest eigenvalues of \(R\) by Proposition~\ref{lowerbound} and~\eqref{eq:pw-wedgep}.

\begin{theorem}\label{thm:left-overs}
Let \((M^{n},\g)\) be a closed Riemannian spin manifold 
of dimension $n=4k$, \(k=4\), \(5\), or $7$. If its curvature operator is $r$-positive, where \(r=\frac{4 k^2+ 15k-4}{k+8}\), and \(\frac{\scal}{8}\operatorname{Id}-K(R,\wedge^2TM)\succeq0\), then \(M\) is rationally null-cobordant.
\end{theorem}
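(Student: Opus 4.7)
Plan: adapt the strategy of Theorem \ref{general-cobordism}. Use Theorem \ref{thm:pw} to reduce all but two Pontryagin numbers to zero, and impose two linearly independent relations among them via vanishing of twisted $\hat A$ genera. In the present dimensions the system $\hat A(M)=0,\,\hat A(M,TM_\C)=0$ degenerates (this is exactly why $k\in\{4,5,7\}$ is excluded from Theorem \ref{general-cobordism}), so the second equation will be replaced by $\hat A(M,\wedge^2 TM_\C)=0$.

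\emph{Topological reduction.} Since $\Sigma(\cdot,R)/\cdot$ is nondecreasing, $r$-positivity forces $\scal>0$, so Lichnerowicz gives $\hat A(M)=0$. Also, $r$-positivity implies $(n-q)$-positivity for the integer $q=\lfloor n-r\rfloor$, which equals $6,6,8$ for $k=4,5,7$, respectively. Theorem \ref{thm:pw} then kills all Betti numbers except those of degrees $0$, $n$, and a narrow middle band; a direct inspection of degrees shows that the only potentially nonzero Pontryagin numbers of $M$ are $p_{(2,2)}$ and $p_{(4)}$ for $k=4$, $p_{(3,2)}$ and $p_{(5)}$ for $k=5$, and $p_{(4,3)}$ and $p_{(7)}$ for $k=7$.

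\emph{Vanishing of $\hat A(M,\wedge^2 TM_\C)$.} I apply the Bochner technique to the Dirac operator twisted by $\wedge^2 TM_\C$. By Lemma \ref{HitchinDirac} with $\pi=\wedge^2\pi_{\omega_1}$,
\[
\mathcal R_{\wedge^2\pi_{\omega_1}}=K(R,\pi_S\otimes\wedge^2\pi_{\omega_1})+\tfrac{\scal}{8}\operatorname{Id}-K(R,\wedge^2\pi_{\omega_1}).
\]
Proposition \ref{prop:PWsympwedgep} with $p=2$ gives $PW(\pi_S\otimes\wedge^2\pi_{\omega_1})=\frac{n^2+15n-16}{n+32}=r$ (the matching of the two expressions is immediate after substituting $n=4k$), and the decomposition used in that proof, via \eqref{eq:decomp-omega_m,p}, exhibits no trivial summand in $\pi_S\otimes\wedge^2\pi_{\omega_1}$. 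Hence Corollary \ref{pointwise}(ii) yields $K(R,\pi_S\otimes\wedge^2\pi_{\omega_1})\succ 0$ pointwise, and the hypothesis $\frac{\scal}{8}\operatorname{Id}-K(R,\wedge^2 TM)\succeq 0$ upgrades this to $\mathcal R_{\wedge^2\pi_{\omega_1}}\succ 0$. Integrating the Weitzenb\"ock formula $D_{\wedge^2 TM_\C}^2=\nabla^*\nabla+\mathcal R_{\wedge^2\pi_{\omega_1}}$ against any harmonic section, as in the proof of Theorem \ref{GeneralVanishing}, forces $\ker D_{\wedge^2 TM_\C}=0$, so $\hat A(M,\wedge^2 TM_\C)=0$ by Atiyah--Singer.

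\emph{Linear independence.} It remains to verify, for each $k\in\{4,5,7\}$ separately, that the homogeneous $2\times 2$ linear system in the two surviving Pontryagin numbers given by $\hat A(M)=0$ and $\hat A(M,\wedge^2 TM_\C)=0$ admits only the trivial solution. The coefficients of $\hat A(M)$ are the Bernoulli-number expressions already appearing in the proof of Theorem \ref{general-cobordism}. For $\hat A(M,\wedge^2 TM_\C)$, I would expand $\operatorname{ch}(\wedge^2 TM_\C)$ via \eqref{eq:ch-weights} (equivalently, using $\operatorname{ch}(\wedge^2 V)=\tfrac12(\operatorname{ch}(V)^2-\operatorname{ch}(\psi^2 V))$), multiply with $\hat A(TM)$, and extract the degree-$n$ coefficients of the relevant Pontryagin monomials. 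The main obstacle is precisely this case-by-case computational verification: one must confirm that the three resulting $2\times 2$ determinants, each a specific rational combination of Bernoulli numbers, are all nonzero, i.e., that replacing $\hat A(M,TM_\C)$ by $\hat A(M,\wedge^2 TM_\C)$ breaks the Bernoulli-number identity responsible for the degeneration of $\hat A(M)$--$\hat A(M,TM_\C)$ in each of dimensions $16$, $20$, and $28$.
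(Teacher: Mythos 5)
Your overall strategy is exactly the paper's: use \Cref{thm:pw} to reduce to the two surviving Pontryagin numbers in each of the dimensions $16$, $20$, $28$, get $\hat A(M)=0$ from Lichnerowicz, get $\hat A(M,\wedge^2TM_\C)=0$ by combining \Cref{HitchinDirac}, \Cref{prop:PWsympwedgep} (with $p=2$, whose value indeed equals the $r$ in the statement), and \Cref{pointwise}, and then conclude from a nondegenerate homogeneous $2\times2$ system. The topological reduction and the Bochner/index argument are correct, and your explicit check that $\pi_S\otimes\wedge^2\pi_{\omega_1}$ contains no trivial summand (needed for strict positivity in \Cref{pointwise}(ii)) is a detail the paper leaves implicit.

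The gap is the final step, which you acknowledge but do not carry out: you never verify that the three $2\times2$ systems given by $\hat A(M)=0$ and $\hat A(M,\wedge^2TM_\C)=0$ have nonzero determinant. This cannot be deferred as a formality, because it carries all of the content of the theorem in these dimensions: the entire reason $k=4,5,7$ are excluded from \Cref{general-cobordism}, and the reason one must pass to the twist $\wedge^2TM_\C$, is that the analogous determinant formed from $\hat A(M)$ and $\hat A(M,TM_\C)$ \emph{does} vanish precisely here, so nondegeneracy of such a system is not something one may assert without computation. The paper finishes by computing the coefficients explicitly; for instance, for $k=4$ it obtains $\hat A(M)=\tfrac{1}{2^{11}\cdot3^4\cdot5^2\cdot7}\bigl(13\,p_{(2,2)}-2^2\cdot3\,p_{(4)}\bigr)$ and $\hat A(M,\wedge^2TM_\C)=\tfrac{1}{2^8\cdot3\cdot5\cdot7}\bigl(101\,p_{(2,2)}+2^2\cdot149\,p_{(4)}\bigr)$, whose determinant is visibly nonzero, and it records the analogous pairs of linear relations for $k=5$ and $k=7$. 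Until you perform these three degree-$n$ Chern character and $\hat A$-polynomial expansions (via \eqref{eq:ch-weights} or the splitting principle, as you indicate) and confirm the three determinants are nonzero, the proposal does not yet prove the statement.
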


\begin{proof}
If \(k=4\), then \(r=10\), so \(p_1=0\) and \(p_3=0\) by Theorem~\ref{thm:pw}. Computing as in the proof of Theorem~\ref{general-cobordism} in terms of the only remaining Pontryagin numbers,
\begin{equation}\label{eq:lin1}
    \hat{A}(M)=\tfrac{1}{2^{11}\cdot3^4\cdot5^2\cdot7}\left(13\,p_{(2,2)}-2^2\cdot3\,p_{(4)}\right)=0.
 \end{equation}
Since \(R\) is \(r\)-positive with $r=PW(\pi_{S}\otimes \wedge^2\pi_{\omega_1})$ by Proposition~\ref{prop:PWsympwedgep}, we obtain from Corollary~\ref{pointwise} (ii) that \(K(R,\pi_S\otimes\wedge^2\pi_{\omega_1})\succ0\), and thus that \(\mathcal R_{\pi}\succ0\) for $\pi=\wedge^2\pi_{\omega_1}$ by Lemma~\ref{HitchinDirac}. 
Therefore, \(\hat{A}(M,\wedge^2TM_\C)=0.\)  Using \eqref{eq:ch-weights}, or computing \(\operatorname{ch}(\wedge_tTM)\) with the splitting principle and the multiplicative property of \(\wedge_t\), see \eqref{eq:symwedget}, we have
\begin{equation}\label{eq:lin2}
	\hat{A}(M,\wedge^2TM_\C)=\tfrac{1}{2^8\cdot3\cdot5\cdot7}\left(101\,p_{(2,2)}+2^2\cdot 149\,p_{(4)}\right)=0.
\end{equation}
Since the homogeneous linear system given by \eqref{eq:lin1} and \eqref{eq:lin2} only admits the trivial solution, it follows that $p_{(2,2)}=p_{(4)}=0$, so all  Pontryagin numbers vanish.

If \(k=5\), then \(r<16\), so \(p_1=p_4=0\), and by the same arguments, we have
\begin{align*}
\hat{A}(M)&=\tfrac{1}{{2^{11}\cdot{3^5}\cdot{5^2}\cdot{7}\cdot{11}}}\left(3\cdot7\,p_{(2,3)}-2\cdot5\,p_{(5)}\right)=0,\\
\hat{A}(M,\wedge^2TM_\C)&=\tfrac{1}{{{2^{10}}\cdot{3^5}\cdot{5^2}\cdot{7}\cdot{11}}}\left({{3}\cdot{7}\cdot{23}\cdot{73}}\,p_{(2,3)}-2\cdot 5\cdot 13\cdot 5003\,p_{(5)}\right)=0,
\end{align*}
which only has the trivial solution, so all Pontryagin numbers vanish. 

Finally, if \(k=7\), then \(r<20\), so \(p_1=p_2=p_5=p_6=0\), and similarly we have
\begin{align*}
\hat{A}(M)&=\tfrac{1}{{{2^{15}}\cdot{3^6}\cdot{5^3}\cdot{7^2}\cdot{11}\cdot{13}}}\left(283\,p_{(3,4)}-2^2\cdot 5\cdot 7\,p_{(7)}\right)=0,\\
\hat{A}(M,\wedge^2TM_\C)&=\tfrac{1}{{{2^{14}}\cdot{3^5}\cdot{5^3}\cdot{7^2}\cdot{11}\cdot{13}}}\left(-227\cdot1009\,p_{(3,4)}-{{2^2}\cdot{5}\cdot{7}\cdot{32719}}\,p_{(7)}\right)=0,
\end{align*}
which only has the trivial solution, so all Pontryagin numbers vanish.
\end{proof}

The only dimensions $n=4k$ not addressed in Theorem~\ref{mainthm:D} nor in Theorem~\ref{thm:left-overs} are the cases $k=1$ and $3$. For $k=1$, it follows directly from the vanishing of \(\hat{A}(M)\) that a closed Riemannian spin $4$-manifold with $\scal>0$ is null-cobordant. For $k=3$, Theorem~\ref{thm:pw} implies that a closed Riemannian $12$-manifold with \(8\)-positive curvature operator has \(p_1=p_2=0.\) If such $M^{12}$ is spin, then \(\hat{A}(M)=0\) and thus \(p_3=0,\) so once again $M$ is rationally null-cobordant.

\section{Elliptic genus and Witten genus}\label{witten-genus}

In this section, we use modularity of the elliptic genus \(\varphi\) and of the Witten genus \(\varphi_W\), as defined in Section~\ref{sec: modular forms}, to derive sufficient conditions for their vanishing (Theorem~\ref{mainthm:witten}). First, we prove a  lemma with elementary considerations in the theory of modular forms; recall the definition \eqref{eq: gamma0} of the subgroup \(\Gamma_0(2)\subset \SL(2,\Z)\).

\begin{lemma}\label{ordinf}
    Let \(f\in M_m(\SL(2,\Z))\) be a modular form of weight $m$. If
    \begin{enumerate}[\rm (i)]
        \item \(m\not\equiv 2 \mod 12\) and \(\operatorname{ord}_\infty(f)> \floor{\tfrac{m}{12}}\), or 
        \item \(m\equiv 2 \mod 12\) and \(\operatorname{ord}_\infty(f)>\floor{\tfrac{m}{12}}-1\),
    \end{enumerate} 
then \(f=0.\) If \(g\in M_m(\Gamma_0(2))\) and \(\operatorname{ord}_\infty(g)>\floor{\tfrac{m}{4}}\), then \(g=0.\)  
\end{lemma}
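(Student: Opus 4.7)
The plan is to derive both statements from the classical valence ($k/12$) formulas for modular forms on $\SL(2,\Z)$ and $\Gamma_0(2)$, closing the one borderline case via division by the modular discriminant $\Delta$.

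For $\SL(2,\Z)$, I would apply the valence formula: for a nonzero $f \in M_m(\SL(2,\Z))$,
\[\operatorname{ord}_\infty(f) + \tfrac12\operatorname{ord}_i(f) + \tfrac13\operatorname{ord}_\zeta(f) + \sum_{[\tau] \neq [i],[\zeta]} \operatorname{ord}_\tau(f) = \tfrac{m}{12},\]
with $\zeta = e^{2\pi i/3}$. Since $T \in \SL(2,\Z)$, the expansion at $\infty$ is in integer powers of $q=e^{2\pi i\tau}$, so $\operatorname{ord}_\infty(f) \in \Z_{\geq 0}$, and the formula gives $\operatorname{ord}_\infty(f) \leq m/12$. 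For case (i), a short check depending on $m \bmod 12$ shows that $\floor{m/12} + 1 > m/12$ whenever $m \not\equiv 2 \pmod{12}$; hence the hypothesis $\operatorname{ord}_\infty(f) > \floor{m/12}$ forces $\operatorname{ord}_\infty(f) > m/12$, contradicting the bound. The odd-$m$ case is vacuous since $M_m(\SL(2,\Z)) = 0$.

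Case (ii), $m \equiv 2 \pmod{12}$, is the substantive one: here the bound allows $\operatorname{ord}_\infty(f) = \floor{m/12}$, so the previous strategy fails. To close this gap I would divide by the discriminant $\Delta \in S_{12}(\SL(2,\Z))$, which satisfies $\operatorname{ord}_\infty(\Delta) = 1$ and is nonvanishing on $\mathfrak{H}$. Setting $k = \floor{m/12}$, the assumption gives $\operatorname{ord}_\infty(f) \geq k$, so $f/\Delta^k$ extends to a holomorphic modular form of weight $m - 12k = 2$ on $\SL(2,\Z)$; since $M_2(\SL(2,\Z)) = 0$, we conclude $f = 0$. This $\Delta$-division is the only genuinely substantive step.

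For $\Gamma_0(2)$, I would use the analogous valence formula, rescaled by the index $[\mathsf{PSL}(2,\Z):\overline{\Gamma_0(2)}] = 3$ and reflecting the fact that $\Gamma_0(2)$ has two cusps and a single elliptic fixed point $e$ of order $2$:
\[\operatorname{ord}_\infty(g) + \operatorname{ord}_0(g) + \tfrac12\operatorname{ord}_e(g) + \sum_{[\tau]\neq [e]} \operatorname{ord}_\tau(g) = \tfrac{m}{4}.\]
Since $T \in \Gamma_0(2)$, the cusp at $\infty$ has width $1$ and $\operatorname{ord}_\infty(g) \in \Z_{\geq 0}$. The inequality $\floor{m/4} + 1 > m/4$ holds for every $m$, so the hypothesis $\operatorname{ord}_\infty(g) > \floor{m/4}$ violates the valence formula and yields $g = 0$; no $\Delta$-division is needed here, since the strict inequality is automatic.
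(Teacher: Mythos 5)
Your argument is correct, and for case (i) and the $\Gamma_0(2)$ statement it is essentially the paper's proof: the valence formula, nonnegativity of all local orders, and integrality of $\operatorname{ord}_\infty$ (because $\begin{psmallmatrix}1&1\\0&1\end{psmallmatrix}$ lies in the group) force $\operatorname{ord}_\infty\leq m/12$, resp.\ $\leq m/4$, contradicting the hypothesis. (Your $\Gamma_0(2)$ formula has coefficient $1$ on $\operatorname{ord}_0(g)$ where the paper has $2$; this is just the width-$2$ normalization of the order at the cusp $0$ versus the paper's convention $\ell_0/N$ with $N=2$, and since the term is nonnegative either way it is immaterial.) Where you genuinely diverge is case (ii), $m\equiv 2\bmod 12$: the paper stays inside the valence formula, reducing it mod $6$ to get $4\,\operatorname{ord}_{e^{2\pi i/3}}(f)\equiv 2\bmod 6$, hence $\operatorname{ord}_{e^{2\pi i/3}}(f)\geq 2$, which pushes $\operatorname{ord}_\infty(f)$ strictly below $\floor{m/12}$ and gives the contradiction; you instead divide by $\Delta^{\floor{m/12}}$, using that $\Delta$ is nonvanishing on $\mathfrak H$ with $\operatorname{ord}_\infty(\Delta)=1$, to land in $M_2(\SL(2,\Z))=0$. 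Both are standard and complete; your $\Delta$-division is the classical structural argument and arguably more transparent (it makes clear why weight $2$ is the special residue), while the paper's congruence trick avoids invoking $\Delta$ and the vanishing of $M_2$, using only the single valence identity it has already quoted.
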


\begin{proof}
    As \(\begin{psmallmatrix}1&1\\0&1\end{psmallmatrix} \in\Gamma_0(2)\), we have \(f(\tau+1)=f(\tau)\) and \(g(\tau+1)=g(\tau)\), so \(f\) and \(g\) have a Fourier expansion in integer powers of \(q=e^{2\pi i\tau}\) and \(\mathrm{ord}_\infty(f),\mathrm{ord}_\infty(g)\in\Z\) are nonnegative integers, i.e., \(N=1\) in the notation of Section~\ref{sec: modular forms}. If \(f\neq0\), then 
    \[\textstyle\sum_{\tau\neq i,\,e^{2\pi i/3}}\mathrm{ord}_\tau(f)+\tfrac{1}{3}\,\mathrm{ord}_{e^{2\pi i/3}}(f)+\tfrac{1}{2}\,\mathrm{ord}_{i}(f)+\mathrm{ord}_\infty(f)=\tfrac{m}{12},\]
    see \cite[Appendix I, Thm~4.1]{HBJ} or \cite[Prop.~2]{zagier}. 
    In case (i), as all terms on the left-hand side are nonnegative, 
    it is strictly larger than the right-hand side yielding the desired contradiction.
    In case (ii), we have \(m\equiv 2 \mod 12\), so if \(f\neq 0\),
    \[4\,\mathrm{ord}_{e^{2\pi i/3}}(f)\equiv 2\mod 6.\]
    Since \(\mathrm{ord}_{e^{2\pi i/3}}(f)\geq 0\), by the above \(\mathrm{ord}_{e^{2\pi i/3}}(f)\geq 2\), so we obtain the contradiction
    \[\mathrm{ord}_\infty(f)\leq \tfrac16+\floor{\tfrac{m}{12}}-\tfrac23<\floor{\tfrac{m}{12}}.\]
    The final statement follows once again by contradiction using the equivalent formula for \(g\in M_m(\Gamma_0(2))\), see \cite[Appendix I, \S 4.2]{HBJ}, namely, if $g\neq 0$ then
    \[\textstyle\sum_{\tau\neq\frac{1+i}{2}}\mathrm{ord}_\tau(g)+\tfrac{1}{2}\,\mathrm{ord}_{\frac{1+i}{2}}(g)+2\,\mathrm{ord}_0(g)+\mathrm{ord}_\infty(g)=\tfrac{m}{4}.\qedhere\]
\end{proof}

We now prove Theorem~\ref{mainthm:witten} using the above lemma together with Theorem~\ref{mainthm:A} to find curvature conditions which imply that sufficiently many coefficients of the Fourier expansion of \(\varphi\) or \(\varphi_W\) vanish so that the entire modular form vanishes.

\begin{proof}[Proof of Theorem~\ref{mainthm:witten}]
	Consider the formal power series of bundles, see \eqref{eq:symwedget}, 
	\[\textstyle\bigotimes\limits_{\ell=1}^\infty\Sym_{q^{\ell}}TM\quad\text{ and }\quad \bigotimes\limits_{\ell=1}^\infty\wedge_{-q^{2\ell-1}}TM\otimes\Sym_{q^{2\ell}}TM.\]
	Each is a product of sums of terms of the form \(Eq^d\), with \(E\subseteq TM^{\otimes s}\) and \(s \leq d\). Multiplication of two such terms preserves that property, so the coefficient of \(q^d\) in each bundle is a (formal) linear combination of parallel subbundles of \(TM^{\otimes s}\) with \(s\leq d\). Such property is also preserved multiplying by a power series with scalar coefficients.
	Thus, the coefficient of \(q^d\) in \(\varphi_W(M)(\tau)\) and in \(\widetilde\varphi(M)(2\tau)\), see \eqref{eq:witten-genus} and \eqref{tildephi} respectively, is a linear combination of terms \(\hat{A}(M,E_\C)\) for parallel subbundles \(E\subseteq TM^{\otimes s}\), \(s\leq d\).

	To prove (ii), if \(C_{\floor{k/2}}(R)>0\), then all such terms \(\hat{A}(M,E_\C)\) vanish for \(d\leq \floor{k/2}\) by Theorem~\ref{mainthm:A}, hence \(\operatorname{ord}_\infty(\tilde\varphi(M)(2\tau))>\floor{k/2}\). Since \(\widetilde\varphi(M)(2\tau)\in M_{2k}(\Gamma_0(2))\), as $n=2m=4k$, Lemma~\ref{ordinf} implies that \(\widetilde\varphi(M)(2\tau)=0\), so \(\varphi(M)=0\) by \eqref{eq: phi_phi_tilde}.
	
	Next, to prove (i), assume \(p_1(TM)=0\), so that \(\varphi_W(M)(\tau)\in M_{2k}(\SL(2,\Z))\). If \(C_p(R)>0\), with \(p\) as in the statement of the theorem, then,  as above, Theorem~\ref{mainthm:A} implies that \(\operatorname{ord}_\infty(\varphi_W(M)(\tau))>p\), so \(\varphi_W(M)=0\) by Lemma~\ref{ordinf} with $m=2k$.   
\end{proof}

\section{Examples and surgery stability}\label{section: ex}

In this section, we examine some examples of closed Riemannian manifolds \((M^{n},\g)\) whose curvature operator \(R\) satisfies \(C_p(R)>0\) and prove Theorem~\ref{A-props}.

As a first example, consider the unit round sphere $\Ss^n$, for which $R_{\Ss^n}=\operatorname{Id}$, hence
\begin{equation}\label{eq:CpSphere}
C_p(R_{\Ss^n})= \tfrac14 n^2 -\left(p+ \tfrac14\right)n - p(p-2), \quad p\geq1.
\end{equation}
Recall that the curvature operator $R_{M\times N}$ of a product $M\times N$ is $R_M\oplus R_N\oplus 0$ on $\wedge^2(T(M\times N))\cong\wedge^2 TM\oplus\wedge^2 TN\oplus (TM\otimes TN)$.
 
\begin{proposition}\label{A-products}
	The following hold:
	\begin{enumerate}[\rm (i)]
		\item Let \((M^{n},\g)\) be a Riemannian manifold of dimension $n$ with an Einstein metric  satisfying $R\succeq0$ and $\scal>0$. (For instance, $M$ can be chosen to be a product of compact rank one symmetric spaces \(\Ss^{q}\), $\C P^{q}$, $\mathds{H}P^q$, \(\Ca P^2\).) Let \(H_{i_1 j_1},\dots,H_{i_k j_k}\) be Milnor surfaces, see Example~\ref{ex:generators-cobordism}, and \(N\) be any closed manifold. If \(n+2(j_1-1)+\dots +2(j_k-1)>8\), then the manifold
		\[M \times H_{i_1 j_1}\times\dots \times H_{i_k j_k}\times N\]
		admits a metric with \(C_1(R)>0\).
		\item The condition \(C_1(R)>0\) is stable under surgeries of codimension \(d\geq 10\).
		\item The condition \(C_p(R)>0\), $p\geq 2$, is stable under surgeries of codimension \(d\) on manifolds of dimension $n$ provided that \((d-1)(d-2)>8p(p+n-2)\).
		\end{enumerate}
\end{proposition}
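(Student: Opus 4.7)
My plan is to handle the three parts separately: parts (ii) and (iii) fit directly into Hoelzel's surgery stability framework (\Cref{hoelzel}), while part (i) requires an explicit metric construction combining canonical variations of Riemannian submersions with large rescalings.

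For parts (ii) and (iii), the first step is to verify that $\{R \in \Sym^2_b(\wedge^2\R^n) : C_p(R) > 0\}$ is an open convex $\mathsf O(n)$-invariant cone. Invariance and positive homogeneity of degree one are immediate; convexity follows because $R\mapsto\Sigma(r,R)$ and $R\mapsto\Sigma(r,-R)$ are concave (as noted after \Cref{def:sigmarR}), $R\mapsto\mu$ is convex as a maximum over unit vectors of the linear functionals $v\mapsto\Ric(v,v)$, a positive multiple of a concave function is concave, and a minimum of concave functions is concave. The second step is to verify $C_p(R_d) > 0$ on the curvature operator $R_d$ of $\R^{n-d+1}\times\Ss^{d-1}$, whose spectrum consists of $\binom{d-1}{2}$ ones and $\binom{n}{2}-\binom{d-1}{2}$ zeros, with $\mu=d-2$ and $\scal=(d-1)(d-2)$. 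For $p=1$ and $d\geq 10$, $\scal/8-\mu=(d-2)(d-9)/8>0$, and the $\min$-term is $\geq 0$ since $R_d\succeq 0$, so $C_1(R_d)>0$. For $p>1$, the hypothesis $(d-1)(d-2)>8p(p+n-2)$ forces $\binom{d-1}{2}>r'_p$, whence $\Sigma(r'_p,-R_d)=-r'_p$ and
\[
\tfrac{\scal}{8}+p^2\,\Sigma(r'_p,-R_d)=\tfrac{(d-1)(d-2)}{8}-p(n+p-2)>0,
\]
while again the $\min$-term is $\geq 0$; hence $C_p(R_d)>0$. Hoelzel's theorem now yields both parts.

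For part (i), the starting observation is that any closed Einstein manifold $\mathcal G$ with $R\succeq 0$, $\scal>0$, and $\dim\mathcal G>8$ satisfies $C_1(R_{\mathcal G})>0$ automatically, since $\Sigma(r_1,R_{\mathcal G})\geq 0$ and $\scal_{\mathcal G}/8-\mu_{\mathcal G}=\lambda_{\mathcal G}(\dim\mathcal G-8)/8>0$. I would first dispose of the case $k=0$ by equipping $M\times N$ with $g_M\oplus \delta\,g_N$ for $\delta$ large: as $\delta\to\infty$ all spectral contributions from $N$ vanish, so $C_1>0$ persists by continuity. For $k\geq 1$, I equip each Milnor hypersurface $H_{i_\ell j_\ell}$ with a Riemannian submersion metric coming from the fibration $H_{i_\ell j_\ell}\to\C P^{i_\ell}$ with fiber $\C P^{j_\ell-1}$, scaling the base to have arbitrarily small curvature and rescaling the fibers so that their Fubini--Study Einstein constant agrees with $\lambda_M$. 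In the product $P=M\times\prod_\ell H_{i_\ell j_\ell}\times N$, the dominant part of the spectrum of $R_P$ comes from $R_M$ and from the fiber Fubini--Study blocks (all nonnegative), so $\Sigma(r_1,R_P)$ stays close to being nonnegative; moreover $\mu_P\approx\lambda_M$ and $\scal_P\approx \lambda_M(n+\sum 2(j_\ell-1))$, so $\scal_P/8-\mu_P>0$ exactly under the dimension hypothesis. Choosing the scaling parameters in the appropriate order (base scalings of each $H_{i_\ell j_\ell}$ first, then $\delta$ for $N$) keeps the error terms negligible.

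The main obstacle is controlling the O'Neill $A$-tensor corrections for each submersion $H_{i_\ell j_\ell}\to\C P^{i_\ell}$: these contribute terms to the curvature operator of $H_{i_\ell j_\ell}$ that are not automatically nonnegative, and they must be shown to remain small, uniformly, as the base is scaled up, so that neither $\Sigma(r_1,R_P)$ nor $\scal_P/8-\mu_P$ changes sign. A direct O'Neill computation, or equivalently a Cheeger deformation with respect to the natural isometric $\mathsf U$-action on $H_{ij}$, should quantify these corrections and close the argument. Parts (ii) and (iii) present no comparable difficulty, since the spectral data of $R_d$ is completely explicit.
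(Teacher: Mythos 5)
Your proposal is correct and follows essentially the same route as the paper: parts (ii) and (iii) via the convexity/$\mathsf O(n)$-invariance of $\{C_p(R)>0\}$ and the explicit spectrum of $R_d$ for $\R^{n-d+1}\times\Ss^{d-1}$ fed into \Cref{hoelzel}, and part (i) via submersion/connection metrics on the Milnor hypersurfaces with blown-up base and a rescaled factor $N$, comparing with the nonnegative Einstein limit operator $R_*$ for which $C_1(R_*)\geq\tfrac{\scal}{8}-\tfrac{\scal}{\ell}>0$. The O'Neill $A$-tensor issue you flag is exactly the standard fact the paper invokes without proof (for a connection metric with fixed principal connection, blowing up the base makes the curvature operator converge to that of $\C P^{j-1}\times\R^{2i}$), so your argument closes in the same way.
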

 
\begin{proof}
The Milnor surface \(H_{ij}\) is the total space of a \(\C P^{j-1}\)-bundle over \(\C P^i\). It can be equipped with a connection metric using the two Fubini--Study metrics, and by scaling the metric on the base by a large positive constant the curvature operator can be made arbitrarily close to that of \(\C P^{j-1}\times \R^{2i}\) with the standard product metric. By scaling any metric on \(N\) similarly, the  manifold in (i) admits a metric with curvature operator \(R\) arbitrarily close to the curvature operator $R_*$ of 
\[{M}\times\C P^{j_1-1}\times\dots\times\C P^{j_k-1}\times \R^{2(i_1+\dots+i_k)+\dim N},\]
where the non-Euclidean factors can be scaled such that the metric on their product is Einstein. Since $R_*\succeq0$ and the largest eigenvalue of the Ricci operator of $R_*$ is $\mu=\frac{\scal}{\ell}$, where \(\ell=n+2(j_1-1)+\dots+2(j_k-1)\), we have that $C_1(R_*)\geq  \frac{\scal}{8}-\frac{\scal}{\ell}$. In particular, with $\ell>8$, it follows that \(C_1(R)\) is arbitrarily close to $C_1(R_*)>0$, hence $C_1(R)>0$.

For each $p\geq1$, the set $\big\{R \in \Sym^2_b(\wedge^2\R^n) : C_p(R)>0\big\}$ is an open convex $\mathsf O(n)$-invariant cone, since \(\scal\) and \(\Ric\) are linear in \(R\), and \(\Sigma(r,R)\) is concave in~\(R\). Let $R_d$, $3\leq d\leq n$, be the curvature operator of the product metric on $\R^{n-d+1}\times \Ss^{d-1}$. One easily checks that $r_p<\dim\ker R_d$ for all $p\geq1$, so $C_1(R_d)>0$ if $d\geq10$, and 
\[C_p(R_d)\geq \tfrac18 (d-1)(d-2) -p(p+n-2), \quad \text{ if }p\geq2.\]
Therefore, statements (ii) and (iii) follow directly from Theorem~\ref{hoelzel}.
\end{proof}

\begin{remark}
    For all $n\geq4$ and $p\geq2$, the convex cone of curvature operators $R\in \Sym^2_b(\wedge^2\R^n)$ satisfying $C_p(R)\geq0$ is a \emph{spectrahedron}, as a consequence of \cite[Thm.~3.3]{sanyal2022spectral}. Thus, determining membership in this set is an algebraic task that can be efficiently completed using semidefinite programming, see \cite{siaga}.
\end{remark}

As a consequence of Proposition~\ref{A-products} (i), the curvature condition \(C_1(R)>0\) imposes no restriction on the Betti numbers \(b_1,\dots,b_{n-9}\) of an orientable manifold \(M^{n}\) of dimension \(n\geq10\); in particular, by Poincar\'e duality, no restrictions on \emph{any Betti numbers} if \(n\geq18\). Next, we show that in the absence of the spin condition, \(C_1(R)>0\) imposes \emph{no restriction} on the rational cobordism type nor on Pontryagin numbers in large enough dimensions, while, in the spin setting, the consequences of Theorem~\ref{mainthm:A} are the \emph{only} restriction on rational spin cobordism type:
  
\begin{proposition}\label{A-cobordism}
	The following hold:
	\begin{enumerate}[\rm (i)]
		\item If \([M^{n}]\in\Omega_{n}^\SO\) is not torsion, \(n\geq10\), then \(M\) is oriented cobordant to a connected manifold admitting a metric with \(C_1(R)>0\).
		\item If, furthermore, \(M^{n}\) is spin and \(\hat{A}(M)=\hat{A}(M,TM_\C)=0\), then, for some \(\ell \), the manifold \(\#^\ell M^n\) is spin cobordant to a manifold with \(C_1(R)>0\).    
	\end{enumerate}
\end{proposition}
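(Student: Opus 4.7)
The plan is to reduce both parts to \Cref{A-products}: part~(i) of that proposition supplies many products of compact rank one symmetric spaces and Milnor hypersurfaces admitting metrics with $C_1(R)>0$, while part~(ii) ensures stability of $C_1(R)>0$ under connect sums (codimension-$n$ surgeries, allowed for $n\geq 10$). Accordingly, I plan to decompose $[M]$ as a sum of such products and glue geometric representatives.

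For part (i), if $[M]=0$, take $N=\Ss^n$, which has $C_1(R)>0$ for $n\geq 10$ by~\eqref{eq:CpSphere}. If $[M]$ has infinite order, I use \Cref{thom-milnor}~(ii) together with \Cref{ex:generators-cobordism} to write $[M]=\sum_I a_I[P_I]$ in $\Omega_n^{\SO}/\text{torsion}$, with each $P_I$ a monomial in the generators $\C P^{2k}$ and $H_{ij}$, and $a_I\in\Z$. Each $P_I$ admits a metric with $C_1(R)>0$ by \Cref{A-products}~(i), after possibly padding by an auxiliary sphere factor (which does not change the cobordism class since $[\Ss^q]=0$) to satisfy the dimensional constraint. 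Orientation reversal is harmless for $C_1(R)$, so the disjoint union $\bigsqcup_I |a_I|\cdot(\operatorname{sgn}(a_I)P_I)$ admits such a metric; iterated connect sums, each a codimension-$n$ surgery with $n\geq 10$, then yield a connected $N_0$ with $C_1(R)>0$ and $[N_0]\equiv [M]$ modulo torsion. Since all torsion in $\Omega_*^{\SO}$ is $2$-torsion, the residue $[M]-[N_0]$ is $2$-torsion, which I realize by a connected manifold $T'$ with $C_1(R)>0$ (produced by representing each torsion generator as a product involving a high-dimensional factor that dominates the curvature); connect-summing $T'$ with $N_0$ gives the desired $N$.

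For part (ii), the rational isomorphism $\Omega_*^{\Spin}\otimes\mathds{Q}\cong\Omega_*^{\SO}\otimes\mathds{Q}$ lets me write $\ell[M]=\sum_J b_J[Q_J]$ in $\Omega_n^{\Spin}/\text{torsion}$ for some integer $\ell$, where each $Q_J$ is a product of spin generators. The hypothesis $\hat{A}(M)=\hat{A}(M,TM_\C)=0$, multiplied through by $\ell$, constrains $\sum b_J[Q_J]$ to the joint kernel of the genera $\hat{A}$ and $\hat{A}(\cdot,T\cdot_\C)$, a graded subring of codimension at most $2$ in each degree $4k$ with $k\geq 2$. I then choose $Q_J$ to be spin manifolds generating this kernel subring and admitting $C_1(R)>0$ via \Cref{A-products}~(i)—for example, products $\mathds H P^{k_1}\times\dots\times H_{ij}\times\dots$; the existence of enough such generators in each relevant degree is verified in \Cref{section: ex}. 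Connect-summing according to the coefficients $b_J$, using \Cref{A-products}~(ii), produces a connected spin Riemannian manifold with $C_1(R)>0$ that is spin cobordant to $\#^\ell M$.

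The principal obstacle is realizing torsion classes: in (i), exhibiting representatives of every $2$-torsion class in $\Omega_n^{\SO}$ ($n\geq 10$) admitting $C_1(R)>0$; and in (ii), producing enough spin manifolds with $\hat{A}=\hat{A}(TM_\C)=0$ and $C_1(R)>0$ to integrally span the relevant kernel subring. Both reduce to verifying explicit examples, rather than to a single structural argument.
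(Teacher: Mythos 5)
Your skeleton---decompose the cobordism class into monomials in ring generators, endow each monomial with a metric having $C_1(R)>0$ via \Cref{A-products}~(i), and assemble by connected sums via \Cref{A-products}~(ii)---is the same as the paper's, but the execution has genuine gaps in both parts. In (i), the ``sphere padding'' step is wrong: $[P\times\Ss^q]=0$ in $\Omega^\SO_*$ (the product bounds $P\times D^{q+1}$), so padding does not preserve the class, it kills it. The paper avoids the dimensional issue differently, by shrinking the generating set to $\{\C P^{2m},\,H_{ij}: i\geq2,\ j\geq 6\}$ (still generating $\Omega_*^\SO/\text{torsion}$ by the divisibility argument of \Cref{ex:generators-cobordism}), so that \emph{every} monomial of dimension $\geq 10$ automatically satisfies the hypothesis $n+2(j_1-1)+\dots+2(j_k-1)>8$ of \Cref{A-products}~(i). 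Your torsion step is also unsupported: the claim that every $2$-torsion class is represented by a connected manifold with $C_1(R)>0$, ``produced by a product with a high-dimensional factor that dominates the curvature,'' has no justification---torsion classes are carried by Dold/Wall-type manifolds, which are not products with a large Einstein factor having $R\succeq0$, so \Cref{A-products}~(i) gives no purchase on them. The paper never makes such a claim (this is exactly why nontrivial torsion classes are excluded in \Cref{A-props}); its proof simply realizes the integer combination of generator monomials coming from $[M]$ modulo torsion.

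In (ii) the heart of the argument is missing. Everything rests on knowing that the common kernel of $\hat A$ and $\hat A(\,\cdot\,,T(\cdot)_\C)$ in $\Omega_n^\Spin\otimes\mathds{Q}$ is spanned by classes of manifolds to which \Cref{A-products}~(i) applies; you assert this and defer to \Cref{section: ex}, but the relevant content of that section \emph{is} the present proposition, so the deferral is circular, and your proposed generators are problematic since the Milnor hypersurfaces $H_{ij}$ are in general not spin ($w_2$ is the mod $2$ reduction of $(ix+jy)|_{H_{ij}}$). The paper's actual route is concrete: take the rational spin generators $K^4$ (a $K3$ surface) and $\mathds H P^k$, $k\geq2$, from \Cref{thom-milnor}~(i), and use
\begin{equation*}
\hat{A}\big(M\times N,T(M\times N)_\C\big)=\hat{A}(M)\,\hat{A}(N,TN_\C)+\hat{A}(N)\,\hat{A}(M,TM_\C)
\end{equation*}
to see that the only monomials on which $\hat A$ or $\hat A(\,\cdot\,,T(\cdot)_\C)$ is nonzero are $(K^4)^q$ and $(K^4)^{q-2}\times\mathds H P^2$. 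The hypotheses $\hat A(M)=\hat A(M,TM_\C)=0$ therefore annihilate precisely the coefficients of those two monomials, and every remaining monomial contains $\mathds H P^2\times\mathds H P^2$ or $\mathds H P^k$, $k>2$, as a factor, hence admits $C_1(R)>0$ by \Cref{A-products}~(i) with all other factors (including the $K^4$'s) absorbed into the arbitrary manifold $N$. Without this explicit identification of the ``bad'' monomials and verification that the ``good'' ones carry $C_1(R)>0$, your argument for (ii) does not close.
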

  
\begin{proof}
By Theorem~\ref{thom-milnor} (ii), the set \(\big\{[\C P^{2m}], \, [H_{ij}] : m\geq1,\, i\geq2,\, j\geq6\big\}\) generates \(\Omega_*^{\SO}/\text{torsion}\). Thus, \([M^{n}]\) can be represented by an  integer linear combination of products to which Proposition~\ref{A-products} (i) applies. Furthermore \(C_1(R)>0\) is preserved by connected sums (surgeries of codimension \(d=n\)) by Proposition~\ref{A-products}~(ii). Therefore, the linear combination can be replaced with a connected sum with the proper orientations, while preserving the curvature condition, which proves (i).
 
Let \(K^4\) represent a generator of \(\Omega_{4}^\Spin\), e.g., a $K3$ surface, so that \(\hat{A}(K^4)=-2\). By Theorem~\ref{thom-milnor} (i), the set \(\{[K^4],[\mathds{H}P^k] : k\geq 2 \}\) generates \(\Omega_*^\Spin\otimes\mathds{Q}.\) Noting that 
 \[\hat{A}\big(M\times N,T(M\times N)_\C\big)=\hat{A}(M)\,\hat{A}(N,TN_\C)+\hat{A}(N)\,\hat{A}(M,TM_\C),\]  
it follows that a product \(N^q\) of elements of the above generating set satisfies \(\hat{A}(N)=\hat{A}(N,TN_\C)=0\) unless it is \((K^4)^q\), for which \(\hat{A}(N)=(-2)^q\), or \((K^4)^{q-2}\times \mathds{H}P^2\), for which \(\hat{A}(N,TN_\C)=-(-2)^{q-2}\). So, if the conditions in (ii) hold, \([M]\) is represented by a rational linear combination of products with factors either \(\mathds{H}P^2\times \mathds{H}P^2\) or \(\mathds{H}P^k\), \(k>2\).  Each such product admits a metric with \(C_1(R)>0\) by Proposition~\ref{A-products} (i). Thus, there is an integer \(\ell \) such that \(\#^\ell M^n\) is spin cobordant to a connected sum (of the above products) admitting a metric with \(C_1(R)>0.\)
 \end{proof}

Theorem~\ref{A-props} follows from Proposition~\ref{A-products} together with Proposition~\ref{A-cobordism}.

\smallskip

Let us now discuss examples of manifolds that admit $C_p(R)>0$, \(p\geq2\). First, by \eqref{eq:CpSphere} and Proposition~\ref{A-products} (iii), the condition \(C_p(R)>0\) is satisfied by round spheres in sufficiently large dimensions, and is stable under connected sums and surgeries of high codimension. Thus, we can construct examples of manifolds with \(C_p(R)>0\) having arbitrarily large first Betti number, as well as other Betti numbers of low degree.
In order to analyze examples which are not null-cobordant, recall the spectrum of the curvature operator of compact rank one symmetric spaces \cite{bk}.

\begin{table}[ht]
\begin{tabular}{|c|l|l|}
\hline
$M$ & Eigenvalues \rule[-2ex]{0pt}{0pt} \rule{0pt}{3ex} & Multiplicity \\
\hline \noalign{\medskip} \hline 
$\C P^m$, $m\geq 2$ & 
\begin{tabular}{l}
	$0$\rule[-1.2ex]{0pt}{0pt} \rule{0pt}{2.5ex} \\
	$2$ \\
	$2m+2$ 
\end{tabular}
 & 
 \begin{tabular}{l}
	$m(m-1)$ \\
	$m^2-1$ \\
	$1$ 
\end{tabular}   \\
\hline
$\mathds H P^{k}$, $k\geq 2$ & 
\begin{tabular}{l}
	$0$ \rule[-1.2ex]{0pt}{0pt} \rule{0pt}{2.5ex} \\
	$4$ \\
	$4k\phantom{+m.}$ 
\end{tabular}
 & 
 \begin{tabular}{l}
	$3(2k+1)(k-1)$ \\
	$k(2k+1)$ \\
	$3$ 
\end{tabular} \\
\hline
$\Ca P^{2}$ & 
\begin{tabular}{l}
	$0\phantom{abc;}$ \rule[-1.2ex]{0pt}{0pt} \rule{0pt}{2.5ex} \\
	$8\phantom{abc;}$
\end{tabular}
 & 
 \begin{tabular}{l}
	$84$ \\
	$36$ 
\end{tabular} \\
\hline
\end{tabular}
\vspace{-.2cm}
\caption{Eigenvalues of the curvature operator $R_M$ of projective spaces $M$ endowed with the Fubini--Study metric with $1\leq\sec\leq 4$.}
\label{tab:eigenv}
\end{table}

All curvature operators $R$ in Table~\ref{tab:eigenv} satisfy $\dim\ker R>r_p$ and $\dim\operatorname{Im} R>r'_p$, hence are such that $C_p(R)=\frac{\scal}{8}+p^2\Sigma({r'_p},-R)$.
Thus, it follows that, for all \(p\geq 2\),
\begin{equation}\label{eq:CpR-crosses}
\begin{aligned}
C_p(R_{\C P^m}) &= \tfrac{1}{2}m^2+\big(\tfrac12-4p-2p^2\big)m-2p(p-2),\\
C_p(R_{\mathds H P^k}) &= \begin{cases}
    2 k^2+4  \left(1-4 p-3 p^2\right) k +8 p (p+1), & \text{ if } p \leq 2k-1,\\
    2 (1-8 p) k^2 + 4\left(1+2 p- p^2\right)k, & \text{ if } p  > 2k-1.
\end{cases}
\end{aligned}
\end{equation}
Clearly, for each fixed $p\geq 2$, the above are positive in sufficiently large dimension; while, for each fixed dimension, they are positive for finitely many $p\geq 2$.
However,
\[C_p(R_{\Ca P^2}) = 72-112 p-8 p^2 \]
is negative for all $p\geq 2$. Note this is in accordance with Theorem~\ref{mainthm:A}, since, e.g., $M=\Ca P^2$ has $\hat A(M,\wedge^2 TM_\C)\neq0$ hence does not admit a metric with $C_2(R)>0$.

\smallskip

Next, consider \(p\) defined in terms of the dimension $n=4k$ as in Theorem~\ref{mainthm:witten} (i): 
\[p=\lfloor{\tfrac{k}{6}}\rfloor-1 \text{ if } k\equiv 1\mod 6, \quad \text{ and } p=\lfloor{\tfrac{k}{6}}\rfloor \text{ otherwise}.\]
For \(p=1\), corresponding to dimensions $n$ between $24$ and $44$ as well as dimension $n=52$, in which the Witten genus $\varphi_W(M)$ is determined by \(\hat{A}(M)\) and \(\hat{A}(M,TM_\C)\), the examples of Propositions~\ref{A-products} and \ref{A-cobordism} demonstrate that a spin manifold with \(p_1(TM)=0\) has vanishing Witten genus if and only if it is rationally cobordant to a spin manifold admitting a metric with \(C_1(R)>0.\)
For \(p=2\), corresponding to dimensions $n$ between $48$ and $68$ as well as dimension $n=76$, one checks with \eqref{eq:CpR-crosses} that the Fubini--Study metric on \(\C P^{32},\C P^{34},\) and  \(\C P^{38},\) along with product metrics on \(\C P^{2}\times\C P^{32},\) \(\C P^{2}\times\C P^{36},\) \(\C P^4\times\C P^{34}\), \(\C P^2\times\C P^2\times\C P^{34}\), and \(\mathds{H}P^2\times\C P^{34}\) 
satisfy \(C_p(R)>0\). In the case of products, an appropriate scaling on certain factors is required. Those examples show that \(C_2(R)>0\) is not so stringent as to imply rational null-cobordism for a general closed oriented manifold. 
 
\providecommand{\bysame}{\leavevmode\hbox to3em{\hrulefill}\thinspace}
\providecommand{\MR}{\relax\ifhmode\unskip\space\fi MR }
\providecommand{\MRhref}[2]{%
  \href{http://www.ams.org/mathscinet-getitem?mr=#1}{#2}
}
\providecommand{\href}[2]{#2}

\end{document}